\newtheorem{theorem}{Theorem}[section]         % alternative set of theorem numbering
\newtheorem{lemma}[theorem]{Lemma}
\newtheorem{proposition}[theorem]{Proposition}
\theoremstyle{definition}
\newtheorem{definition}[theorem]{Definition}
\newtheorem{example}[theorem]{Example}
\newtheorem{remark}[theorem]{Remark}
\numberwithin{equation}{section}
\newcommand{\Skindef}{[\,\cdot,\cdot\,]}
\newcommand{\re}{\mathcal{R}}
\newcommand{\cA}{\mathcal A}
\newcommand{\cD}{\mathcal D}
\newcommand{\cH}{\mathcal H}
\newcommand{\cL}{\mathcal L}
\newcommand{\CC}{\mathbb C}
\newcommand{\NN}{\mathbb N}
\newcommand{\RR}{\mathbb R}
\newcommand{\fra}{\mathfrak{a}}
\newcommand{\frd}{\mathfrak{d}}
\newcommand{\frt}{\mathfrak{t}}
\newcommand{\defeq}{\mathrel{\mathop:}=}
\newcommand{\defequ}{\mathrel{\mathop:}\hspace*{-0.5ex}&=}
\newcommand{\eqdef}{=\mathrel{\mathop:}}
\newcommand{\sess}{\sigma_{\rm ess}}
\DeclareMathOperator{\Real}{Re}
\DeclareMathOperator{\Imag}{Im}
\renewcommand{\Re}{\Real}
\renewcommand{\Im}{\Imag}
\DeclareMathOperator{\ran}{ran}
\newcommand{\dom}{\mathcal{D}}
\newcommand{\hhalf}{H_{\frac{1}{2}}}
\newcommand{\half}{\frac{1}{2}}
\newcommand{\mphhalf}{H_{-\frac{1}{2}}\times H_{\frac{1}{2}}}
\newcommand{\eps}{\varepsilon}
\newcommand{\wh}{\widehat}
\newcommand{\wt}{\widetilde}
\newcommand{\rd}{\mathrm{d}}
\newcommand{\dmin}{d_{\rm min}}
\newcommand{\dmax}{d_{\rule{0ex}{1.2ex}\rm max}}
\newcommand{\frdmin}{\mathfrak{d}_{\rm min}}
\newcommand{\frdmax}{\mathfrak{d}_{\rm max}}
\newcommand{\frtmin}{\mathfrak{t}_{\rm min}}
\newcommand{\frtmax}{\mathfrak{t}_{\rm max}}
\newcommand{\Tmin}{T_{\rm min}}
\newcommand{\Tmax}{T_{\rm max}}
\newcommand{\pmin}{p^{({\rm min})}}
\newcommand{\pmax}{p^{({\rm max})}}
\newcommand{\Nmin}{N_{\rm min}}
\newcommand{\Nmax}{N_{\rm max}}
\newcommand{\lambdamin}{\lambda^{({\rm min})}}
\newcommand{\lambdamax}{\lambda^{({\rm max})}}
\newcommand{\vect}[2]{\begin{pmatrix} #1 \\ #2 \end{pmatrix}}
\newcommand{\smvect}[2]{\bigl(\begin{smallmatrix} #1 \\ #2 \end{smallmatrix}\bigr)}
\newcommand{\be}{\begin{equation}}
\newcommand{\ee}{\end{equation}}
\newcommand{\bea}{\begin{eqnarray}}
\newcommand{\eea}{\end{eqnarray}}
\newcommand{\beann}{\begin{eqnarray*}}
\newcommand{\eeann}{\end{eqnarray*}}
\renewcommand{\re}{{\rm Re}}
\newcommand\void[1]{}
\newcommand{\myassumption}[2]{
\begin{list}{}{\setlength{\leftmargin}{7ex}
\setlength{\topsep}{1ex}
\setlength{\labelwidth}{7ex}
\setlength{\labelsep}{2ex}
}
\item[{\rm #1}]
#2
\end{list}
}
\newcommand{\myassumptionbf}[2]{
\begin{list}{}{\setlength{\leftmargin}{8ex}
\setlength{\labelwidth}{8ex}
\setlength{\labelsep}{2ex}
}
\item[\textbf{(#1)}]
#2
\end{list}
}
\title{Variational principles
for self-adjoint operator functions arising from second-order systems}
\author{Birgit Jacob\thanks{Fachbereich C -- Mathematik und Naturwissenschaften,
Arbeitsgruppe Funktionalanalysis,
Bergische Universit\"at  Wuppertal,
Gau\ss stra\ss e 20,
D-42119 Wuppertal, Germany, {bjacob@uni-wuppertal.de}}, \and Matthias Langer\thanks{Department of Mathematics and Statistics,
University of Strathclyde,
26 Richmond Street,
Glasgow  G1 1XH, United Kingdom, {m.langer@strath.ac.uk}}
 \and Carsten Trunk\thanks{Institut f\"ur Mathematik,
Technische Universit\"at Ilmenau,
Postfach 100565,
 D-98684 Ilmenau, Germany,
{carsten.trunk@tu-ilmenau.de}}}
\date{}                               % Please, write the date of submission
\begin{document}
\maketitle

\begin{abstract}
Variational principles are proved for self-adjoint operator functions arising from   variational evolution equations of the form
\begin{equation*}
  \langle\ddot{z}(t),y \rangle + \frd[\dot{z} (t), y] + \fra_0 [z(t),y] = 0.
\end{equation*}
Here $\fra_0$ and $\frd$ are  densely defined, symmetric and positive sesquilinear forms on a Hilbert space $H$.
We associate with the variational evolution equation an equivalent Cauchy problem corresponding to a block operator matrix $\mathcal{A}$, the forms
\begin{equation*}
  \frt(\lambda)[x,y] \defeq \lambda^2\langle x,y\rangle + \lambda\frd[x,y] + \fra_0[x,y],
\end{equation*}
where $\lambda\in \mathbb C$ and $x,y$ are in the domain of the form $\fra_0$, and a corresponding operator family $T(\lambda)$.
Using form methods we define a generalized Rayleigh functional  and characterize the eigenvalues
above the essential spectrum of $\mathcal{A}$  by a min-max
and a max-min variational principle.
The obtained results are illustrated with a damped beam equation.
%
%a Cauchy problem
%given by a second-order linear differential operator equation of the form
%\ctb{Wunsch vom Referee: $H_{\frac{1}{2}}$ und $H_{-\frac{1}{2}}$ muessen erklaert werden.
%Auch sollte das hier schon gleich mit Formen gemacht werden wie in Sektion 4.}
%\[
%  \ddot{z}(t)+D\dot{z}(t) + A_0 z(t) = 0
%\]
%in a Hilbert space $H$.  Here $A_0$ is a uniformly positive operator
%and $A^{-1/2}_0 D A^{-1/2}_0$ is a bounded non-negative operator in $H$.
%As usual, a scale of spaces $H_s$ is related to $A_0$ and the
%inner product $\langle \cdot, \cdot \rangle$ in $H$ extends to a mapping
%$\langle \cdot , \cdot \rangle_{H_{-\frac{1}{2}}\times H_{\frac{1}{2}}}:
%H_{-\frac{1}{2}}\times H_{\frac{1}{2}}\to \mathbb C$.
%We associate with the second-order equation the form
%\[
%  \frt(\lambda)[x,y] = \langle \lambda^2 x+ \lambda Dx + A_0x,
%  y\rangle_{H_{-\frac{1}{2}}\times H_{\frac{1}{2}}}
%\]
%with  domain $H_{\frac{1}{2}}$ and a corresponding operator family $T(\lambda)$.
%Using form methods a generalized Rayleigh functional is defined and the eigenvalues
%above the essential spectrum of the generator associated with
%the Cauchy problem are characterized by a min-max
%and a max-min variational principle.
%The obtained results are illustrated with a damped beam equation.
%
%
\\[1ex]
\textit{Keywords:} block operator matrices; variational principle; operator function;
second-order equations; spectrum;
essential spectrum; sectorial form
\\[1ex]
\textit{Mathematics Subject Classification:} 47A56, 49R05, 47A10
\end{abstract}

%%%%% END OF TITLE PAGE %%%%%%%%%%%%%%%%%%%%%%%%%%%%%%%%%%%%%%%%%%%%%%

%%%%% BODY OF THE PAPER %%%%%%%%%%%%%%%%%%%%%%%%%%%%%%%%%%%%%%%%%%%%%%
% You should eventually delete (after reading) the rest of the text below %%

\section{Introduction}

Variational principles are a very useful tool for the qualitative and numerical investigation
of eigenvalues of self-adjoint operators and operator functions.
For instance, the eigenvalues
$\lambda_1 \ \leq \lambda_2 \leq  \ldots $
below the essential spectrum of a self-adjoint operator $A$ that is bounded
from below and has domain $\dom(A)$ can be characterized using the Rayleigh functional
\begin{equation*}
  p(x) = \frac{\langle Ax,x \rangle}{\langle x,x \rangle}\,,
  \qquad x\in \dom(A), \; x\ne 0,
\end{equation*}
 via a min-max principle or a max-min principle:
 \begin{equation*}
  %\begin{align}
  \lambda_n = \min_{\substack{L\subset\dom(A) \\ \dim L =n}}
  \;\;\max_{x\in L\setminus\{0\}}\;\; p(x)
  = \max_{\substack{L\subset H \\ \dim L = n-1}}
  \;\; \min_{\substack{x\in\dom(A) \setminus\{0\} \\ x \perp L}}
  \; p(x).
%\end{align}
\end{equation*}
Variational principles were first introduced by H.~Weber, Lord Rayleigh, H.~Poincar\'{e},
E.~Fischer, G.~Polya, and W.~Ritz, H.~Weyl, R.~Courant
(see, e.g.\ \cite{BEL00,D55,wein}, and the references therein).

In this article we investigate variational principles for self-adjoint operator functions
arising from   variational evolution equations of the form
\begin{equation}\label{vareqn}
  \langle\ddot{z}(t),y \rangle + \frd[\dot{z} (t), y] + \fra_0 [z(t),y] = 0.
\end{equation}
Here $\fra_0$ with domain $\cD(\fra_0)$ and $\frd$ with domain $\cD(\frd) \supset \cD(\fra_0)$ are  densely defined, symmetric and posivite sesquilinear forms on a Hilbert space $H$ satisfying
\textbf{(F1)}--\textbf{(F3)}, see Section~\ref{sec:framework}.
With  this variational evolution equation we associate  a Cauchy problem
\begin{equation}\label{Cauchy}
  \vect{\dot{z}}{\dot{w}}= \cA \vect{z}{w}, \qquad
  \vect{z(0)}{w(0)} = \vect{z_0}{w_0}
\end{equation}
on $\cD(\fra_0)\times H$ in such a way that the solutions of \eqref{vareqn} equal the first component of the solutions of \eqref{Cauchy}.
For $\lambda\in \mathbb C$ we define  the sesquilinear form
\begin{equation}\label{form}
  \frt(\lambda)[x,y] \defeq \lambda^2\langle x,y\rangle + \lambda\frd[x,y] + \fra_0[x,y]
\end{equation}
with domain $\cD(\frt(\lambda)): = H_{\frac{1}{2}}:= \cD(\fra_0)$.
We identify a disc $\Phi_{\gamma_0}\subset\CC$
which is the largest disc around zero with an empty intersection
with the essential spectrum of $\cA$.
For $\lambda \in \Phi_{\gamma_0}$ we show that the form
$\frt(\lambda)$ is closed and sectorial and that the corresponding
operator $T(\lambda)$ is $m$-sectorial. Moreover, on $\Phi_{\gamma_0}$
the spectrum (point spectrum) of $\cA$ and the spectrum (resp.\ point spectrum) of $T$ coincide.

In \cite{D55} R.\,J.~Duffin proved a variational principle for eigenvalues
of a quadratic matrix polynomial, which was generalized in various directions
to more general operator functions; see, e.g.\ the references in \cite{EL} and \cite{voss15}.
In \cite{EL} such a variational principle was proved for eigenvalues
of operator functions whose values are possibly unbounded self-adjoint operators.
Here we adapt this variational principle from \cite{EL} to our situation.
Using the form $\frt(\lambda)$ we introduce a slightly
more general definition of a generalized Rayleigh functional and we show that
the variational principle generalizes to this situation.
In particular, for a fixed  $x\in H_{\frac{1}{2}}\setminus \{0\}$,
denote the two real solutions (if they exist) of the quadratic equation
\begin{equation*}
  \frt(\lambda) [x,x] =0
\end{equation*}
by $p_-(x)$ and $p_+(x)$ such that $p_-(x)\leq p_+(x)$ is satisfied
and set $p_+(x)\defeq-\infty$, $p_-(x)\defeq\infty$ if there are no real solutions.
Then the function $p_+$ plays the role of a generalized Rayleigh functional in
%The advantage of this more general  generalized Rayleigh functional is that it
%remains a generalized Rayleigh functional if we enlarge the interval.
our main theorem, which yields variational principles for the real eigenvalues of
$\cA$ or, what is equivalent, of $T$. These variational principles
hold in certain real intervals
$\Delta$ above the essential spectrum of $\cA$ in the disc $\Phi_{\gamma_0}$
with the property that $\Delta$ does not contain values of $p_-$.
In $\Delta$ the spectrum of
$\cA$ is either empty or consists only
of a finite or infinite sequence of isolated semi-simple eigenvalues of finite multiplicity
of $\cA$.  Moreover, we show that these eigenvalues $\lambda_1 \ge \lambda_2 \ge \cdots$,
counted according to their multiplicities, satisfy
\begin{equation*}
  \lambda_n = \max_{\substack{L\subset H_{1/2} \\ \dim L = n}}
  \;\; \min_{x\in L\setminus\{0\}}
  \;\; p_+(x)
  =  \min_{\substack{L\subset H \\ \dim L = n-1}}
  \;\; \sup_{\substack{x\in H_{1/2}\setminus\{0\} \\ x \perp L}}
  \; p_+(x)
\end{equation*}
and, if $N<\infty$, we show for $n>N$ that
\begin{equation*}
    \sup_{\substack{L\subset\cD \\ \dim L = n}}
    \;\;\min_{x\in L\setminus\{0\}}\;\; p_+(x)
    \le \inf\Delta \qquad \text{and} \qquad
    \inf_{\substack{L\subset H \\ \dim L = n-1}}
    \;\; \sup_{\substack{x\in\cD\setminus\{0\} \\ x \perp L}}\; p_+(x)
    \le \inf\Delta.
\end{equation*}
A major application of this variational principle is a quite
general interlacing principle  which is the second
main result of this article:
if the stiffness operator $A_0$ decreases and the damping operator
$D$ increases, then the corresponding $n$th eigenvalue
decreases compared with the $n$th eigenvalue
of the unchanged system.
We illustrate the obtained results with an example where we consider
a beam equation with a damping such that $A_0$ corresponds to the fourth
derivative on the interval $(0,1)$
(with some appropriate boundary conditions) and
the damping $D$ equals $-\frac{\rd}{\rd x} d \frac{\rd}{\rd x}$ with
some smooth function $d$ (and some boundary conditions).

We proceed as follows.  The variational principle
obtained in \cite{EL} is adapted to the setting of this paper in Section~\ref{sec:var}.
Section~\ref{sec:framework} is devoted to
general properties of the class of second-order systems studied in this paper.
The main results of this paper are proved in Section~\ref{sec:poly}.
In particular, we study the form \eqref{form} and their relation to
the operator matrix $\cA$ and the operator function $T(\lambda)$.
On a disc $\Phi_{\gamma_0}$ around zero, $\frt(\lambda)$ is a
closed sectorial form and the spectrum (point spectrum) of $\cA$ and
the spectrum (point spectrum) of $T$ coincide.
Further, the variational principles for $\cA$ are presented in
Theorem~\ref{theo100}. As an application of the variational principle
we show interlacing properties of eigenvalues of two different second-order
problems with coefficients which satisfy a specific order relation.
Finally, in Section~\ref{sec:example} we apply the obtained results to a damped beam equation.

Throughout this  paper we use the following notation.
For a self-adjoint operator $S$ and an interval $I$ we denote by $\cL_I(S)$
the spectral subspace of $S$ corresponding to $I$.
A closed, densely defined  operator in $H$ is called \emph{Fredholm} if the
dimension of its kernel
and the (algebraic) co-dimension of its range are finite.
The \emph{essential spectrum} of a closed, densely defined
 operator $S$ is defined by
\begin{displaymath}
  \sess(S) \defeq \bigl\{\lambda \in {\mathbb C}\; |\; S-\lambda I \;\;
  \text{is not Fredholm}\bigr\}.
\end{displaymath}
A closed, densely defined operator $T$ is called
\emph{sectorial} if its numerical range is contained in a sector
$\{ z\in \mathbb C \mid \Re z\ge z_0, \;  |\arg (z -z_0)| \leq \theta\}$
for some $z_0 \in \mathbb R$ and $\theta \in [0,\frac{\pi}{2})$.
A sectorial operator $T$ is called  \emph{m-sectorial} if $\lambda
\in \rho(T)$ for some $\lambda$ with $\Re \lambda < z_0$;
see, e.g.\ \cite[\S V.3.10]{K}.
For a sesquilinear form $\fra[\,\cdot\,,\cdot\,]$ with domain $\cD(\fra)$
the corresponding quadratic form is defined by $\fra[x]\defeq\fra[x,x]$, $x\in\cD(\fra)$.
A form is called \emph{sectorial} if its numerical range is contained in a sector
$\{ z\in \mathbb C \mid \Re z\geq z_0, \;  |\arg (z -z_0)| \leq \theta\}$
for some $z_0 \in \mathbb R$ and $\theta \in [0,\frac{\pi}{2})$;
see, e.g.\ \cite[\S V.3.10]{K}.

\section{A general variational
principle for self-adjoint operator functions}
\label{sec:var}
% *******************************************************************

In this section we recall a general variational principle for eigenvalues
of a self-adjoint operator function from \cite{EL}  adapted to the present situation.
Here we also show  some additional statements.  We mention that
in \cite{EL} a more general class of operator functions was investigated.
%
%\ctb{Sollen wir den folgenden Absatz in die Introduction verschieben?}
%
%
%
%\medskip

For the rest of this section let $\Delta\subset\RR$ be an interval with
\begin{equation}\label{Hitchin}
  a=\inf\Delta \quad\text{and}\quad b=\sup\Delta, \qquad
  -\infty \leq a <b\leq \infty,
\end{equation}
and let $\Omega$ be a domain in $\mathbb C$ such that
$\Delta\subset\Omega$.  On $\Omega$ we consider a family of closed, densely defined
operators $T(\lambda)$, $\lambda\in\Omega$, in a Hilbert space $H$
with inner product $\langle\,\cdot\,,\cdot\rangle$,
where $T(\lambda)$ has domain $\cD(T(\lambda))$.
In the following we shall assume that either $T(\lambda)$ or $-T(\lambda)$
is an m-sectorial operator for $\lambda\in\Omega$.
Under this assumption the sesquilinear form $\langle T(\lambda)\,\cdot\,,\cdot\rangle$
is closable for $\lambda\in\Omega$, and we denote the closure
by $\frt(\lambda)[\,\cdot\,,\cdot\,]$
with domain $\cD(\frt(\lambda))$ and set $\frt(\lambda)[x]\defeq\frt(\lambda)[x,x]$,
which is the corresponding quadratic form.
Recall (see, e.g.\ \cite[\S VII.4]{K}) that $T\defeq(T(\lambda))_{\lambda\in\Omega}$
is called a \emph{holomorphic family of type}~(B)
if $T(\lambda)$ is m-sectorial for $\lambda\in\Omega$, the domain
$\cD(\frt(\lambda))$ of the closed quadratic form $\frt(\lambda)$ is independent of $\lambda$,
which we denote by $\cD$, and $\lambda\mapsto\frt(\lambda)[x]$ is
holomorphic on $\Omega$ for every $x\in\cD$.

We suppose that one of the following two conditions is satisfied.

\myassumption{(I)}{
Let $\Omega$ be a domain in $\CC$ and $\Delta\subset\Omega\cap\RR$ an interval with
endpoints $a$, $b$ as in \eqref{Hitchin}.
The family $(T(\lambda))_{\lambda\in\Omega}$ is a holomorphic family of type (B),
$T(\lambda)$ is self-adjoint for $\lambda\in\Delta$ and there exists a $c\in\Delta$
such that $\dim\cL_{(-\infty,0)}(T(c))<\infty$.
%where $\cL_{(-\infty,0)}(T(c))$ denotes
%the spectral subspace of the self-adjoint operator
%$T(c)$ corresponding to the interval $(-\infty,0)$.
}

\myassumption{(II)}{
Let $\Omega$ be a domain in $\CC$ and $\Delta\subset\Omega\cap\RR$ an interval
with endpoints $a$, $b$ as in \eqref{Hitchin}.
The family $(-T(\lambda))_{\lambda\in\Omega}$ is a holomorphic family of type (B),
$T(\lambda)$ is self-adjoint for $\lambda\in\Delta$ and there exists a $c\in\Delta$
such that $\dim\cL_{(0,\infty)}(T(c))<\infty$.
%where $\cL_{(0,\infty)}(T(c))$ denotes
%the spectral subspace of the self-adjoint operator
%$T(c)$ corresponding to the interval $(0,\infty)$.
}

\noindent
Note that  under assumption (I) for $\lambda\in\Delta$ the
operators $T(\lambda)$ are self-adjoint and sectorial, and, hence,
bounded from below. Similarly, under assumption (II), the operators $T(\lambda)$
are bounded from above for $\lambda\in\Delta$.
The condition $\dim\cL_{(-\infty,0)}(T(c))<\infty$ is equivalent to the fact that
$\sigma(T(c))\cap(-\infty,0)$ consists of at most a finite number of eigenvalues of
finite multiplicities.

Before we formulate the second set of assumptions, let us recall the
following definitions.
The \emph{spectrum} of the operator function $T$ is defined as follows:
\begin{align*}
  \sigma(T) \defequ \bigl\{\lambda\in\Omega \mid T(\lambda)
  \text{ is not bijective from}\;\cD(T(\lambda))\;\text{onto}\; H \bigr\} \\[0.5ex]
  &= \bigl\{\lambda\in\Omega \mid 0\in\sigma(T(\lambda))\bigr\}.
\end{align*}
Similarly, the \emph{essential spectrum} of the operator function $T$ is defined as
\[
  \sess(T) \defeq \bigl\{\lambda\in\Omega \mid T(\lambda) \text{ is not Fredholm}\bigr\}
  = \bigl\{\lambda\in\Omega \mid 0\in\sess(T(\lambda))\bigr\}.
\]
A number $\lambda\in\Omega$ is called an \emph{eigenvalue} of the operator function $T$
if there exists an $x\in\cD(T(\lambda))$, $x\ne0$, such that $T(\lambda)x=0$.
The \emph{point spectrum} is the set of all eigenvalues:
\begin{align*}
  \sigma_{\rm p}(T) \defequ \bigl\{\lambda\in\Omega \mid
  \exists\,x\in\cD(T(\lambda)),\,x\ne0,\,T(\lambda)x=0\bigr\} \\[1ex]
  &= \bigl\{\lambda\in\Omega \mid 0\in\sigma_{\rm p}(T(\lambda))\bigr\},
\end{align*}
where $\sigma_{\rm p}(T(\lambda))$ denotes the point spectrum of the operator
$T(\lambda)$ for fixed $\lambda\in\Omega$.
The \emph{geometric multiplicity} of an eigenvalue $\lambda$ of the operator
function $T$ is defined as the dimension of $\ker T(\lambda)$.

In addition to (I) or (II) we shall assume that one of the following
two conditions $(\searrow)$, $(\nearrow)$ is satisfied.
%We use the notation $\mu<\Delta$ for the fact that $\mu<\lambda$ for all
%$\lambda\in\Delta$,
%and similarly $\mu>\Delta$, where $\mu$ is allowed to be $+\infty$ or
%$-\infty$.

\myassumption{$(\searrow)$}{
For every $x\in\cD\setminus\{0\}$ the function $\lambda\mapsto\frt(\lambda)[x]$ is
\emph{decreasing at value zero} on $\Delta$, i.e.\ if $\frt(\lambda_0)[x]=0$ for some
$\lambda_0\in\Delta$, then
\begin{align*}
  &\frt(\lambda)[x]>0 \qquad\text{for } \lambda\in(-\infty,\lambda_0)\cap\Delta, \\[0.5ex]
  &\frt(\lambda)[x]<0 \qquad\text{for } \lambda\in(\lambda_0,\infty)\cap\Delta.
\end{align*}
}

\myassumption{$(\nearrow)$}{
For every $x\in\cD\setminus\{0\}$ the function $\lambda\mapsto\frt(\lambda)[x]$ is
\emph{increasing at value zero} on $\Delta$, i.e.\ if $\frt(\lambda_0)[x]=0$ for some
$\lambda_0\in\Delta$, then
\begin{align*}
  &\frt(\lambda)[x]<0 \qquad\text{for } \lambda\in(-\infty,\lambda_0)\cap\Delta, \\[0.5ex]
  &\frt(\lambda)[x]>0 \qquad\text{for } \lambda\in(\lambda_0,\infty)\cap\Delta.
\end{align*}
}

If $T$ satisfies $(\nearrow)$ or $(\searrow)$, then,
for $x\in\cD\setminus\{0\}$,
the scalar function $\lambda\mapsto\frt(\lambda)[x]$ is either decreasing
or increasing at a zero and, hence, it has at most one zero in $\Delta$.
%since by (I) or (II) it is continuous on $\Delta$.

We now introduce the notion of a generalized Rayleigh functional $p$,
which is a mapping from $\cD\setminus\{0\}$ to $\RR\cup\{\pm\infty\}$.
If there is a zero $\lambda_0$ of the scalar
function $\lambda \mapsto \frt(\lambda)[x]$ in $\Delta$, then the corresponding value
of a generalized Rayleigh functional $p(x)$ must equal this zero; $p(x)=\lambda_0$.
Otherwise, there is some freedom in the definition.
More precisely, we use the following definition.

\begin{definition}\label{StJohn}
Let $\Delta$ and $\Omega$ be as above.  Moreover, let $T(\lambda)$, $\lambda\in\Omega$,
be a family of closed operators in a Hilbert space $H$ satisfying either
{\rm (I)} or {\rm(II)} and which satisfies also $(\nearrow)$  or $(\searrow)$.
In the case  $(\searrow)$ a mapping $p:\cD\setminus\{0\} \to \RR\cup\{\pm\infty\}$
with the properties
\begin{equation*}
  p(x) \begin{cases}
    = \lambda_0 & \text{if } \frt(\lambda_0)[x]=0, \\[1ex]
    < a & \text{if } a\in\Delta \text{ and } \frt(\lambda)[x]<0
      \text{ for all }\lambda\in\Delta, \\[1ex]
    \le a & \text{if } a\notin\Delta \text{ and } \frt(\lambda)[x]<0
      \text{ for all }\lambda\in\Delta, \\[1ex]
    > b & \text{if } b\in\Delta \text{ and } \frt(\lambda)[x]>0
      \text{ for all }\lambda\in\Delta , \\[1ex]
    \ge b & \text{if } b\notin\Delta \text{ and } \frt(\lambda)[x]>0
      \text{ for all }\lambda\in\Delta.
  \end{cases}
\end{equation*}
is called a \emph{generalized Rayleigh functional} for $T$ on $\Delta$.
In the case  $(\nearrow)$ a mapping $p:\cD\setminus\{0\} \to\RR\cup\{\pm\infty\}$
with the properties
\begin{equation}\label{def_p_incr}
  p(x) \begin{cases}
    = \lambda_0 & \text{if } \frt(\lambda_0)[x]=0, \\[1ex]
    > b & \text{if } b\in \Delta \text{ and } \frt(\lambda)[x]<0
      \text{ for all }\lambda\in\Delta, \\[1ex]
    \ge b & \text{if } b\notin \Delta \text{ and } \frt(\lambda)[x]<0
      \text{ for all }\lambda\in\Delta, \\[1ex]
    < a & \text{if } a\in \Delta \text{ and } \frt(\lambda)[x]>0
      \text{ for all }\lambda\in\Delta, \\[1ex]
    \le a & \text{if } a\notin \Delta \text{ and } \frt(\lambda)[x]>0
      \text{ for all }\lambda\in\Delta.
  \end{cases}
\end{equation}
is called a \emph{generalized Rayleigh functional} for $T$ on $\Delta$.
\end{definition}

\begin{remark}\label{Welshpool}
One possible choice for $p$ in the case $(\searrow)$ is the following (see \cite{BEL00,EL}).
For $x\in\cD\setminus\{0\}$ set
\[
  p(x) = \begin{cases}
    \lambda_0 & \text{if } \frt(\lambda_0)[x]=0, \\[1ex]
    -\infty & \text{if } \frt(\lambda)[x]<0 \text{ for all }\lambda\in\Delta, \\[1ex]
    +\infty & \text{if } \frt(\lambda)[x]>0 \text{ for all }\lambda\in\Delta,
  \end{cases}
\]
which was used as a definition of a generalized Rayleigh functional in \cite{BEL00,EL}.
However, here we propose to use the Definition
\ref{StJohn}. This  has the following advantage: if $p$ is a
generalized Rayleigh functional for
$T$ on $\Delta$, then  the same  $p$ remains a
generalized Rayleigh functional in the sense of Definition \ref{StJohn}
for $T$ on a smaller interval $\Delta'$ with
$\Delta'\subset \Delta$.
Moreover, in many applications, including the one in Section~\ref{sec:poly},
the operator function $T$ is defined on a larger interval $\tilde\Delta\supset\Delta$
but satisfies, say, $(\searrow)$ only on $\Delta$.
If $\frt(\cdot)[x]$ has a zero $\lambda_0$ in $\tilde\Delta$ where
$\lambda_0<a$ and $\frt(\lambda)[x]<0$ for all $\lambda\in\Delta$,
one can set $p(x)\defeq\lambda_0$.
\end{remark}

\begin{example}
We consider two examples to illustrate the notion of a generalized Rayleigh functional.
\begin{enumerate}
\item[(i)]
Let $A$ be a bounded self-adjoint operator in a Hilbert space $H$
and consider the operator function $T(\lambda)=A-\lambda I$, $\lambda\in\Omega=\CC$.
The corresponding quadratic forms are $\frt(\lambda)[x]=\langle Ax,x\rangle-\lambda\|x\|^2$,
$x\in\cD=H$.  If we take $\Delta=\RR$, then $T$ satisfies condition (I),
where one can choose any $c<\min\sigma(A)$; it also satisfies (II), where one can
choose any $c>\max\sigma(A)$.  Moreover, the function $T$ satisfies condition $(\searrow)$
since $\frt'(\lambda)[x]=-\|x\|^2$.  For each $x\in H\setminus\{0\}$ the
function $\frt(\cdot)[x]$ has the unique zero
\[
  p(x) = \frac{\langle Ax,x\rangle}{\|x\|^2}\,;
\]
hence the classical Rayleigh quotient is a generalized Rayleigh functional in
the sense of Definition~\ref{StJohn}.
% ----------
\item[(ii)]
In $H=\CC^2$ consider the quadratic operator function
\[
  T(\lambda) = \begin{bmatrix} \lambda^2-2\lambda+1 & -2 \\[0.5ex]
  -2 & \lambda^2+1 \end{bmatrix}, \qquad \lambda\in\Omega\defeq\CC,
\]
and choose $\Delta\defeq(-\infty,0)$.
Clearly, conditions (I) and (II) are satisfied.
For $x=\binom{x_1}{x_2}\in\CC^2$ one has
\[
  \frt(\lambda)[x] = \langle T(\lambda)x,x\rangle
  = \|x\|^2\lambda^2 - 2|x_1|^2\lambda + \|x\|^2-4\Re(x_1\overline{x_2}).
\]
Since the coefficient of $\lambda$ is non-positive, the sum of the
two zeros of the polynomial $\frt(\cdot)[x]$ is non-negative if $x\ne0$,
and therefore at most one zero can be in $\Delta$.
At any such zero the function
must be decreasing, which shows that condition $(\searrow)$ is satisfied.
Moreover, $\frt(\cdot)[x]$ is positive on $\Delta$ if it has no negative zero.
Hence a possible choice for a generalized Rayleigh functional is given by
\[
  p(x) = \begin{cases}
    \displaystyle \frac{|x_1|^2-\sqrt{|x_1|^4-\|x\|^2+4\Re(x_1\overline{x_2})}\,}{\|x\|^2}
    & \text{if } |x_1|^4-\|x\|^2+4\Re(x_1\overline{x_2})\ge0, \\[1ex]
    \infty & \text{otherwise}.
  \end{cases}
\]
Note that three cases occur: (a) $\frt(\cdot)[x]$ has a positive and a negative
zero, in which case $p(x)$ equals the negative zero;
(b) $\frt(\cdot)[x]$ has two positive zeros, in which case $p(x)>0=\sup\Delta$;
(c) $\frt(\cdot)[x]$ has no real zeros, in which case $p(x)=\infty$.
Examples for these three cases are given by the vectors $\binom{1}{1}$,
$\binom{2}{-1}$, $\binom{1}{-1}$, respectively.
\end{enumerate}
\end{example}

\medskip

\noindent
For a generalized Rayleigh functional $p$ as in
Definition~\ref{StJohn} we have for $\lambda
\in \Delta$, $x\in\cD(T(\lambda))\setminus\{0\}$,
\[
  T(\lambda)x = 0 \quad \implies \quad p(x) =\lambda.
\]
If $T$ satisfies $(\searrow)$, then  for $x\in\cD\setminus\{0\}$
\begin{equation}\label{Trip1}
\begin{alignedat}{3}
  \frt(\lambda)[x] &> 0 \quad & &\iff \quad & p(x) &> \lambda, \\
  \frt(\lambda)[x] &< 0 \quad & &\iff \quad & p(x) &< \lambda;
\end{alignedat}
\end{equation}
if $T$ satisfies $(\nearrow)$, then for $x\in\cD\setminus\{0\}$
\begin{equation}\label{Trip2}
\begin{alignedat}{3}
  \frt(\lambda)[x] &> 0 \quad & &\iff \quad & p(x) &< \lambda, \\
  \frt(\lambda)[x] &< 0 \quad & &\iff \quad & p(x) &> \lambda.
\end{alignedat}
\end{equation}

In \cite[Theorem~2.1]{EL}  a variational principle involving a generalized
Rayleigh functional was derived.  There the generalized Rayleigh functional
was defined as in Remark~\ref{Welshpool} and not in the (slightly more general) way
as in Definition~\ref{StJohn}.  Therefore, the variational principle in the
following theorem is an adapted version of \cite[Theorem~2.1]{EL} where
a non-decreasing sequence of eigenvalues of an operator function is characterized.
Moreover, in \cite[Theorem~2.1]{EL} only the case {\rm (I), ($\searrow$)}
was considered (under slightly weaker assumptions on $\frt$).

% ----------------------------------------------------------
\begin{theorem}\label{th:var_left}
Let $\Delta$ and $\Omega$ be as above.  Moreover, let $T(\lambda)$, $\lambda\in\Omega$,
be a family of closed operators in a Hilbert space $H$ satisfying either
{\rm (I), ($\searrow$)} or {\rm(II), ($\nearrow$)}, let $p$ be a generalized
Rayleigh functional and assume that
\[
  \Delta'\defeq \begin{cases}
  \Delta & \textit{if\, } \sess(T)\cap\Delta=\emptyset, \\[1ex]
  \bigl\{\lambda\in\Delta \mid \lambda<\inf\bigl(\sess(T)\cap\Delta\bigr)\bigr\}
  & \text{if\, } \sess(T)\cap\Delta\ne\emptyset,
  \end{cases}
\]
is non-empty.

Then $\sigma(T)\cap\Delta'$ is either empty or consists only of a finite or infinite
sequence of isolated eigenvalues of $T$ with finite geometric multiplicities, which
in the case of infinitely many eigenvalues in $\sigma(T)\cap\Delta'$ accumulates only
at $\sup\Delta'$ {\rm(}which equals $\inf(\sess(T)\cap\Delta)$
if $\sess(T)\cap\Delta\ne\emptyset$ and equals  $b$ otherwise{\rm)}.

If $\sigma(T)\cap\Delta'$ is empty, then set $N\defeq0$; otherwise, denote the
eigenvalues in $\sigma(T)\cap\Delta'$ by $(\lambda_j)_{j=1}^N$, $N\in\NN\cup\{\infty\}$,
in non-decreasing order, counted according to their geometric multiplicities:
$\lambda_1 \le \lambda_2 \le \cdots$.
Choose $a'\in\Delta'$ so that in the case $N>0$ it satisfies $a' \le \lambda_1$.
Then the quantity
\[
  \kappa \defeq \begin{cases}
    \dim\mathcal L_{(-\infty,0)}\bigl(T(a')\bigr) & \text{if\, {\rm (I), ($\searrow$)} are satisfied}, \\[1ex]
    \dim\mathcal L_{(0,\infty)}\bigl(T(a')\bigr) & \text{if\, {\rm (II), ($\nearrow$)} are satisfied},
  \end{cases}
\]
is a finite number.  Moreover, the $n$th eigenvalue $\lambda_n$, $n\in\NN$, $n\le N$,
satisfies
\begin{align}
\label{minmax1}
  \lambda_n &= \min_{\substack{L\subset\cD \\ \dim L =\kappa+n}}
  \;\;\sup_{x\in L\setminus\{0\}}\;\; p(x), \\[1ex]
\label{minmax2}
  \lambda_n &= \max_{\substack{L\subset H \\ \dim L = \kappa+n-1}}
  \;\; \inf_{\substack{x\in\cD\setminus\{0\} \\ x \perp L}}
  \; p(x).
\end{align}
For  subspaces $L$ with dimensions not considered in
\eqref{minmax1} and \eqref{minmax2} the right-hand side of
\eqref{minmax1} and \eqref{minmax2} gives values with the
following properties: if $\kappa>0$, then
\begin{equation}\label{minmax_n_le_kappa}
  \begin{aligned}
    \inf_{\substack{L\subset\cD \\ \dim L = n}}
    \;\; \sup_{x\in L\setminus\{0\}}\;\; p(x)
    \,&\le\, a \\[1ex]
    \sup_{\substack{L\subset H \\ \dim L = n-1}}
    \;\; \inf_{\substack{x\in\cD\setminus\{0\} \\ x \perp L}}\; p(x)
    \,&\le\, a
  \end{aligned}
  \qquad\text{for}\;n=1,\dots,\kappa;
\end{equation}
if $N<\infty$, then
\begin{equation}\label{minmax_n_g_N}
  \begin{aligned}
    \inf_{\substack{L\subset\cD \\ \dim L = n}}
    \;\;\sup_{x\in L\setminus\{0\}}\;\; p(x)
    \,&\ge\, \sup\Delta' \\[1ex]
    \sup_{\substack{L\subset H \\ \dim L = n-1}}
    \;\; \inf_{\substack{x\in\cD\setminus\{0\} \\ x \perp L}}\; p(x)
    \,&\ge\, \sup\Delta'
  \end{aligned}
  \qquad\text{for}\; n>\kappa+N \;\;\text{with}\; n\le\dim H.
\end{equation}
\end{theorem}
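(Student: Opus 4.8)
The plan is to reduce the case {\rm (II), ($\nearrow$)} to the case {\rm (I), ($\searrow$)} and then to prove the latter by means of a monotone counting function. If $T$ satisfies {\rm (II), ($\nearrow$)}, set $\tilde T(\lambda)\defeq -T(\lambda)$. Then $\tilde T$ is a holomorphic family of type (B), it is self-adjoint on $\Delta$, the identity $\cL_{(-\infty,0)}(\tilde T(c))=\cL_{(0,\infty)}(T(c))$ shows that $\tilde T$ satisfies {\rm (I)}, and since $\tilde\frt(\lambda)[x]=-\frt(\lambda)[x]$ condition ($\nearrow$) for $T$ becomes ($\searrow$) for $\tilde T$. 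Comparing the cases of Definition~\ref{StJohn} one checks that every generalized Rayleigh functional for $T$ is one for $\tilde T$ and conversely, that $\sigma(T)=\sigma(\tilde T)$ with the same eigenvalues $(\lambda_j)$ and geometric multiplicities, and that the numbers $\kappa$ agree. Hence \eqref{minmax1}--\eqref{minmax_n_g_N} for $T$ are literally those for $\tilde T$, and it suffices to treat {\rm (I), ($\searrow$)}.

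So assume {\rm (I), ($\searrow$)}. Since each $T(\lambda)$, $\lambda\in\Delta$, is self-adjoint and bounded below, the classical min-max principle gives, for the counting function $n(\lambda)\defeq\dim\cL_{(-\infty,0)}(T(\lambda))$, the identity $n(\lambda)=\max\{\dim L:L\subset\cD,\ \frt(\lambda)[x]<0\text{ for all }x\in L\setminus\{0\}\}$, which by \eqref{Trip1} equals $\max\{\dim L:L\subset\cD,\ p(x)<\lambda\text{ for all }x\in L\setminus\{0\}\}$. Condition ($\searrow$) makes $n$ non-decreasing on $\Delta$. For $\lambda\in\Delta'$ one has $0\notin\sess(T(\lambda))$, so $T(\lambda)$ is Fredholm and $0$ is either in $\rho(T(\lambda))$ or an isolated eigenvalue of finite multiplicity; combined with the holomorphy of $\lambda\mapsto\frt(\lambda)[x]$ and the strict sign change forced by ($\searrow$), this shows that on $\Delta'$ the function $n$ is a finite, non-decreasing, integer-valued step function whose jump at each point $\lambda_0$ equals $\dim\ker T(\lambda_0)$. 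This already yields the spectral statement (isolated eigenvalues of finite geometric multiplicity, accumulating at most at $\sup\Delta'$), the finiteness of $\kappa$, and its independence of the admissible choice of $a'$, together with the counting relation $n(\lambda)=\kappa+\#\{j:\lambda_j<\lambda\}$ for $\lambda\in\Delta'$ and $\dim\cL_{(-\infty,0]}(T(\lambda_0))=\kappa+\#\{j:\lambda_j\le\lambda_0\}$ at an eigenvalue $\lambda_0$.

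From here the variational principles follow by dimension-counting. For \eqref{minmax1}, the bound ``$\le$'' is obtained by choosing a $(\kappa+n)$-dimensional $L\subset\cL_{(-\infty,0]}(T(\lambda_n))$, on which $\frt(\lambda_n)[x]\le0$ and hence $p(x)\le\lambda_n$; for ``$\ge$'', if some $L$ with $\dim L=\kappa+n$ had $\sup_{x\in L}p(x)<\lambda_n$, one picks $\lambda'\in(\sup_{x\in L}p(x),\lambda_n)\cap\Delta'$, so that $p(x)<\lambda'$ on $L$ gives $n(\lambda')\ge\kappa+n$, contradicting $n(\lambda')\le\kappa+n-1$. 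For \eqref{minmax2} I would use the dual intersection argument: for any $L$ with $\dim L=\kappa+n-1$ the space $\cL_{(-\infty,0]}(T(\lambda_n))$, of dimension $\ge\kappa+n$, meets $L^{\perp}$ nontrivially, producing $x\perp L$ with $p(x)\le\lambda_n$; conversely $L^*\defeq\cL_{(-\infty,0)}(T(\lambda_n))\oplus W$, with $W\subset\ker T(\lambda_n)$ and $\dim W=n-1-\#\{j:\lambda_j<\lambda_n\}$, has dimension $\kappa+n-1$ and satisfies $\frt(\lambda_n)[x]\ge0$, i.e.\ $p(x)\ge\lambda_n$, for all $x\perp L^*$. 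The extra statements \eqref{minmax_n_le_kappa} and \eqref{minmax_n_g_N} come from the same relation at the endpoints: for $n\le\kappa$ a subspace of $\cL_{(-\infty,0)}(T(\lambda))$ with $\lambda\downarrow a$ gives values $\le a$, while for $n>\kappa+N$ the equality $n(\lambda)=\kappa+N<n$ forces, for every $L$ of dimension $n$ and every $\lambda<\sup\Delta'$, some $x\in L$ with $\frt(\lambda)[x]\ge0$, hence $p(x)\ge\lambda$, and letting $\lambda\uparrow\sup\Delta'$ gives the claim. Finally, the freedom left by Definition~\ref{StJohn} concerns only vectors $x$ for which $\frt(\cdot)[x]$ has no zero in $\Delta$, whose $p$-values lie consistently below $a$ or above $b$, so the extremal values in \eqref{minmax1}--\eqref{minmax2} are unchanged; this is exactly the point where the argument extends the version of \cite{EL} built on the functional of Remark~\ref{Welshpool}.

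The main obstacle I expect is the second paragraph, namely proving that the jump of $n$ at an eigenvalue $\lambda_0$ is exactly $\dim\ker T(\lambda_0)$ and that $n$ is finite throughout $\Delta'$. The inequality ``jump $\ge\dim\ker T(\lambda_0)$'' is immediate from ($\searrow$), since each kernel vector has $\frt(\lambda)[x]$ passing from the value $0$ to negative values; but the reverse inequality -- that no further eigenvalues of $T(\lambda)$ cross $0$ -- requires the Fredholm property on $\Delta'$ together with the analyticity of the type-(B) family to exclude additional sign changes in a neighbourhood of $\lambda_0$. The finiteness of $n$ on $\Delta'$ and the independence of $\kappa$ of $a'$ then follow by propagating the value from the point $c$ in {\rm (I)} along $\Delta'$ using the monotonicity of $n$ and the finiteness of each jump.
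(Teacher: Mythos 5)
Your overall architecture is sound and in several places coincides with the paper's: the reduction of the case {\rm(II)}, ($\nearrow$) to {\rm(I)}, ($\searrow$) via $\wt T(\lambda)\defeq -T(\lambda)$ is exactly the paper's final step; your observation that the freedom in Definition~\ref{StJohn} concerns only vectors whose form has no zero in $\Delta$, whose $p$-values then lie below $a$ or above $b$ and cannot move the extrema, is the same idea as the paper's comparison of $p$ with the particular functional $q$ of Remark~\ref{Welshpool}; and your endpoint arguments for \eqref{minmax_n_le_kappa} and \eqref{minmax_n_g_N} are essentially the paper's. The structural difference is that the paper does not prove the core variational principle at all: it verifies the hypotheses of \cite[Theorem~2.1]{EL} (via \cite[Proposition~2.13]{EL}) and imports that theorem together with \cite[Lemmas~2.6 and 2.7]{EL}, whereas you attempt to re-derive it from scratch through the counting function $n(\lambda)=\dim\cL_{(-\infty,0)}(T(\lambda))$.

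That re-derivation is where the genuine gap sits, and you have located it yourself. Everything in your dimension-counting paragraph rests on the relation $n(\lambda)=\kappa+\#\{j \mid \lambda_j<\lambda\}$ on $\Delta'$, which needs (a) that $n$ is finite on all of $\Delta'$, not merely at the point $c$ of hypothesis {\rm(I)} and (by monotonicity) to its left, and (b) that at each eigenvalue $\lambda_0$ the jump of $n$ is exactly $\dim\ker T(\lambda_0)$, so that $n$ is locally constant between eigenvalues and cannot increase ``continuously''. The easy half (jump $\ge\dim\ker T(\lambda_0)$, via ($\searrow$)) you prove; the reverse inequality and the propagation of finiteness are asserted with the phrase that Fredholmness and analyticity ``exclude additional sign changes'', which is not an argument: one must show that no eigenvalue branch of $T(\lambda)$ other than those vanishing at $\lambda_0$ crosses $0$ there, and that the eigenvalues of $T(\lambda)$ cannot accumulate at $0$ from above as $\lambda$ increases through $\Delta'$ (which would destroy the step structure or produce an infinite jump). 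This is precisely the content of \cite[Lemmas~2.6 and 2.7]{EL}; without it your proof of the spectral statement, of the finiteness and $a'$-independence of $\kappa$, and of both inequalities in \eqref{minmax1} and \eqref{minmax2} is incomplete. Either supply that lemma in full (continuity of the isolated eigenvalues of a holomorphic family of type (B), combined with a monotonicity argument at the value $0$), or do as the paper does and cite it.
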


\begin{proof}
Let us first consider the case when (I), ($\searrow$) are satisfied.
We apply \cite[Theorem~2.1]{EL}.  Since $T$ is a holomorphic family of type (B),
\cite[Proposition~2.13]{EL} implies that conditions (i) and (ii) of \cite[Theorem~2.1]{EL}
are satisfied.  It follows directly from (I) and $(\searrow)$ that (iii) and (iv)
of \cite[Theorem~2.1]{EL} are also satisfied.  Now \cite[Theorem~2.1]{EL} implies that
$\sigma(T)\cap\Delta'$ is either empty or consists of a sequence of isolated eigenvalues
that can accumulate at most at $\sup\Delta'$.

Set
\[
  \Delta_1 \defeq \begin{cases}
    \Delta' & \text{if }N=0, \\[1ex]
    \bigl\{\mu\in\Delta'\mid\mu\le\lambda_1\bigr\} & \text{otherwise.}
  \end{cases}
\]
In \cite[Theorem~2.1]{EL} the number $\kappa$ was defined as
$\dim\cL_{(-\infty,0)}\bigl(T(a'')\bigr)$
with a particular choice of $a''\in\Delta_1$.  However, the function
\[
  \lambda \mapsto \dim\cL_{(-\infty,0)}\bigl(T(\lambda)\bigr)
\]
is constant on $\Delta_1$ by \cite[Lemma~2.6]{EL}.  Hence we choose
an arbitrary $a'\in\Delta_1$ for the definition of $\kappa$,
which by \cite[Theorem~2.1 and Lemma 2.6]{EL} is a finite number:
\[
  \kappa = \dim\cL_{(-\infty,0)}\bigl(T(a')\bigr).
\]
Let us now prove \eqref{minmax1}.
In \cite{EL} a special choice of a generalized Rayleigh functional
was considered; see Remark \ref{Welshpool}.
In order to distinguish it, we denote it by $q$, i.e.\
for $x\in\cD\setminus\{0\}$ we set
\[
  q(x) \defeq \begin{cases}
    \lambda_0 & \text{if } \frt(\lambda_0)[x] = 0, \\[0.5ex]
    -\infty & \text{if } \frt(\lambda)[x]<0 \text{ for all } \lambda\in\Delta, \\[0.5ex]
    +\infty & \text{if } \frt(\lambda)[x]>0 \text{ for all } \lambda\in\Delta.
  \end{cases}
\]
If $p(x)\in\Delta$ or $q(x)\in\Delta$ holds
for some  $x\in\cD\setminus\{0\}$, then by the definition of
$p$ and $q$ we have $ \frt(p(x))[x] = 0$ or $ \frt(q(x))[x] = 0$, respectively,
and thus $p(x)=q(x)$ follows.
In \cite[Theorem~2.1]{EL} it was proved that
\[
  \lambda_n = \min_{\substack{L\subset\cD \\ \dim L =\kappa+n}}
  \;\;\max_{x\in L\setminus\{0\}}\;\; q(x)
\]
for $n\in\NN$, $n\le N$.
Let $n\in\NN$ with $n\le N$.
There exists a subspace $L_0\subset\cD$ with $\dim L_0=\kappa+n$ such that
\[
  \max_{x\in L_0\setminus\{0\}} q(x) = \lambda_n,
\]
which implies in particular that $q(x) \le \lambda_n$ for all $x\in L_0\setminus\{0\}$.
If, for $x\in L_0\setminus\{0\}$, we have $q(x)=-\infty$, then $p(x)\leq a$ by the
definitions of $p$ and $q$, and hence $p(x)\le\lambda_n$.  If, for $x\in L_0\setminus\{0\}$,
we have $q(x)\ne-\infty$, then $q(x)\in\Delta$ and hence $p(x)=q(x)\le\lambda_n$.
This implies that
\begin{equation}\label{Shrewsbury}
  \sup_{x\in L_0\setminus\{0\}} p(x) \le \max_{x\in L_0\setminus\{0\}} q(x) = \lambda_n .
\end{equation}
Let $L\subset\cD$ be an arbitrary subspace with $\dim L=\kappa+n$.
Then, by the definition of $L_0$,
\[
  \max_{x\in L\setminus\{0\}} q(x) \ge \max_{x\in L_0\setminus\{0\}} q(x)= \lambda_n .
\]
Hence there exists an $x_0\in L\setminus\{0\}$
with $q(x_0)\ge\lambda_n$.  If $q(x_0)=+\infty$, then $p(x_0)\geq b$ and,
in particular, $p(x_0)\ge\lambda_n$.  If $q(x_0)\ne+\infty$, then $q(x_0)\in\Delta$,
which implies that $p(x_0)=q(x_0)\ge\lambda_n$.  Hence
\begin{equation}\label{ShrewsburyII}
  \sup_{x\in L\setminus\{0\}} p(x)\ge\lambda_n .
\end{equation}
By \eqref{Shrewsbury} and \eqref{ShrewsburyII} we obtain \eqref{minmax1}.
Equation \eqref{minmax2} is shown in a similar way.

Next we prove the first inequality in \eqref{minmax_n_le_kappa}.
Let $n\le\kappa$ and let $\lambda\in\Delta_1$ be arbitrary.
We have seen above that $\dim\cL_{(-\infty,0)}(T(\lambda))=\kappa$.
Therefore we can choose an $n$-dimensional subspace of $\cL_{(-\infty,0)}(T(\lambda))$,
which we denote by $L_0$ and which is contained in $\cD(T(\lambda))\subset\cD$.
Since $\frt(\lambda)[x]<0$ for all $x\in L_0\setminus\{0\}$, we have
\[
  \inf_{\substack{L\subset\cD \\ \dim L=n}} \;\; \sup_{x\in L\setminus\{0\}}\;\; p(x)
  \le \sup_{x\in L_0\setminus\{0\}}\;\; p(x)
  \le \lambda.
\]
This implies the first inequality in \eqref{minmax_n_le_kappa}
since $\lambda\in\Delta_1$ was arbitrary.
The second inequality in \eqref{minmax_n_le_kappa} is shown in a similar way.

We show the first inequality in \eqref{minmax_n_g_N}.
Let $n>\kappa+N$.
If  we have $\lambda_N =b=\sup \Delta'$, then \eqref{minmax_n_g_N} follows from \eqref{minmax1}. In all other cases, choose
$\lambda\in\Delta'$ such that $\lambda>\lambda_N$ if $N>0$.
It follows from \cite[Lemmas~2.6 and 2.7]{EL} that
$\dim\cL_{(-\infty,0)}(T(\lambda))=\kappa+N$.  Hence, for each subspace $L\subset\cD$
with $\dim L=n$, there exists an $x_0\in L\setminus\{0\}$ such that $\frt(\lambda)[x_0]\ge0$.
Therefore
\[
  \sup_{x\in L\setminus\{0\}}\; p(x) \ge p(x_0) \ge \lambda.
\]
Since this is true for every such $L$, we have
\[
  \inf_{\substack{L\subset\cD \\ \dim L=n}} \;\;
  \sup_{x\in L\setminus\{0\}} p(x) \ge \lambda,
\]
which implies the validity of the first inequality in \eqref{minmax_n_g_N}
as $\lambda$ can be chosen arbitrarily close to $\sup\Delta'$;
see \cite[Lemma 2.6]{EL}.
In a similar way one can show the second inequality in \eqref{minmax_n_g_N}.

If instead of (I), ($\searrow$) the assumptions (II), ($\nearrow$) are satisfied, then
the function $\wt T(\lambda)\defeq-T(\lambda)$ satisfies the assumptions (I), ($\searrow$)
and $\wt p(x)\defeq p(x)$  is a generalized Rayleigh functional for $\wt T$ on $\Delta$,
see Definition \ref{StJohn}.
Hence we can apply the already proved statements to $\wt T$, which
imply all assertions also
in this situation  as $\sigma_{\rm p}(\wt T) = \sigma_{\rm p}(T)$.
\end{proof}

\begin{remark}\label{rem_minmax}
\rule{0ex}{1ex}
\begin{enumerate}
\item[(i)]
Instead of assuming that $T$ is a holomorphic family of type {\rm(B)} it is
sufficient to assume some weaker continuity properties.  Also the domain of the quadratic
form may depend on $\lambda$.  For further details see \cite{EL}, in particular,
the assumptions (i) and (ii) there.
\item[(ii)]
If the functional $p$ is chosen such that it is continuous as a mapping from
$\cD$ into the extended real numbers $\RR\cup\{\pm\infty\}$ and $p(cx)=p(x)$
for all $c\in\CC\setminus\{0\}$ and $x\in\cD$,
then the supremum in \eqref{minmax1} is actually a maximum, i.e.\
the eigenvalue $\lambda_n$, $n\in \mathbb N$, $n\leq N$, satisfies
\[
  \lambda_n = \min_{\substack{L\subset\cD \\ \dim L =\kappa+n}}
  \;\;\max_{x\in L\setminus\{0\}}\;\; p(x).
\]
This follows from the fact that it is sufficient to take the supremum over the
set $\{x\in L\mid \|x\|=1\}$, which is compact. The same statement
applies to \eqref{minmax_n_le_kappa}
 and \eqref{minmax_n_g_N}.
\end{enumerate}
\end{remark}

\noindent
A similar theorem holds if we replace in Theorem \ref{th:var_left} the assumption
{\rm (I), ($\searrow$)}  by {\rm (I), ($\nearrow$)} and {\rm(II), ($\nearrow$)}
by {\rm(II), ($\searrow$)}, respectively, and change $\Delta'$ accordingly.
This is done in the following theorem.

% ----------------------------------------------------------
\begin{theorem}\label{th:var_right}
Let $\Delta$ and $\Omega$ be as above.  Moreover, let $T(\lambda)$, $\lambda\in\Omega$,
be a family of closed operators in a Hilbert space $H$ satisfying either
{\rm (I), ($\nearrow$)} or {\rm(II), ($\searrow$)}, let $p$ be a generalized
Rayleigh functional and assume that
\[
  \Delta'\defeq \begin{cases}
  \Delta & \textit{if\, } \sess(T)\cap\Delta=\emptyset, \\[1ex]
  \bigl\{\lambda\in\Delta \mid \lambda>\sup\bigl(\sess(T)\cap\Delta\bigr)\bigr\}
  & \text{if\, } \sess(T)\cap\Delta\ne\emptyset,
  \end{cases}
\]
is non-empty.

Then $\sigma(T)\cap\Delta'$ is either empty or consists only of a finite or infinite
sequence of isolated eigenvalues of $T$ with finite geometric multiplicities, which
in the case of infinitely many eigenvalues in $\sigma(T)\cap\Delta'$ accumulates only
at $\inf\Delta'$ {\rm(}which equals $\sup(\sess(T)\cap\Delta)$
if $\sess(T)\cap\Delta\ne\emptyset$ and equals $a$ otherwise{\rm)}.

If $\sigma(T)\cap\Delta'$ is empty, then set $N\defeq0$; otherwise, denote the
eigenvalues in $\sigma(T)\cap\Delta'$ by $(\lambda_j)_{j=1}^N$, $N\in\NN\cup\{\infty\}$,
in non-increasing order, counted according to their geometric multiplicities:
$\lambda_1 \ge \lambda_2 \ge \cdots$.
Choose $b'\in\Delta'$ so that in the case $N>0$ it satisfies $\lambda_1 \le b'$.
Then the quantity
\[
  \kappa \defeq \begin{cases}
    \dim\mathcal L_{(-\infty,0)}\bigl(T(b')\bigr) & \text{if\, {\rm (I), ($\nearrow$)} are satisfied}, \\[1ex]
    \dim\mathcal L_{(0,\infty)}\bigl(T(b')\bigr) & \text{if\, {\rm (II), ($\searrow$)} are satisfied},
  \end{cases}
\]
is a finite number.  Moreover, the $n$th eigenvalue $\lambda_n$, $n\in\NN$, $n\le N$,
satisfies
\begin{align}
\label{minmax_re1}
  \lambda_n &= \max_{\substack{L\subset\cD \\ \dim L =\kappa+n}}
  \;\;\inf_{x\in L\setminus\{0\}}\;\; p(x), \\[1ex]
\label{minmax_re2}
  \lambda_n &= \min_{\substack{L\subset H \\ \dim L = \kappa+n-1}}
  \;\; \sup_{\substack{x\in\cD\setminus\{0\} \\ x \perp L}}
  \; p(x).
\end{align}
For  subspaces $L$ with dimensions not considered in
\eqref{minmax_re1} and \eqref{minmax_re2}  the right-hand side of
\eqref{minmax_re1} and \eqref{minmax_re2}  gives values with the
following properties: if $\kappa>0$, then
\begin{equation}\label{minmax_re_n_le_kappa}
  \begin{aligned}
    \sup_{\substack{L\subset\cD \\ \dim L = n}}
    \;\; \inf_{x\in L\setminus\{0\}}\;\; p(x)
    \,&\ge\, b \\[1ex]
    \inf_{\substack{L\subset H \\ \dim L = n-1}}
    \;\; \sup_{\substack{x\in\cD\setminus\{0\} \\ x \perp L}}\; p(x)
    \,&\ge\, b
  \end{aligned}
  \qquad\text{for}\;n=1,\dots,\kappa;
\end{equation}
if $N<\infty$, then
\begin{equation}\label{minmax_re_n_g_N}
  \begin{aligned}
    \sup_{\substack{L\subset\cD \\ \dim L = n}}
    \;\;\inf_{x\in L\setminus\{0\}}\;\; p(x)
    \,&\le\, \inf\Delta' \\[1ex]
    \inf_{\substack{L\subset H \\ \dim L = n-1}}
    \;\; \sup_{\substack{x\in\cD\setminus\{0\} \\ x \perp L}}\; p(x)
    \,&\le\, \inf\Delta'
  \end{aligned}
  \qquad\text{for}\; n>\kappa+N \;\;\text{with}\; n\le\dim H.
\end{equation}
\end{theorem}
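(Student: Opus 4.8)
The plan is to reduce everything to Theorem~\ref{th:var_left} by the reflection $\lambda\mapsto-\lambda$. Set $\wh\Omega\defeq\{-\mu:\mu\in\Omega\}$ and $\wh\Delta\defeq\{-\mu:\mu\in\Delta\}$, so that $\inf\wh\Delta=-b$ and $\sup\wh\Delta=-a$, and define the reflected family $\wh T(\mu)\defeq T(-\mu)$ for $\mu\in\wh\Omega$ together with $\wh p(x)\defeq-p(x)$ for $x\in\cD\setminus\{0\}$. Since $\mu\mapsto-\mu$ is holomorphic and leaves the common form domain $\cD$ unchanged, $\wh T$ is again a holomorphic family of type (B) (respectively $-\wh T$ is, in case (II)), and $\wh T(\mu)=T(-\mu)$ is self-adjoint for $\mu\in\wh\Delta$. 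Writing $\wh\frt(\mu)[x]=\frt(-\mu)[x]$, the reflection reverses the sign pattern of $\frt$ around any zero, so property ($\nearrow$) for $\frt$ on $\Delta$ turns into ($\searrow$) for $\wh\frt$ on $\wh\Delta$ and vice versa. Together with $\dim\cL_{(-\infty,0)}(\wh T(-c))=\dim\cL_{(-\infty,0)}(T(c))$ and the analogous identity for $(0,\infty)$, this shows that (I),($\nearrow$) for $T$ becomes (I),($\searrow$) for $\wh T$, and (II),($\searrow$) for $T$ becomes (II),($\nearrow$) for $\wh T$; in either case $\wh T$ satisfies the hypotheses of Theorem~\ref{th:var_left}.

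Next I would confirm, going through the cases of Definition~\ref{StJohn}, that $\wh p=-p$ is a generalized Rayleigh functional for $\wh T$ on $\wh\Delta$: the zero clause $\wh p(x)=\mu_0$ when $\wh\frt(\mu_0)[x]=0$ is immediate (then $\frt(-\mu_0)[x]=0$, whence $p(x)=-\mu_0$), and each boundary inequality for $p$ in the ($\nearrow$)-form \eqref{def_p_incr} maps, after multiplication by $-1$ and the substitutions $a=-\sup\wh\Delta$, $b=-\inf\wh\Delta$, exactly onto the corresponding clause of the ($\searrow$)-form. Because $0\in\sigma(T(\lambda))$ iff $0\in\sigma(\wh T(-\lambda))$, and likewise for point and essential spectra, one has $\sigma(\wh T)=-\sigma(T)$ and $\sess(\wh T)=-\sess(T)$; consequently the interval $\wh\Delta'$ that Theorem~\ref{th:var_left} attaches to $\wh T$ equals $-\Delta'$, with $\sup(\sess(T)\cap\Delta)$ turning into $\inf(\sess(\wh T)\cap\wh\Delta)$. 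In particular $\wh\Delta'$ is non-empty, and the eigenvalues of $\wh T$ in $\wh\Delta'$, listed as $\wh\lambda_1\le\wh\lambda_2\le\cdots$, are precisely $\wh\lambda_j=-\lambda_j$, so the $\lambda_j$ are non-increasing and accumulate only at $-\sup\wh\Delta'=\inf\Delta'$, which matches the stated accumulation point.

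For the constant $\kappa$ I would take $\wh a'\defeq-b'\in\wh\Delta'$, which satisfies $\wh a'\le\wh\lambda_1$ exactly because $\lambda_1\le b'$, and observe $\wh T(\wh a')=T(b')$; hence the $\kappa$ furnished by Theorem~\ref{th:var_left} coincides with the $\kappa$ defined in the present statement. It then remains to apply Theorem~\ref{th:var_left} to $\wh T,\wh\Delta,\wh p$ and translate each conclusion back via $\wh\lambda_n=-\lambda_n$ and $\wh p=-p$. Since multiplication by $-1$ interchanges $\inf$ with $\sup$ and $\min$ with $\max$, the min-max identity \eqref{minmax1} for $\wh\lambda_n$ becomes the max-min identity \eqref{minmax_re1} for $\lambda_n$ and \eqref{minmax2} becomes \eqref{minmax_re2}; likewise the upper bounds $\le a$ in \eqref{minmax_n_le_kappa} become the lower bounds $\ge b$ in \eqref{minmax_re_n_le_kappa}, and the lower bounds $\ge\sup\wh\Delta'$ in \eqref{minmax_n_g_N} become the upper bounds $\le\inf\Delta'$ in \eqref{minmax_re_n_g_N}.

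All the genuine analytic content is carried by Theorem~\ref{th:var_left}; the only real work is the sign bookkeeping, and the step I would check most carefully is the case-by-case verification that $\wh p=-p$ meets Definition~\ref{StJohn} for $\wh T$. Its four boundary clauses depend on whether the endpoints $a,b$ lie in $\Delta$, and these membership conditions must be tracked correctly through the reflection, under which $a\in\Delta$ corresponds to $\sup\wh\Delta\in\wh\Delta$ and $b\in\Delta$ to $\inf\wh\Delta\in\wh\Delta$; getting a single one of these strict-versus-nonstrict distinctions wrong would break the argument.
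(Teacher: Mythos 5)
Your proposal is correct and follows exactly the paper's own argument: the paper also proves Theorem~\ref{th:var_right} by applying Theorem~\ref{th:var_left} to $\wh T(\lambda)\defeq T(-\lambda)$ on $\wh\Delta\defeq\{-\lambda\mid\lambda\in\Delta\}$ with $\wh p\defeq -p$, noting that $(\searrow)$ and $(\nearrow)$ interchange under the reflection while (I) and (II) are unaffected. Your additional care with the endpoint clauses of Definition~\ref{StJohn} and with the identification $\wh T(-b')=T(b')$ for $\kappa$ only makes explicit what the paper leaves to the reader.
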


\begin{proof}
The theorem follows from Theorem~\ref{th:var_left} applied to the function
$\wh T(\lambda)\defeq T(-\lambda)$, $-\lambda \in \Omega$.  With $\wh a\defeq -b$,
$\wh b\defeq -a$ and $\wh \Delta \defeq \left\{ -\lambda \mid \lambda \in \Delta\right\}$
all assumptions of Theorem~\ref{th:var_left} are satisfied, namely (I) and (II)
remain the same and $(\searrow)$ turns into $(\nearrow)$ and vice versa. That is,
$\wh T$ satisfies either {\rm (I), ($\searrow$)} or {\rm(II), ($\nearrow$)}.
Then the mapping $\wh p(x) \defeq -p(x)$ is a generalized Rayleigh functional
for $\wh T$ on $\wh \Delta$; see Definition~\ref{StJohn}.
Since $\wh\lambda_n=-\lambda_n$ for $\wh\lambda_n \in  \sigma_{\rm p}(\wh T)$,
all assertions of Theorem~\ref{th:var_right} follow from Theorem~\ref{th:var_left}.
\end{proof}

\begin{remark}\label{rem_minmax2}
If the functional $p$ is chosen such that it is continuous and $p(cx)=p(x)$
for  $c\in\CC\setminus\{0\}$ and $x\in\cD$ (see Remark
\ref{rem_minmax}),
then the infimum in \eqref{minmax_re1} is actually a minimum, i.e.\
the eigenvalue $\lambda_n$, $n\in \mathbb N$, $n\leq N$, satisfies
\begin{equation*}
\lambda_n = \max_{\substack{L\subset\cD \\ \dim L =\kappa+n}}
  \;\;\min_{x\in L\setminus\{0\}}\;\; p(x).
\end{equation*}
A similar statement
applies to \eqref{minmax_re_n_le_kappa}
 and \eqref{minmax_re_n_g_N}.
\end{remark}

% *******************************************************************
\section{Framework}
\label{sec:framework}
% *******************************************************************

%Our aim is to study second-order systems of the following form
%\begin{equation}\label{sys}
%  \ddot{z}(t) + D \dot{z} (t) + A_0 z(t) = 0
%\end{equation}
%with symmetric operators $A_0$ and $D$, which we define via sesquilinear forms.
Let $H$ be a Hilbert space and let $\fra_0$ and $\frd$ be
sesquilinear forms on $H$ with domains $\cD(\fra_0)$ and $\cD(\frd)$, respectively,
such that the following conditions are satisfied.
\myassumptionbf{F1}{
The sesquilinear form $\fra_0$ is densely defined, closed, symmetric and
bounded from below by a positive constant, i.e.\ $\exists\,c_1>0$ such that
$\fra_0[x]\ge c_1\|x\|^2$ for $x\in\cD(\fra_0)$.
}
\myassumptionbf{F2}{
The sesquilinear form $\frd$ is symmetric, satisfies $\cD(\frd)\supset\cD(\fra_0)$,
and there exists a $c_2>0$ such that
\[
  0 \le \frd[x] \le c_2\fra_0[x] \qquad\text{for all}\;\;x\in\cD(\fra_0).
\]
}

It is our aim to study the following second order differential equation
\begin{equation}\label{sys}
  \langle\ddot{z}(t),y \rangle + \frd[\dot{z} (t), y] + \fra_0 [z(t),y] = 0
  \qquad \text{for all } y\in \cD(\fra_0).
\end{equation}
In a first step we find an equivalent
Cauchy problem. Then, using the standard theory of semigroups,
we obtain  solutions of \eqref{sys}. Therefore we associate
with the form $\fra_0$ a positive definite self-adjoint operator $A_0$
with $\cD(A_0)\subset\cD(\fra_0)$ and $0\in\rho(A_0)$ via the First Representation Theorem \cite[Theorem~VI.2.1]{K}, i.e.\
\begin{equation}\label{reprA0}
  \fra_0[x,y] = \langle A_0x,y\rangle
  \qquad\text{for all}\;\;x\in\cD(A_0),\;y\in\cD(\fra_0).
\end{equation}
The operator $A_0$ is called \emph{stiffness operator}.  The Second Representation Theorem \cite[Theorem~VI.2.6]{K} shows $\cD(A_0^{1/2})=\cD(\fra_0)$ and
\[ \fra_0[x,y] = \langle A_0^{1/2}x,A_0^{1/2}y\rangle
  \qquad\text{for all}\;\;x,y\in \cD(\fra_0).\]
We define the two spaces
\begin{equation}\label{Cambridge11}
  H_{\frac{1}{2}} \defeq \dom (A_0^{1/2}) \qquad \text{with norm} \quad
  \|x\|_{H_{\frac{1}{2}}} \defeq \bigl\|A_0^{1/2} x\bigr\|_{H}
\end{equation}
and
\begin{equation}\label{Cambridge12}
  \begin{aligned}
    & H_{-\frac{1}{2}} \text{ as the completion of $H$ with respect to the norm} \\
    & \|x\|_{H_{-\frac{1}{2}}} \defeq \bigl\|A_0^{-1/2}x\bigr\|_H.
  \end{aligned}
\end{equation}
By continuity, $A_0$ and $A_0^{1/2}$ can be extended to isometric
isomorphisms from $H_{\frac{1}{2}}$ onto $H_{-\frac{1}{2}}$ and from $H$
onto $H_{-\frac{1}{2}}$, respectively.  These extensions are also denoted
by $A_0$ and $A_0^{1/2}$.  The space $H_{-\frac{1}{2}}$ can
be identified with the dual space of $H_{\frac{1}{2}}$ by identifying elements
$x\in H_{-\frac{1}{2}}$ with bounded linear functionals on $H_{\frac{1}{2}}$ as follows
\begin{equation}\label{duality_innprod}
  \langle x,y\rangle_{\mphhalf} \defeq \bigl\langle A_0^{-1/2}x,A_0^{1/2}y\bigr\rangle,
  \qquad x\in H_{-\frac{1}{2}}, \,y\in H_{\frac{1}{2}}.
\end{equation}
Note that, for $x\in H$, $y\in H_{\frac{1}{2}}$, we have
\begin{equation}\label{Wuppi1}
  \langle x,y\rangle_{H_{-\frac{1}{2}}\times H_{\frac{1}{2}}}=\langle x,y\rangle_H.
\end{equation}
The form $\fra_0$ can be expressed in terms of the extended operator $A_0$:
\begin{equation}\label{a0A0}
  \fra_0[x,y] = \langle A_0x,y\rangle_{H_{-\frac12}\times H_{\frac12}}
  \qquad\text{for all}\;\;x,y\in H_{\frac12};
\end{equation}
this relation is obtained from \eqref{reprA0} by continuous extension.

Assumption \textbf{(F2)} implies that $\frd$ restricted to $H_{\frac12}$
is a bounded, non-negative, symmetric sesquilinear form on the Hilbert space $H_{\frac12}$.
Hence, by \cite[Theorem VI.2.7]{K}
there exists a bounded, self-adjoint, non-negative operator $\wt D$
on $H_{\frac12}$ such that
\[
  \frd[x,y] = \bigl\langle\wt Dx,y\bigr\rangle_{H_{\frac12}}
  \qquad\text{for all}\;\;x,y\in H_{\frac12}.
\]
Now we define the \emph{damping operator} $D$ by
\[
  D \defeq A_0\wt D,
\]
where $A_0$ is considered as a bounded operator from $H_{\frac12}$
onto $H_{-\frac12}$.
Clearly, the operator $D$ is bounded from $H_{\frac12}$ to $H_{-\frac12}$.
Using \eqref{duality_innprod} we obtain the following connection
between $\frd$ and $D$:
\begin{equation}\label{reprD}
\begin{aligned}
  \frd[x,y] &= \bigl\langle \wt Dx,y\bigr\rangle_{H_{\frac12}}
  = \bigl\langle A_0^{1/2}\wt Dx,A_0^{1/2}y\bigr\rangle \\[0.5ex]
  &= \bigl\langle A_0^{-1/2}Dx,A_0^{1/2}y\bigr\rangle
  = \langle Dx,y\rangle_{H_{-\frac12}\times H_{\frac12}}
\end{aligned}
\end{equation}
for $x,y\in H_{\frac12}$.

We consider the following standard first-order evolution equation
\begin{equation}\label{diffeqA}
  \dot{x} (t) = {\mathcal A} x(t)
\end{equation}
in the space $\cH\defeq H_{\frac{1}{2}} \times H$
where $\cA: \dom(\cA)\subset\cH\rightarrow\cH$ is given by
\begin{align}\label{spaetabends}
  & \cA = \begin{bmatrix} 0\; & \;I \\[0.5ex] -A_0\; & \;-D \end{bmatrix}, \\[1ex]
  \label{spaetabends2}
  & \dom(\cA) = \left\{\vect{z}{w} \in H_{\frac{1}{2}} \times H_{\frac{1}{2}} \Bigm|
  A_0 z + D w \in H  \right\}.
\end{align}
It is easy to see (e.g.\ \cite{TWII}) that $\cA$ has a bounded inverse in $\cH$ given by
\begin{equation} \label{NaDann}
  \cA^{-1} = \begin{bmatrix} -A_0^{-1}D\; & \;-A_0^{-1} \\[1ex] I & 0 \end{bmatrix} = \begin{bmatrix} -\wt D\; & \;-A_0^{-1} \\[1ex] I & 0 \end{bmatrix},
\end{equation}
where $A_0^{-1}D$ is considered as an operator acting in $H_{\frac{1}{2}}$
and $I$ is the embedding from $H_{\frac12}$ into $H$.
The operator $\cA$ itself is not self-adjoint in the Hilbert space $\cH$.
However, with
\begin{equation*}
  J \defeq \begin{bmatrix} I\; & \;0 \\[0.3ex] 0\; & \;-I \end{bmatrix}
\end{equation*}
the operator $J\cA$ is symmetric in $\cH$.
Since $\cA$ has a bounded inverse, the operator $J\cA$ is even self-adjoint in $\cH$.
Therefore,
\begin{equation*}\label{defj}
  \cA^*= J\cA J,\qquad \text{ with } \dom(\cA^*)=J\dom(\cA)
\end{equation*}
(see also  \cite[Proof of Lemma~4.5]{WT}) and
\begin{equation*}
  \Re \langle \cA x,x \rangle \leq 0 \quad \text{for } x \in \dom(\cA) \quad
  \text{and} \quad
  \Re \langle \cA^* x,x \rangle \leq 0 \quad \text{for } x \in \dom(\cA^*).
\end{equation*}
This implies that $\cA$ is the generator of a strongly continuous semigroup
of contractions on the state space $\cH$.
This fact is well known; see, e.g.\ \cite{bi,biw,CLL,HS,las} or
\cite[Proposition~5.1]{WT}. Hence, \eqref{diffeqA} together
with an appropriate initial value has a unique (classical) solution.
This implies the following proposition.

\begin{proposition} \label{Nottingham}
Assume that {\rm\textbf{(F1)}--\textbf{(F2)}} are satisfied. For
$z_0, w_0 \in  H_{\frac{1}{2}}$ with $A_0z_0 + Dw_0 \in H$ there exists
a solution $z:\mathbb R^+\to H_{\frac{1}{2}}$ of \eqref{sys} that satisfies
\begin{itemize}
\item $z(0)=z_0$ and $\dot{z}(0) = w_0$;
\item the function $z$ is continuously differentiable in $H_{\frac{1}{2}}$;
\item the function $\dot{z}$ is continuously differentiable in $H$.
\end{itemize}
Moreover, a solution of \eqref{sys} with the above properties is unique
and equals the first component of the classical solution of the Cauchy
problem
\begin{equation}\label{RobinHood}
  \vect{\dot{z}}{\dot{w}}= \cA \vect{z}{w}, \qquad
  \vect{z(0)}{w(0)} = \vect{z_0}{w_0}
\end{equation}
with $\binom{z_0}{w_0}\in\dom(\cA)$.
\end{proposition}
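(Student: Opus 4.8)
The plan is to exploit the equivalence between the variational equation \eqref{sys} and the first-order Cauchy problem \eqref{RobinHood} already prepared by the preceding analysis of $\cA$. Since it has been shown that both $\cA$ and $\cA^*=J\cA J$ are dissipative and that $\cA$ has a bounded inverse, the Lumer--Phillips theorem guarantees that $\cA$ generates a strongly continuous contraction semigroup on $\cH=H_{1/2}\times H$. Consequently, for $\binom{z_0}{w_0}\in\dom(\cA)$ the Cauchy problem \eqref{RobinHood} has a unique classical solution $t\mapsto\binom{z(t)}{w(t)}$, that is, a function that is continuously differentiable in $\cH$, takes values in $\dom(\cA)$ for every $t\ge0$, and satisfies the differential equation pointwise.

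First I would read off the regularity of the first component $z$ from this classical solution. Being continuously differentiable in $\cH$, the component $z$ is continuously differentiable in $H_{1/2}$ (second bullet). The first row of the block operator $\cA$ gives $\dot z=w$; since $w$ is continuously differentiable in $H$, so is $\dot z$, which yields the third bullet and in particular $\ddot z=\dot w\in H$. Evaluating the second row of $\cA$ then produces the operator identity $\ddot z=\dot w=-A_0 z-D\dot z$, valid in $H$, the values lying in $H$ because $\binom{z}{w}\in\dom(\cA)$ forces $A_0 z+Dw\in H$.

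Next I would translate this operator identity into the weak form \eqref{sys}. Testing against an arbitrary $y\in\cD(\fra_0)=H_{1/2}$ and using \eqref{Wuppi1} to pass from the $H$-inner product to the $\mphhalf$-pairing, the representation formulas \eqref{a0A0} and \eqref{reprD} convert $\langle A_0 z,y\rangle$ into $\fra_0[z,y]$ and $\langle D\dot z,y\rangle$ into $\frd[\dot z,y]$. This gives precisely $\langle\ddot z,y\rangle+\frd[\dot z,y]+\fra_0[z,y]=0$, so $z$ solves \eqref{sys}, and the initial conditions $z(0)=z_0$, $\dot z(0)=w(0)=w_0$ hold.

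For uniqueness I would run the argument in reverse: given any $z$ satisfying \eqref{sys} with the three regularity properties, set $w\defeq\dot z$. The second property gives $w\in H_{1/2}$, and reading \eqref{sys} through \eqref{a0A0}, \eqref{reprD} and \eqref{Wuppi1} shows that $A_0 z+Dw=-\ddot z\in H$, so that $\binom{z}{w}\in\dom(\cA)$; the two rows of $\cA$ then reproduce \eqref{RobinHood}. Hence every solution of \eqref{sys} of the required type arises as the first component of a classical solution of \eqref{RobinHood}, and uniqueness for the latter transfers to the former. The main obstacle is purely the bookkeeping in the scale of spaces: a priori $A_0 z$ and $Dw$ live only in $H_{-1/2}$, and one must use the duality identification of $H_{-1/2}$ with the dual of $H_{1/2}$ together with \eqref{Wuppi1} to recognize that their sum lies in $H$ and to move cleanly between the operator equation in $H$ and the sesquilinear-form equation \eqref{sys}.
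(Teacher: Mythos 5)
Your proposal is correct and follows essentially the same route as the paper, which states the proposition as an immediate consequence of the preceding discussion (dissipativity of $\cA$ and $\cA^*$, bounded invertibility, hence generation of a contraction semigroup and unique classical solvability of \eqref{RobinHood}) without writing out a separate proof. You merely make explicit the translation between the operator identity in $\dom(\cA)$ and the form equation \eqref{sys} via \eqref{Wuppi1}, \eqref{a0A0} and \eqref{reprD}, which is exactly the bookkeeping the paper leaves implicit.
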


\medskip

We mention that a similar relation holds for mild solutions of the Cauchy problem
\eqref{RobinHood} with $\binom{z_0}{w_0}$ in $\cH$ instead of $\dom(\cA)$
and a somehow weaker formulation
of \eqref{sys},
\begin{equation}\label{weakequ}
  \frac{\rd}{\rd t}\Bigl(\langle\dot{z}(t),y\rangle + \frd[z(t),y]\Bigr) + \fra_0[z(t),y] = 0
  \qquad \text{for all } y\in \cD(\fra_0).
\end{equation}
For details we refer to \cite[Theorem~2.2]{CLL}, see also \cite{biw}.

\begin{remark}
The operators $A_0$ and $D$ satisfy the following conditions
\textbf{(A1)} and \textbf{(A2)}, which appeared in various papers;
see, e.g.\ \cite{jatr,JTW,JT09}.
\myassumptionbf{A1}{
The stiffness operator $A_0 : \dom (A_0) \subset H \rightarrow H$
is a self-adjoint, positive definite linear operator on a Hilbert
space $H$ such that $0\in\rho(A_0)$.
}
\myassumptionbf{A2}{
The damping operator $D: H_{\frac{1}{2}} \rightarrow
H_{-\frac{1}{2}}$ is a bounded operator with
\[
  \langle Dz, z\rangle_{H_{-\frac{1}{2}}\times
  H_{\frac{1}{2}}} \ge 0  ,\qquad z\in H_{\frac{1}{2}}.
\]
}
\vspace*{-1ex}
Instead of starting with the forms and then constructing the operators
one could also start with two operators $A_0$ and $D$ that
satisfy \textbf{(A1)} and \textbf{(A2)} and then define the
sesquilinear forms $\fra_0$ and $\frd$ via
\begin{equation*}
  \begin{aligned}
    \fra_0[x,y] \defequ \langle A_0x,y\rangle_{H_{-\frac{1}{2}}\times
    H_{\frac{1}{2}}}, \\[1ex]
    \frd[x,y] \defequ \langle Dx,y\rangle_{H_{-\frac{1}{2}}\times
    H_{\frac{1}{2}}},
  \end{aligned}
  \qquad x,y\in H_{\frac12}.
\end{equation*}
It is easy to see that these forms satisfy \textbf{(F1)} and \textbf{(F2)}.
\end{remark}

In the following we study the spectrum of $\mathcal A$.
For $\smvect{x_1}{y_1},\smvect{x_2}{y_2} \in  H_{\frac{1}{2}}\times H$ we define
an indefinite inner product on $\cH$ by
\begin{equation*}
  \left[ \vect{x_1}{y_1},\vect{x_2}{y_2}\right] \defeq
  \left\langle J \vect{x_1}{y_1},\vect{x_2}{y_2}\right\rangle
  = \langle x_{1},x_{2} \rangle_{H_\frac{1}{2}}-\langle y_{1},y_{2} \rangle.
\end{equation*}
Then $(\cH, \Skindef )$ is a Krein space and $\cA$ is a self-adjoint operator
with respect to $\Skindef$ (note that the latter is equivalent to the
self-adjointness of $J\cA$ in $\cH$). Hence $\sigma(\cA)$ is
symmetric with respect to $\mathbb R$; see, e.g.\
\cite[Theorem VI.6.1]{B}.
 For the basic theory of Krein spaces
and operators acting therein we refer to \cite{AI} and \cite{B}.
In the following proposition we collect the above considerations.

\begin{proposition}
\label{prop:cont}
If {\rm\textbf{(F1)}} and {\rm\textbf{(F2)}} are satisfied, then
the operator $\cA$ is self-adjoint in the Krein space
$(\cH, \Skindef )$, its spectrum is contained
in the closed left half-plane and is symmetric with respect to the real line.
The operator $\cA$ has a bounded inverse,
and it is the generator of a
strongly continuous semigroup of contractions on the state space $\cH$.
\end{proposition}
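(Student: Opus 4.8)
The plan is to assemble the proposition from the structural facts prepared in the discussion preceding it, organising them into four independent assertions: bounded invertibility, self-adjointness in the Krein space $(\cH,\Skindef)$, symmetry of the spectrum about $\RR$, and generation of a contraction semigroup together with the spectral inclusion in the closed left half-plane. The bounded invertibility is immediate: formula \eqref{NaDann} exhibits $\cA^{-1}$ as an everywhere-defined bounded operator on $\cH$, so $0\in\rho(\cA)$. For the self-adjointness I would use the equivalence already noted in the text, namely that $\cA$ is self-adjoint with respect to $\Skindef$ if and only if $J\cA$ is self-adjoint in the Hilbert space $\cH$; thus it suffices to work with $J\cA$.

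For the self-adjointness of $J\cA$ I would proceed in two steps. First I verify symmetry by a direct block computation: since $J\cA\binom{z}{w}=\binom{w}{A_0z+Dw}$ on $\dom(\cA)$, expanding $\langle J\cA\binom{z_1}{w_1},\binom{z_2}{w_2}\rangle_\cH$ and rewriting the $H_{\frac12}$-inner product as $\langle A_0^{1/2}\,\cdot\,,A_0^{1/2}\,\cdot\,\rangle$ while using \eqref{Wuppi1} to read the $H$-pairing $\langle A_0z_1+Dw_1,w_2\rangle_H$ as the duality pairing \eqref{duality_innprod}, one sees that both sides reduce to $\langle A_0^{1/2}z_1,A_0^{1/2}w_2\rangle+\langle A_0^{1/2}w_1,A_0^{1/2}z_2\rangle+\frd[w_1,w_2]$, the symmetry of $\frd$ handling the cross term. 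Second, I upgrade symmetry to self-adjointness using that a symmetric operator with range all of $\cH$ is automatically self-adjoint: if $y\in\dom((J\cA)^*)$ then, since $\ran(J\cA)=\cH$, there is $x\in\dom(J\cA)$ with $J\cA x=(J\cA)^*y$, and symmetry gives $\langle J\cA u,y-x\rangle=0$ for all $u$, forcing $y=x\in\dom(J\cA)$. Surjectivity of $J\cA$ follows from the bounded inverse $(J\cA)^{-1}=\cA^{-1}J$ (using $J^2=I$), which is bounded and everywhere defined. Hence $J\cA$ is self-adjoint, i.e.\ $\cA$ is self-adjoint in $(\cH,\Skindef)$.

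For the semigroup I would establish dissipativity of both $\cA$ and $\cA^*$ and invoke the Lumer--Phillips theorem. For $x=\binom{z}{w}\in\dom(\cA)$ one has $\cA x=\binom{w}{-A_0z-Dw}$ with $A_0z+Dw\in H$, and the same pairing manipulations as above give $\langle\cA x,x\rangle_\cH=\overline{\langle A_0z,w\rangle_{\mphhalf}}-\langle A_0z,w\rangle_{\mphhalf}-\frd[w]$, whose real part is $-\frd[w]\le 0$ by \textbf{(F2)} and \eqref{reprD}; thus $\Re\langle\cA x,x\rangle\le 0$. Since $\cA^*=J\cA J$, conjugating by the isometry-type involution $J$ yields $\Re\langle\cA^*x,x\rangle\le 0$ on $\dom(\cA^*)$ as well. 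As $\cA$ is closed, densely defined, and both $\cA$ and $\cA^*$ are dissipative, Lumer--Phillips produces a strongly continuous semigroup of contractions. Generation of a contraction semigroup then gives $\{\lambda:\Re\lambda>0\}\subset\rho(\cA)$, so $\sigma(\cA)$ lies in the closed left half-plane.

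Finally, for the symmetry of $\sigma(\cA)$ about $\RR$ I would use the Krein-space self-adjointness already obtained: the Krein adjoint satisfies $\cA^{[*]}=\cA$, while $\sigma(\cA^{[*]})=\sigma(J\cA^*J)=\sigma(\cA^*)=\overline{\sigma(\cA)}$, whence $\sigma(\cA)=\overline{\sigma(\cA)}$; this is exactly \cite[Theorem~VI.6.1]{B}. The one genuine computation, and the step I expect to be most delicate, is the dissipativity (and the twin symmetry) identity: the care lies in treating $A_0z$ and $Dw$ as elements of $H_{-\frac12}$ whose \emph{sum} lies in $H$, and in combining the $H_{\frac12}$-term $\langle w,z\rangle_{H_{\frac12}}$ with the pairing $\langle A_0z,w\rangle_{\mphhalf}$ via \eqref{duality_innprod} and \eqref{Wuppi1} so that their difference collapses to a purely imaginary quantity, leaving only $-\frd[w]$. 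The remaining steps are bookkeeping and citation of the facts recorded just before the statement.
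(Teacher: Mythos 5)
Your proposal is correct and follows essentially the same route as the paper, which simply collects the considerations stated immediately before the proposition: bounded invertibility from \eqref{NaDann}, symmetry of $J\cA$ upgraded to self-adjointness via surjectivity, dissipativity of $\cA$ and $\cA^*=J\cA J$ combined with Lumer--Phillips for the contraction semigroup and the spectral inclusion, and spectral symmetry from Krein-space self-adjointness via \cite[Theorem~VI.6.1]{B}. You merely write out in full the block computations that the paper leaves implicit, and these are carried out correctly.
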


% ----------------------------------------------------------

Proposition~\ref{prop:cont} guarantees that the spectrum
of ${\mathcal A}$ is contained in ${\mathbb C}_{-}$, where ${\mathbb C}_{-}$
denotes the closed left half-plane $\{z\in\CC \mid \Re z \leq 0\}$.
Since $\cA$ has a bounded inverse, we even have
$\sigma(\mathcal A)\subset\CC_-\setminus\{0\}$.
However, apart from this restriction and the symmetry with respect to the real line,
the spectrum of ${\mathcal A}$ is quite arbitrary; see, e.g.\ \cite[Examples~3.5 and 3.6]{JMT}
and we refer to Example~3.2 in \cite{jatr}.

For the rest of the paper we assume that, in addition to \textbf{(F1)} and \textbf{(F2)},
also the following condition is satisfied.

\myassumptionbf{F3}{
The operator $A_0^{-1}$ is a compact operator in $H$.
}

\medskip

\noindent
In the following we consider $\wt D=A_0^{-1}D$ and $A_0^{-1/2}DA_0^{-1/2}$
as bounded operators acting in $H_{\frac{1}{2}}$ and $H$, respectively.
For $\lambda \in \mathbb C$ the relations
\begin{align*}
  \ker\bigl(A_0^{-1/2}DA_0^{-1/2}-\lambda \bigr)
  &= A_0^{1/2}\Bigl(\ker\bigl(\wt D-\lambda \bigr)\Bigr), \\[0.5ex]
  \ran\bigl(A_0^{-1/2}DA_0^{-1/2}-\lambda \bigr)
  &= A_0^{1/2}\Bigl(\ran\bigl(\wt D-\lambda \bigr)\Bigr)
\end{align*}
hold.  This, together with the fact that $A_0^{1/2}$ is an isomorphism
from $H_{\frac{1}{2}}$ onto $H$, implies that
\begin{equation}\label{Wuppi2}
  \sigma\bigl(A_0^{-1/2}DA_0^{-1/2}\bigr) = \sigma\bigl(\wt D\bigr),
  \qquad
  \sess\bigl(A_0^{-1/2}DA_0^{-1/2}\bigr) = \sess\bigl(\wt D\bigr).
\end{equation}
In the next definition we introduce some numbers that are used
in the following proposition for a further description of the spectrum of $\cA$
and in the next section in connection with the study of a quadratic operator polynomial.

% ----------------------------------------------------------
\begin{definition}
Set
\begin{equation}\label{Elbersfeld}
  \delta \defeq \min\sigma\bigl(A_0^{-1/2}DA_0^{-1/2}\bigr), \qquad
  \gamma \defeq \max\sigma\bigl(A_0^{-1/2}DA_0^{-1/2}\bigr).
\end{equation}
If $H$ is finite-dimensional, then % and, hence, $\sess\bigl(A_0^{-1/2}DA_0^{-1/2}\bigr)=\emptyset$,
set
\begin{equation}\label{Elbersfeldberg}
  \delta_0 \defeq +\infty, \qquad \gamma_0 \defeq 0;
\end{equation}
otherwise, set
\begin{equation}\label{Kuchen}
  \delta_0 \defeq \min\sess\bigl(A_0^{-1/2}DA_0^{-1/2}\bigr), \qquad
  \gamma_0 \defeq \max\sess\bigl(A_0^{-1/2}DA_0^{-1/2}\bigr).
\end{equation}
Moreover, if $H$ is infinite-dimensional, $\delta_0=0$ and $\gamma_0>0$, then set
\begin{equation}\label{defdelta1}
  \delta_1 \defeq \inf\bigl(\sess\bigl(A_0^{-1/2}DA_0^{-1/2}\bigr)\setminus\{0\}\bigr).
\end{equation}
\end{definition}

\medskip

If $H$ is infinite-dimensional, then clearly
$0\leq \delta\leq \delta_0\leq \gamma_0\leq \gamma$.
The numbers $\delta$ and $\gamma$ can be expressed in terms of the
forms $\fra_0$ and $\frd$:
\begin{equation}\label{deltainf}
\begin{aligned}
  \delta &= \inf_{x\in H\setminus\{0\}}
  \frac{\bigl\langle A_0^{-1/2}DA_0^{-1/2}x,x\bigr\rangle}{\|x\|^2}
  = \inf_{y\in H_{1/2}\setminus\{0\}}
  \frac{\langle Dy,y\rangle_{H_{-\frac{1}{2}}\times H_{\frac{1}{2}}}}{\langle A_0y,
  y\rangle_{H_{-\frac{1}{2}}\times H_{\frac{1}{2}}}}
  \\[1ex]
  &= \inf_{y\in H_{1/2}\setminus\{0\}} \frac{\frd[y]}{\fra_0[y]}\,,
\end{aligned}
\end{equation}
where we made the substitution $y=A_0^{-1/2}x$,
and similarly
\begin{equation}\label{gammasup}
  \gamma = \sup_{y\in H_{1/2}\setminus\{0\}} \frac{\frd[y]}{\fra_0[y]}\,.
\end{equation}
If $H$ is infinite-dimensional, then one can use the standard variational
principle for bounded operators to express $\delta_0$ and $\gamma_0$
in terms of $\fra_0$ and $\frd$:
\begin{equation}\label{delta0inf}
  \delta_0 = \sup_{n\in\NN}\;\; \inf_{\substack{L\subset H_{1/2} \\ \dim L=n}}\;\;
  \sup_{y\in L\setminus\{0\}}\;
  \frac{\frd[y]}{\fra_0[y]}\,, \qquad\qquad
  \gamma_0 = \inf_{n\in\NN}\;\; \sup_{\substack{L\subset H_{1/2} \\ \dim L=n}}\;\;
  \inf_{y\in L\setminus\{0\}}\;
  \frac{\frd[y]}{\fra_0[y]}\,.
\end{equation}

% ----------------------------------------------------------
\begin{proposition} \label{Ilmenau}
Assume that {\rm\textbf{(F1)}--\textbf{(F3)}} are satisfied.
Then
\begin{align}
  \label{esssA}
  \sess(\mathcal A) &= \left\{\lambda\in\mathbb C\backslash\{0\}\,\Bigm|\,
    \frac{1}{\lambda}\in \sess\bigl(-\wt D\bigr)\right\} \\[1ex]
  \label{esssA2}
  &= \left\{\lambda\in\mathbb C\backslash\{0\}\,\Bigm|\,
    \frac{1}{\lambda}\in \sess\bigl(-A_0^{-1/2}DA_0^{-1/2}\bigr)\right\} \\[1ex]
  \label{esssA3}
  &\subset (-\infty,0).
\end{align}
The spectrum in $\CC\setminus\sess(\cA)$ is a discrete set consisting only of
eigenvalues.
Moreover, the set $\sigma(\cA)\setminus\RR$ has no finite accumulation point.

Moreover, the following statements are true:
\begin{itemize}
\item
if $\gamma_0=0$, then $\sess(\mathcal A)=\emptyset$;
\item
if $\gamma_0>0$ and $\delta_0=0$, then
\begin{align*}
  \inf\sess(\mathcal A) &= \begin{cases}
    -\infty & \text{if}\;\;\delta_1=0, \\[1ex]
    -\dfrac{1}{\delta_1} & \text{if}\;\;\delta_1>0,
  \end{cases} \\[2ex]
  \max\sess(\mathcal A) &= -\frac{1}{\gamma_0}\,;
\end{align*}
\item
if $\delta_0>0$, then
\[
  \min\sess(\mathcal A) = -\frac{1}{\delta_0} \qquad\text{and}\qquad
  \max\sess(\mathcal A) = -\frac{1}{\gamma_0}\,.
\]
\end{itemize}
\end{proposition}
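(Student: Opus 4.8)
The plan is to transfer everything to the bounded, explicitly known inverse $\cA^{-1}$ from \eqref{NaDann} and to read off the essential spectrum there. Writing $\cH=H_{1/2}\times H$, I would split
\[
  \cA^{-1}=B+K,\qquad
  B=\begin{bmatrix}-\wt D & 0\\[0.3ex] 0 & 0\end{bmatrix},\qquad
  K=\begin{bmatrix}0 & -A_0^{-1}\\[0.3ex] I & 0\end{bmatrix},
\]
where $\wt D$ acts in $H_{1/2}$, $-A_0^{-1}$ maps $H$ into $H_{1/2}$, and $I$ is the embedding $H_{1/2}\hookrightarrow H$. Under \textbf{(F3)} the operator $A_0^{-1}$, and hence $A_0^{-1/2}$, is compact; this makes both off-diagonal entries of $K$ compact (for the embedding note that $\|x\|_H=\|A_0^{-1/2}A_0^{1/2}x\|_H$ together with compactness of $A_0^{-1/2}$), so $K$ is compact. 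Since Fredholmness is stable under compact perturbations, $\sess(\cA^{-1})=\sess(B)$, and as $B$ is block diagonal, $\sess(B)$ is the union of $\sess(-\wt D)$ and the essential spectrum of the zero operator on the infinite-dimensional space $H$, i.e.\ $\sess(\cA^{-1})=\sess(-\wt D)\cup\{0\}$ (the finite-dimensional case is trivial, giving $\sess(\cA)=\emptyset$, consistent with $\gamma_0=0$).

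Next I would use that, since $0\in\rho(\cA)$ by Proposition~\ref{prop:cont}, the essential spectra of $\cA$ and its bounded inverse are related by the inversion rule $\lambda\in\sess(\cA)\iff\lambda^{-1}\in\sess(\cA^{-1})$ for $\lambda\ne0$. As $\lambda^{-1}=0$ is unattainable for finite $\lambda$, the extra point $0$ drops out and
\[
  \sess(\cA)=\bigl\{\lambda\ne0 : \lambda^{-1}\in\sess(-\wt D)\bigr\},
\]
which is \eqref{esssA}; then \eqref{esssA2} follows from \eqref{Wuppi2}. Because $\wt D\ge0$ is bounded, $\sess(-\wt D)\subset(-\infty,0]$, and discarding the endpoint $0$ yields $\sess(\cA)\subset(-\infty,0)$, i.e.\ \eqref{esssA3}.

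For the remaining assertions I would invoke the analytic Fredholm theorem. On the open connected set $\CC\setminus\sess(\cA)$ (connected since $\sess(\cA)\subset\RR$) the family $\lambda\mapsto\cA-\lambda$ consists of Fredholm operators of index zero and is boundedly invertible on the right half-plane by Proposition~\ref{prop:cont}; hence $(\cA-\lambda)^{-1}$ is meromorphic there, and $\sigma(\cA)\setminus\sess(\cA)$ is a discrete set of eigenvalues of finite multiplicity whose only possible accumulation points lie in $\sess(\cA)\cup\{\infty\}\subset\RR\cup\{\infty\}$. This already shows that $\sigma(\cA)\setminus\RR$ has no accumulation point in $\CC\setminus\RR$. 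The step I expect to be the main obstacle is to rule out that non-real eigenvalues pile up against the real essential spectrum itself; I would treat this by a local analysis of $\cA$ as a self-adjoint operator in the Krein space $(\cH,\Skindef)$, examining the sign of $[\,x,x\,]$ along approximate eigensequences at points of $\sess(\cA)$ in order to identify spectral points of definite type, near which the non-real spectrum cannot accumulate.

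Finally, the three displayed formulas are obtained by pushing the description of $\sess(\wt D)=\sess(A_0^{-1/2}DA_0^{-1/2})\subset[0,\infty)$ through the increasing homeomorphism $t\mapsto-1/t$ of $(0,\infty)$ onto $(-\infty,0)$, using the relation above. If $\gamma_0=0$, then $\sess(\wt D)\subset\{0\}$ contributes nothing and $\sess(\cA)=\emptyset$. If $\delta_0>0$, then $\min\sess(\wt D)=\delta_0$ and $\max\sess(\wt D)=\gamma_0$ map to $\min\sess(\cA)=-1/\delta_0$ and $\max\sess(\cA)=-1/\gamma_0$. If $\delta_0=0<\gamma_0$, the largest point $\gamma_0$ again gives $\max\sess(\cA)=-1/\gamma_0$, while the smallest positive point of $\sess(\wt D)$, namely $\delta_1$, governs the bottom: $\delta_1=0$ forces $t\to0^+$ and hence $\inf\sess(\cA)=-\infty$, whereas $\delta_1>0$ gives $\inf\sess(\cA)=-1/\delta_1$.
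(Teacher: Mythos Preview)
Your approach is correct and, in fact, more self-contained than the paper's own proof, which essentially outsources the two substantive steps to \cite{jatr}: the identity \eqref{esssA} is quoted from \cite[Theorem~4.1]{jatr}, and the claim that no point of $\sess(\cA)$ is an accumulation point of the non-real spectrum is quoted from \cite[Corollary~5.2]{jatr}. Your compact-perturbation argument for $\cA^{-1}=B+K$ is exactly the right way to produce \eqref{esssA} directly (and is in spirit what \cite{jatr} does); the identification $\sess(\cA^{-1})=\sess(-\wt D)\cup\{0\}$ and the inversion rule are clean and correct, as is the derivation of the bullet-point formulas via $t\mapsto -1/t$.

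The one place your proposal is genuinely incomplete is precisely where you flag it: ruling out that non-real eigenvalues accumulate at a point of $\sess(\cA)\subset\RR$. Your plan---show that such points are spectral points of definite type for the Krein-space self-adjoint operator $\cA$---is the right one and is what \cite{jatr} carries out, but it is not free: you need to verify, for each $\mu\in\sess(\cA)$, that every approximate eigensequence $(x_n)$ for $\cA$ at $\mu$ satisfies $\liminf_n [x_n,x_n]>0$ (or $<0$). In the present setting this is checked by writing $x_n=\smvect{u_n}{v_n}$, using $(\cA-\mu)x_n\to0$ to couple $u_n$ and $v_n$, and then exploiting the explicit form of $J$ together with the compactness from \textbf{(F3)} to control the sign. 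Once definiteness is established, the standard Krein-space fact that non-real spectrum cannot accumulate at a spectral point of definite type closes the argument. So your outline is sound, but this step needs to be executed rather than asserted.
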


\begin{proof}
The equality in \eqref{esssA} was proved in \cite[Theorem~4.1]{jatr}.
Relation \eqref{Wuppi2} implies \eqref{esssA2}, and \eqref{esssA3}
follows from assumption \textbf{(F2)}.
The discreteness of the spectrum in $\CC\setminus\sess(\cA)$ follows from Fredholm
theory and the fact that $\CC\setminus\sess(\cA)$ is a connected set and has non-empty
intersection with $\rho(\cA)$, namely $0\in\rho(\cA)\cap(\CC\setminus\sess(\cA))$
by \eqref{NaDann}.
Corollary~5.2 in \cite{jatr} implies that no point
from $\sess(\cA)$ is an accumulation point of the non-real spectrum of $\cA$,
which shows that the non-real spectrum has no finite accumulation point.
The remaining assertions are clear.
\end{proof}

Note that, although $A_0^{-1}$ is compact, the operator $\mathcal A^{-1}$
is in general not a compact operator in $\cH$.
In fact, $\mathcal A^{-1}$ is compact if and only if the operator $D$ is compact
as an operator acting from $H_{\frac{1}{2}}$ into $H_{-\frac{1}{2}}$;
see \cite[Lemma~3.2]{T}.

% *******************************************************************
\section{A quadratic operator polynomial}
\label{sec:poly}
% *******************************************************************

In the following we construct a quadratic operator polynomial $T(\lambda)$
that is connected with the operator $\mathcal A$ and also the differential
equation \eqref{sys}.
Throughout this section let $\fra_0$ and $\frd$ be sesquilinear forms that
satisfy \textbf{(F1)}--\textbf{(F3)} from Section~\ref{sec:framework}.
Moreover, let the operators $A_0$, $D$, $\cA$ and the numbers
$\delta$, $\gamma$, $\delta_0$, $\gamma_0$ be as in Section~\ref{sec:framework}.
It follows from \eqref{deltainf} and \eqref{gammasup} that
\begin{equation}\label{inequ_a0_d}
  \delta\fra_0[x] \le \frd[x] \le \gamma\fra_0[x], \qquad x\in H_{\frac{1}{2}}.
\end{equation}

Before we define the operator polynomial $T(\lambda)$,
we need two lemmas.

% --------------------------------------------------------------------
\begin{lemma}\label{lemeps}
Let $R$ be a compact  operator in $H$ and $\eps$ an arbitrary positive number.
Then there exists a constant $C\ge0$ such that
\[
  \|RA_0^{1/2}x\|^2 \le \eps\|A_0^{1/2}x\|^2 + C\|x\|^2
  \qquad\text{for all }x\in H_{\frac12}.
\]
\end{lemma}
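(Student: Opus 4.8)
The plan is first to remove the operator $A_0^{1/2}$ from the argument of $R$ by the substitution $y=A_0^{1/2}x$. Since $A_0^{1/2}$ is an isomorphism from $H_{\frac12}$ onto $H$, as $x$ runs through $H_{\frac12}$ the vector $y$ runs through all of $H$, and one has $\|A_0^{1/2}x\|=\|y\|$ together with $\|x\|=\|A_0^{-1/2}y\|$ because $A_0^{-1/2}y=x$. Hence the assertion is equivalent to the statement that for every $\eps>0$ there is a $C\ge0$ with
\[
  \|Ry\|^2 \le \eps\|y\|^2 + C\bigl\|A_0^{-1/2}y\bigr\|^2
  \qquad\text{for all } y\in H,
\]
and it is this inequality that I would establish.

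To do so I would exploit \textbf{(F3)}: since $A_0^{-1}$ is compact, $A_0$ has purely discrete spectrum, so there is an orthonormal basis $(e_n)_{n\in\NN}$ of $H$ consisting of eigenvectors of $A_0$ with eigenvalues $0<\mu_1\le\mu_2\le\cdots\to\infty$. Let $P_N$ be the orthogonal projection onto the span of $e_1,\dots,e_N$ and put $Q_N\defeq I-P_N$. Splitting $y=P_Ny+Q_Ny$ and using $\|Ry\|^2\le 2\|RP_Ny\|^2+2\|RQ_Ny\|^2$, I would estimate the two terms separately, choosing $N$ large in the tail step and then keeping it fixed in the head step.

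For the tail term the key point is that $\|RQ_N\|\to0$ in operator norm as $N\to\infty$; I can then pick $N$ with $2\|RQ_N\|^2\le\eps$, so that $2\|RQ_Ny\|^2\le\eps\|y\|^2$. For the head term, a direct spectral computation gives $\|P_Ny\|^2=\sum_{n\le N}|\langle y,e_n\rangle|^2\le\mu_N\sum_{n\le N}\mu_n^{-1}|\langle y,e_n\rangle|^2\le\mu_N\|A_0^{-1/2}y\|^2$, whence $2\|RP_Ny\|^2\le 2\|R\|^2\mu_N\|A_0^{-1/2}y\|^2$; setting $C\defeq2\|R\|^2\mu_N$ and translating back via $\|A_0^{-1/2}y\|=\|x\|$ yields the claim. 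The main obstacle is precisely the operator-norm convergence $\|RQ_N\|\to0$: strong convergence $Q_N\to0$ alone is not enough, and it is the compactness of $R$ that is essential here. I would justify it by passing to adjoints, $\|RQ_N\|=\|Q_NR^*\|$, and observing that $R^*$ maps the unit ball to a precompact set, on which the strong null-convergence of $Q_N$ is automatically uniform.

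As a shorter and more conceptual alternative, the reduced inequality can be proved by contradiction: if it failed for some $\eps>0$, one would obtain vectors $y_n$, normalized so that $\|Ry_n\|=1$, with $\|y_n\|$ bounded and $\|A_0^{-1/2}y_n\|\to0$; a weakly convergent subsequence $y_{n_k}\rightharpoonup y_0$ would then satisfy $\|Ry_0\|=1$ by compactness of $R$, while $A_0^{-1/2}y_0=0$ forces $y_0=0$ by injectivity of $A_0^{-1/2}$, a contradiction. I would most likely present the constructive version, since it delivers an explicit constant $C=2\|R\|^2\mu_N$.
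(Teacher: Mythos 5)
Your proof is correct, but it takes a genuinely different route from the paper's. The paper disposes of the lemma in three lines by quoting abstract perturbation theory: since $RA_0^{1/2}A_0^{-1/2}=R$ is compact, $RA_0^{1/2}$ is $A_0^{1/2}$-compact, and a relatively compact perturbation of a self-adjoint operator has relative bound zero (Edmunds--Evans, Corollary III.7.7, together with Kato, \S V.4.1), which is precisely the asserted inequality. You instead reduce the claim to $\|Ry\|^2\le\eps\|y\|^2+C\bigl\|A_0^{-1/2}y\bigr\|^2$ for all $y\in H$ and prove it by hand, splitting $y$ along the eigenbasis of $A_0$; the tail estimate rests on $\|RQ_N\|=\|Q_NR^*\|\to0$, which you correctly derive from the precompactness of the image of the unit ball under $R^*$, and the head estimate $\|P_Ny\|^2\le\mu_N\|A_0^{-1/2}y\|^2$ is a correct spectral computation. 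What your constructive argument buys is self-containedness and an explicit constant $C=2\|R\|^2\mu_N$; what it costs is reliance on \textbf{(F3)} (discreteness of the spectrum of $A_0$), which the paper's citation-based proof does not need --- the lemma as stated holds for any positive definite self-adjoint $A_0$. Since \textbf{(F3)} is in force throughout the section this is harmless here, and your alternative contradiction argument (which uses only compactness of $R$ and injectivity of $A_0^{-1/2}$) recovers the full generality, so either version is acceptable.
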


\begin{proof}
The operator $RA_0^{1/2}A_0^{-1/2}=R$ is a compact operator in $H$.
Hence $RA_0^{1/2}$ is $A_0^{1/2}$-compact; see, e.g.\ \cite[Section~IV.1.3]{K}.
By \cite[Corollary III.7.7]{EE},  $RA_0^{1/2}$ has $A_0^{1/2}$-bound $0$,
which implies the assertion (see \cite[\S V.4.1]{K}).
\end{proof}

%Before we define the operator polynomial $T(\lambda)$,
%we show that the form $\lambda\frd$ is relatively bounded with respect
%to the form $\fra_0$ with $\fra_0$-bound less than $1$ if $|\lambda|$ is small enough.
Define the following set, on which the operator polynomial $T(\lambda)$ will be defined:
\begin{equation}\label{defPhigamma0}
  \Phi_{\gamma_0} \defeq \begin{cases}
    \biggl\{z\in\mathbb{C} \Bigm| |z|<\dfrac{1}{\gamma_0}\biggr\} & \text{if } \gamma_0 \ne 0, \\[2ex]
    \mathbb C & \text{if } \gamma_0 = 0.
  \end{cases}
\end{equation}

% --------------------------------------------------------------------
\begin{lemma}\label{mnx}
For $\lambda\in\Phi_{\gamma_0}$ the form $\lambda\frd$ is
relatively bounded with respect to $\fra_0$ with $\fra_0$-bound less than $1$,
i.e.\ there exist real constants $C_1,C_2$ with $C_1\ge0$, $0\le C_2<1$ such that
\begin{equation*}
  \bigl|\lambda\frd[x]\bigr| \le C_1\|x\|^2 + C_2\fra_0[x] \qquad
  \text{for all } x\in H_{\frac12}=\cD(\fra_0).
\end{equation*}
\end{lemma}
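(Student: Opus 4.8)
The plan is to pass to the bounded self-adjoint operator $B\defeq A_0^{-1/2}DA_0^{-1/2}$ on $H$ and to split off the finitely many eigenvalues lying above $\gamma_0$. First I would record that, by \eqref{reprD}, for $x\in H_{\frac12}$ one has $\frd[x]=\langle B A_0^{1/2}x,A_0^{1/2}x\rangle$ while $\fra_0[x]=\|A_0^{1/2}x\|^2$, and that $\frd[x]\ge 0$, so $|\lambda\frd[x]|=|\lambda|\frd[x]$. The naive estimate $\frd[x]\le\gamma\fra_0[x]$ from \eqref{inequ_a0_d} only yields $\fra_0$-bound $|\lambda|\gamma$, which need not be smaller than $1$ on the whole disc $\Phi_{\gamma_0}$, since in general $\gamma>\gamma_0$; the heart of the matter is therefore to treat the discrete spectrum of $B$ in $(\gamma_0,\gamma]$ separately.

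Next I would fix $\lambda\in\Phi_{\gamma_0}$; if $\gamma_0\ne0$ then $|\lambda|\gamma_0<1$ by \eqref{defPhigamma0}. Since $B\ge0$ and $\max\sess(B)=\gamma_0$ by \eqref{Wuppi2} and \eqref{Kuchen}, the spectrum of $B$ in $(\gamma_0,\infty)$ consists of isolated eigenvalues of finite multiplicity that can accumulate only at $\gamma_0$. Hence for every $\eps>0$ the spectral projection $E_\eps\defeq\chi_{(\gamma_0+\eps,\infty)}(B)$ has finite rank. Writing $B=B(I-E_\eps)+BE_\eps$, the first summand satisfies $\langle B(I-E_\eps)u,u\rangle\le(\gamma_0+\eps)\|u\|^2$, while $K\defeq BE_\eps$ is a finite-rank, self-adjoint, non-negative operator. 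With $u=A_0^{1/2}x$ this gives $\frd[x]\le(\gamma_0+\eps)\fra_0[x]+\langle K A_0^{1/2}x,A_0^{1/2}x\rangle=(\gamma_0+\eps)\fra_0[x]+\|K^{1/2}A_0^{1/2}x\|^2$.

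Now I would apply Lemma~\ref{lemeps} to the compact (indeed finite-rank) operator $K^{1/2}$: for any prescribed $\eps'>0$ there is $C\ge0$ with $\|K^{1/2}A_0^{1/2}x\|^2\le\eps'\fra_0[x]+C\|x\|^2$. Combining the two displays and multiplying by $|\lambda|$ yields $|\lambda\frd[x]|\le|\lambda|(\gamma_0+\eps+\eps')\fra_0[x]+|\lambda|C\|x\|^2$. Since $|\lambda|\gamma_0<1$ (and trivially when $\gamma_0=0$, where $\Phi_{\gamma_0}=\CC$ but $|\lambda|$ is a fixed finite number), I can choose $\eps,\eps'>0$ so small that $C_2\defeq|\lambda|(\gamma_0+\eps+\eps')<1$; setting $C_1\defeq|\lambda|C$ gives the claim. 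The case $\gamma_0=0$ (in particular $H$ finite-dimensional, cf.\ \eqref{Elbersfeldberg}) is covered by the same argument, where $E_\eps$ eventually captures all positive eigenvalues of $B$.

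The main obstacle is precisely the discrepancy $\gamma>\gamma_0$: a single global bound $\frd\le\gamma\fra_0$ is too weak on $\Phi_{\gamma_0}$, and the decisive step is the finite-rank spectral truncation that isolates the large eigenvalues of $B$ so that the compactness encoded in Lemma~\ref{lemeps} can absorb them into an arbitrarily small multiple of $\fra_0[x]$ plus a multiple of $\|x\|^2$.
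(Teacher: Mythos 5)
Your argument is correct and is essentially the paper's own proof: both split $B=A_0^{-1/2}DA_0^{-1/2}$ by the spectral projection onto $(\gamma',\infty)$ with $\gamma_0<\gamma'<\frac{1}{|\lambda|}$ (your $\gamma'=\gamma_0+\eps$), bound the low part by $\gamma'\fra_0[x]$, and absorb the finite-rank high part via Lemma~\ref{lemeps}; your $\|K^{1/2}A_0^{1/2}x\|^2$ is exactly the paper's $\|S^{1/2}E((\gamma',\infty))A_0^{1/2}x\|^2$. No gaps.
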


\begin{proof}
Obviously, for $\lambda=0$ the assertion of Lemma \ref{mnx} is true.
Let $\lambda\in\Phi_{\gamma_0}\setminus\{0\}$ and choose $\gamma'\in\RR$ such that
$\gamma_0<\gamma'<\frac{1}{|\lambda|}$.
Denote by $E$ the spectral function in $H$ corresponding to the bounded self-adjoint
operator $S \defeq A_0^{-1/2}DA_0^{-1/2}$.
Then, for $x\in H_{\frac12}$, we have
\begin{align*}
  \bigl|\frd[x]\bigr|
  &= \langle Dx,x\rangle_{H_{-\frac12}\times H_{\frac12}}
  = \langle A_0^{-1/2}Dx,A_0^{1/2}x\rangle
  = \langle S A_0^{1/2}x,A_0^{1/2}x\rangle \\[1ex]
  &= \bigl\langle S E([0,\gamma'])A_0^{1/2}x,E([0,\gamma'])A_0^{1/2}x\bigr\rangle \\[1ex]
    &\quad + \bigl\langle S E((\gamma',\infty))A_0^{1/2}x,E((\gamma',\infty))A_0^{1/2}x\bigr\rangle \\[1ex]
  &\le \gamma'\bigl\|E([0,\gamma'])A_0^{1/2}x\bigr\|^2
    + \bigl\|S^{1/2}E((\gamma',\infty))A_0^{1/2}x\bigr\|^2 \\[1ex]
  &\le \gamma'\|A_0^{1/2}x\|^2
    + \bigl\|S^{1/2}E((\gamma',\infty))A_0^{1/2}x\bigr\|^2.
\end{align*}
By the definition of $\gamma_0$ and the fact that $\gamma'>\gamma_0$ it follows
that $E((\gamma',\infty))$ is a finite rank projection.
Choose $\eps>0$ such that $|\lambda|(\gamma'+\eps)<1$, which is possible
because $\gamma'<\frac{1}{|\lambda|}$.
Then Lemma~\ref{lemeps} applied to the finite rank operator $S^{1/2}E((\gamma',\infty))$
implies that there exists a $C\ge0$ such that
\begin{align*}
  \bigl|\lambda\frd[x]\bigr|
  &\le |\lambda|\gamma'\|A_0^{1/2}x\|^2
  + |\lambda|\Bigl(\eps\|A_0^{1/2}x\|^2 + C\|x\|^2\Bigr) \\[1ex]
  &= |\lambda|(\gamma'+\eps)\fra_0[x] + |\lambda|C\|x\|^2,
\end{align*}
which shows that $\lambda\frd$ is $\fra_0$-bounded with $\fra_0$-bound less than $1$.
\end{proof}

For $\lambda\in\CC$ we define the sesquilinear form $\frt(\lambda)$ with
domain $\cD(\frt(\lambda))=H_{\frac{1}{2}}$ by
\begin{equation}\label{Wuppi5}
  \frt(\lambda)[x,y] \defeq \lambda^2\langle x,y\rangle + \lambda\frd[x,y] + \fra_0[x,y]
  \qquad x,y\in H_{\frac12},
\end{equation}
and the corresponding quadratic form by $\frt(\lambda)[x] \defeq \frt(\lambda)[x,x]$
for $x\in H_{\frac12}$.
Note that if a function of the form $z(t)=e^{\lambda t}x$ with $x\in H_{\frac12}$
is plugged into \eqref{sys}, then one obtains the equation $\frt(\lambda)[x,y]=0$.
Using \eqref{a0A0} and \eqref{reprD} we can rewrite $\frt(\lambda)$ as follows:
\begin{equation}
  \frt(\lambda)[x,y] \defeq \bigl\langle \lambda^2 x+\lambda Dx+A_0x,y
  \bigr\rangle_{H_{-\frac12}\times H_{\frac12}}
  \qquad x,y\in H_{\frac12}.
\end{equation}
In the next proposition we introduce the representing operators $T(\lambda)$
for $\lambda\in\Phi_{\gamma_0}$ and state some of their properties.

% --------------------------------------------------------------------
\begin{proposition}\label{HotelCentral}
For $\lambda\in\Phi_{\gamma_0}$ the form $\frt(\lambda)$ with
domain $\cD(\frt(\lambda))=H_{\frac{1}{2}}$
is a closed sectorial form in $H$.  The m-sectorial operator $T(\lambda)$ in
$H$  that is
associated with $\frt(\lambda)$ is given by
\begin{align*}
  \dom(T(\lambda)) &=  \left\{ x \in H_{\frac12} \mid \lambda Dx + A_0x
  \in H \right\},
  \\[1ex]
  T(\lambda)x &= \lambda^2 x + \lambda D x + A_0x, \hspace*{15ex} x\in\dom(T(\lambda)).
\end{align*}
The family $T(\lambda)$, $\lambda \in \Phi_{\gamma_0}$, of m-sectorial operators
is a holomorphic family of type {\rm(B)}, which satisfies $T(\overline\lambda)=T(\lambda)^*$
for $\lambda\in\Phi_{\gamma_0}$.
For $\lambda\in\Phi_{\gamma_0}\cap\RR$ the operators $T(\lambda)$ are self-adjoint
and bounded from below.
\end{proposition}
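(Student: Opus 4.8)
The plan is to realize $\frt(\lambda)$ as a small perturbation of the closed symmetric positive form $\fra_0$ and to invoke the perturbation theory for sectorial forms. Write $\frt(\lambda)=\fra_0+\frs(\lambda)$, where $\frs(\lambda)[x,y]\defeq\lambda^2\langle x,y\rangle+\lambda\frd[x,y]$ is the perturbing form. By Lemma~\ref{mnx} the term $\lambda\frd$ is $\fra_0$-bounded with $\fra_0$-bound $C_2<1$, i.e.\ $|\lambda\frd[x]|\le C_1\|x\|^2+C_2\fra_0[x]$, while the term $\lambda^2\langle\,\cdot\,,\cdot\,\rangle$ is bounded, hence $\fra_0$-bounded with bound $0$ (recall $\fra_0[x]\ge c_1\|x\|^2$). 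Combining, $|\frs(\lambda)[x]|\le(C_1+|\lambda|^2)\|x\|^2+C_2\fra_0[x]$ for all $x\in H_{\frac12}$, so $\frs(\lambda)$ is $\fra_0$-bounded with relative bound strictly less than $1$. Since $\fra_0$ is densely defined, closed, and (being symmetric and bounded below by $c_1>0$) sectorial, the perturbation theorem for sectorial forms \cite[Theorem~VI.1.33]{K} shows at once that $\frt(\lambda)$ is closed and sectorial with $\cD(\frt(\lambda))=H_{\frac12}$. This first step, resting entirely on Lemma~\ref{mnx}, is the crux of the argument; the remaining assertions are then bookkeeping through the representation theorems.

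Next I would identify the associated operator. The First Representation Theorem \cite[Theorem~VI.2.1]{K} attaches to the closed sectorial form $\frt(\lambda)$ a unique m-sectorial operator $T(\lambda)$, characterized by: $x\in\dom(T(\lambda))$ if and only if $x\in H_{\frac12}$ and there is a $v\in H$ with $\frt(\lambda)[x,y]=\langle v,y\rangle$ for all $y\in H_{\frac12}$, in which case $T(\lambda)x=v$. Using the dual-pairing representation $\frt(\lambda)[x,y]=\langle\lambda^2 x+\lambda Dx+A_0x,y\rangle_{H_{-\frac12}\times H_{\frac12}}$ together with \eqref{Wuppi1}, the existence of such a $v\in H$ is equivalent to $\lambda^2 x+\lambda Dx+A_0x\in H$; since $x\in H_{\frac12}\subset H$ already yields $\lambda^2 x\in H$, this reduces to $\lambda Dx+A_0x\in H$, and then $T(\lambda)x=\lambda^2 x+\lambda Dx+A_0x$. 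This is precisely the stated domain and action.

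For the holomorphy I would check the three conditions in the definition of a holomorphic family of type~(B): m-sectoriality of each $T(\lambda)$ on $\Phi_{\gamma_0}$ has just been established; the form domain $\cD(\frt(\lambda))=H_{\frac12}$ is independent of $\lambda$; and for fixed $x\in H_{\frac12}$ the scalar function $\lambda\mapsto\frt(\lambda)[x]=\lambda^2\|x\|^2+\lambda\frd[x]+\fra_0[x]$ is a quadratic polynomial in $\lambda$, hence entire. The identity $T(\overline\lambda)=T(\lambda)^*$ follows from the corresponding relation for forms: since $\langle\,\cdot\,,\cdot\,\rangle$, $\frd$ and $\fra_0$ are symmetric, one has $\overline{\frt(\lambda)[y,x]}=\frt(\overline\lambda)[x,y]$, so the adjoint form of $\frt(\lambda)$ equals $\frt(\overline\lambda)$; passing to the associated operators via \cite[Theorem~VI.2.5]{K} gives $T(\lambda)^*=T(\overline\lambda)$.

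Finally, for $\lambda\in\Phi_{\gamma_0}\cap\RR$ we have $\overline\lambda=\lambda$, whence $T(\lambda)^*=T(\lambda)$, so $T(\lambda)$ is self-adjoint; equivalently, $\frt(\lambda)$ is then a symmetric closed sectorial form, whose associated operator is self-adjoint. Boundedness from below is immediate from sectoriality (a symmetric sectorial form has numerical range in a half-line $[z_0,\infty)$), or can be read off directly: using $\lambda\frd[x]\ge-C_1\|x\|^2-C_2\fra_0[x]$ from Lemma~\ref{mnx} gives $\frt(\lambda)[x]\ge(\lambda^2-C_1)\|x\|^2+(1-C_2)\fra_0[x]\ge(\lambda^2-C_1)\|x\|^2$, so that $T(\lambda)\ge(\lambda^2-C_1)I$.
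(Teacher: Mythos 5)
Your proposal is correct and follows essentially the same route as the paper: Lemma~\ref{mnx} combined with Kato's perturbation theorem \cite[Theorem~VI.1.33]{K} for closedness and sectoriality, the First Representation Theorem for the domain and action of $T(\lambda)$, the definition of a type~(B) family for holomorphy, and \cite[Theorem~VI.2.5]{K} for $T(\overline\lambda)=T(\lambda)^*$ and the self-adjointness for real $\lambda$. The extra detail you supply (absorbing the bounded term $\lambda^2\langle\,\cdot\,,\cdot\,\rangle$ into the perturbation and spelling out the domain identification via the dual pairing) is exactly what the paper leaves implicit.
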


\begin{proof}
Since $\fra_0$ is a closed symmetric non-negative form and, by Lemma~\ref{mnx},
$\lambda\frd$ is bounded with respect to $\fra_0$ with $\fra_0$-bound less than 1,
it follows from \cite[Theorem~VI.1.33]{K} that $\frt(\lambda)$ is closed and sectorial
for $\lambda \in \Phi_{\gamma_0}$.
Hence by \cite[Theorem~VI.2.1]{K} there exist m-sectorial operators $T(\lambda)$
that represent the forms $\frt(\lambda)$.
The form of the domain and the action of $T(\lambda)$ follow easily
from \cite[Theorem~VI.2.1]{K}.
% Lemma~\ref{Matze}.
The domain of $\frt(\lambda)$ is independent of $\lambda$, and the analyticity
of $\lambda\mapsto\frt(\lambda)[x]$ is clear.  Hence $T$ is a holomorphic family of type (B).
Since $\frt(\overline\lambda)[x,y]=\overline{\frt(\lambda)[y,x]}$,
we have $T(\overline\lambda)=T(\lambda)^*$; see \cite[Theorem~VI.2.5]{K}.
From this we obtain also the self-adjointness of $T(\lambda)$
for $\lambda\in\Phi_{\gamma_0}\cap\RR$;
moreover, $T(\lambda)$ is bounded from below in this case since it is m-sectorial.
\end{proof}

%\noindent
Next we show that on $\Phi_{\gamma_0}$ the spectral problems for $\mathcal A$ and $T$
are equivalent.

% --------------------------------------------------------------------
\begin{proposition}\label{propdiskrho}
Consider $T$ as a function defined on $\Phi_{\gamma_0}$.
On $\Phi_{\gamma_0}$ the spectra and point spectra of $\mathcal A$ and $T$ coincide, i.e.\
\begin{equation}\label{BenianCourt}
  \sigma_{\rm p}(\mathcal A) \cap  \Phi_{\gamma_0} =
  \sigma(\mathcal A) \cap  \Phi_{\gamma_0} = \sigma(T)=
  \sigma_{\rm p}(T).
\end{equation}
For  $\lambda_0\in \sigma_{\rm p}(\mathcal A) \cap \Phi_{\gamma_0}$
the geometric multiplicities coincide:
\begin{equation}\label{dimker}
  \dim \ker (\mathcal A - \lambda_0) = \dim \ker T(\lambda_0).
\end{equation}
Moreover,
\[
  \sess(T) =\emptyset.
\]
If $\gamma_0 \ne 0$, then there are at most finitely many eigenvalues
of $\mathcal A$ {\rm(}and, hence, of $T${\rm)} in $\Phi_{\gamma_0} \setminus \RR$.
\end{proposition}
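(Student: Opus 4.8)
The plan is to reduce everything to the elementary correspondence between the eigenvalue equations of $\cA$ and $T$. First I would fix $\lambda\in\Phi_{\gamma_0}$ and write out the block action $\cA\binom{z}{w}=\binom{w}{-A_0z-Dw}$. A vector $\binom{z}{w}\in\dom(\cA)$ lies in $\ker(\cA-\lambda)$ exactly when $w=\lambda z$ and $-A_0z-Dw=\lambda w$; substituting $w=\lambda z$ turns the second equation into $\lambda^2z+\lambda Dz+A_0z=0$, while the defining side condition $A_0z+Dw\in H$ of $\dom(\cA)$ becomes $A_0z+\lambda Dz\in H$, i.e.\ $z\in\dom(T(\lambda))$ in the sense of Proposition~\ref{HotelCentral}. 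Hence $z\mapsto\binom{z}{\lambda z}$ is a linear bijection from $\ker T(\lambda)$ onto $\ker(\cA-\lambda)$, which yields at once $\sigma_{\rm p}(\cA)\cap\Phi_{\gamma_0}=\sigma_{\rm p}(T)$ and the multiplicity identity \eqref{dimker}.

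Next I would establish $\sess(T)=\emptyset$ by showing that each $T(\lambda)$, $\lambda\in\Phi_{\gamma_0}$, has compact resolvent. By Proposition~\ref{HotelCentral} the form $\frt(\lambda)$ is closed and sectorial with the fixed form domain $H_{\frac12}=\dom(A_0^{1/2})$, and since by Lemma~\ref{mnx} the perturbation $\lambda\frd$ has $\fra_0$-bound less than $1$, the form norm of $\frt(\lambda)$ is equivalent to the $H_{\frac12}$-norm. Assumption \textbf{(F3)} makes $A_0^{-1/2}$ compact, so the embedding $H_{\frac12}\hookrightarrow H$ is compact. The standard criterion for m-sectorial operators associated with such forms then gives that $T(\lambda)$ has compact resolvent, whence $\sess(T(\lambda))=\emptyset$ for every $\lambda\in\Phi_{\gamma_0}$; by the definition of $\sess(T)$ this means $\sess(T)=\emptyset$. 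Compactness of the resolvent also forces $\sigma(T(\lambda))$ to consist only of eigenvalues, so $0\in\sigma(T(\lambda))\Leftrightarrow 0\in\sigma_{\rm p}(T(\lambda))$, i.e.\ $\sigma(T)=\sigma_{\rm p}(T)$.

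To close the chain \eqref{BenianCourt} it remains to identify these sets with $\sigma(\cA)\cap\Phi_{\gamma_0}$, and here I would invoke Proposition~\ref{Ilmenau}. Since $\sess(\cA)\subset(-\infty,-\tfrac{1}{\gamma_0}]$ when $\gamma_0>0$ and $\sess(\cA)=\emptyset$ when $\gamma_0=0$, and the open disc $\Phi_{\gamma_0}$ of radius $1/\gamma_0$ contains no real point $\le-1/\gamma_0$, we get $\Phi_{\gamma_0}\cap\sess(\cA)=\emptyset$. By Proposition~\ref{Ilmenau} the spectrum of $\cA$ in $\CC\setminus\sess(\cA)$ is discrete and consists only of eigenvalues, so restricting to $\Phi_{\gamma_0}$ gives $\sigma(\cA)\cap\Phi_{\gamma_0}=\sigma_{\rm p}(\cA)\cap\Phi_{\gamma_0}$. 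Combining this with the two previous steps produces $\sigma_{\rm p}(\cA)\cap\Phi_{\gamma_0}=\sigma(\cA)\cap\Phi_{\gamma_0}=\sigma(T)=\sigma_{\rm p}(T)$.

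Finally, for the last assertion, when $\gamma_0\ne0$ the disc $\Phi_{\gamma_0}$ is bounded, so the non-real eigenvalues of $\cA$ in $\Phi_{\gamma_0}$ form a bounded subset of $\sigma(\cA)\setminus\RR$; as the latter has no finite accumulation point by Proposition~\ref{Ilmenau}, an infinite such set would, by Bolzano--Weierstrass, accumulate at a finite point of $\overline{\Phi_{\gamma_0}}$, a contradiction, so the set is finite. I expect the only genuinely delicate step to be the compact-resolvent argument for $\sess(T)=\emptyset$: one must pass carefully from compactness of $A_0^{-1}$ to compactness of the form-domain embedding and then apply the compact-resolvent criterion to the merely m-sectorial (non-self-adjoint, for non-real $\lambda$) operators $T(\lambda)$, where self-adjoint spectral tools are not directly available.
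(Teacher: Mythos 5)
Your proposal is correct, and the eigenvalue/kernel correspondence $z\mapsto\smvect{z}{\lambda z}$, the use of Proposition~\ref{Ilmenau} to get $\sess(\cA)\cap\Phi_{\gamma_0}=\emptyset$, and the boundedness argument for the last assertion all match the paper. Where you genuinely diverge is in proving $\sess(T)=\emptyset$ and $\sigma(T)=\sigma_{\rm p}(T)$. The paper argues by contradiction with a singular sequence: assuming $0\in\sess(T(\lambda))$, it first checks (via \eqref{dimker} applied at $\lambda$ and $\overline\lambda$ and $T(\overline\lambda)=T(\lambda)^*$) that kernel and co-range are finite-dimensional, extracts a singular sequence from \cite[Theorem~IX.1.3]{EE}, splits it along a finite-dimensional spectral subspace of $A_0$, and derives a lower bound on $|\langle T(\lambda)x_n,x_n\rangle|$; it then separately shows $\rho(\cA)\cap\Phi_{\gamma_0}\subset\rho(T)$ by an explicit surjectivity computation. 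You instead observe that the form norm of $\frt(\lambda)$ is equivalent to $\|A_0^{1/2}\cdot\|$ (by Lemma~\ref{mnx}) and that \textbf{(F3)} makes the embedding $H_{\frac12}\hookrightarrow H$ compact, so each $T(\lambda)$ has compact resolvent; this gives $\sess(T(\lambda))=\emptyset$ and discreteness of $\sigma(T(\lambda))$ in one stroke and makes the paper's surjectivity step unnecessary. Your route is shorter and more conceptual; its only cost is that the compact-resolvent criterion for a (non-self-adjoint) m-sectorial operator with compactly embedded form domain should be justified or referenced precisely — it follows from the factorization $(T(\lambda)-\mu)^{-1}=\iota\circ B$ with $\Re\mu$ sufficiently negative, where $B:H\to H_{\frac12}$ is bounded by the coercivity estimate $\Re\frt(\lambda)[u]\ge(1-C_2)\fra_0[u]-C\|u\|^2$ and $\iota:H_{\frac12}\to H$ is compact — so you should include that one line rather than appeal to an unnamed ``standard criterion''. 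The paper's argument, by contrast, is self-contained modulo the cited singular-sequence characterization, at the price of being considerably longer.
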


\begin{proof}
First we show equality of the point spectra of $\mathcal A$ and $T$. For this,
let $\lambda\in\Phi_{\gamma_0}$ and assume that
$0\in \sigma_p(T(\lambda))$. Then there exists $x\in \dom (T(\lambda))\setminus\{0\}$
with $\lambda^2x+\lambda Dx +A_0x=0$.
Therefore
$\smvect{x}{\lambda x} \in \dom (\mathcal A)$ and
\[
  (\mathcal A -\lambda)\vect{x}{\lambda x} = 0.
\]
Conversely, if $\lambda \in \sigma_p(\mathcal A)$ and if
$\smvect{x}{y} \in \dom (\mathcal A)$
is a corresponding eigenvector, one concludes that
\begin{equation}\label{Steingasse}
  y=\lambda x \quad \text{and} \quad A_0x +  Dy + \lambda y=0.
\end{equation}
Hence $x\in \dom (T(\lambda))$ and $T(\lambda)x=0$ with $x\ne0$ because otherwise,
$\smvect{x}{y}=0$.
Therefore the point spectra of $\mathcal A$ and $T$ coincide in $\Phi_{\gamma_0}$.
Moreover, as the first component of an eigenvector
$\smvect{x}{\lambda x} \in \dom (\mathcal A)$
of $\mathcal A$ satisfies  $x \in \dom (T(\lambda))$ and $T(\lambda)x=0$ and vice versa,
the statement on the geometric multiplicities follows.

Next assume that $\lambda\in\rho(\mathcal A) \cap \Phi_{\gamma_0}$. Then for
$g\in H$ there exists $\smvect{x}{y} \in \dom (\mathcal A)$ with
\[
  (\mathcal A -\lambda)\vect{x}{y} = \vect{0}{g}.
\]
From this one concludes that
\[
  y=\lambda x \quad \text{and} \quad A_0x +  Dy + \lambda y=g,
\]
which shows that $x\in  \dom (T(\lambda))$ and $T(\lambda)x=g$.
Hence $T(\lambda)$ is surjective and, by the already proved statement about
the eigenvalues, $\lambda\in\rho(T)$.
Proposition~\ref{Ilmenau} implies that
$\sess(\mathcal A) \cap \Phi_{\gamma_0} = \emptyset$ which, together with
$0\in\rho(\mathcal A)$ (see Proposition~\ref{prop:cont}), gives the first
equality in \eqref{BenianCourt}. Hence each point $\lambda$ in $\Phi_{\gamma_0}$
is either an eigenvalue of $\mathcal A$ and, hence, of $T$, or belongs to
the resolvent set of $\mathcal A$ and hence of $T$.
This proves \eqref{BenianCourt}.

We show the statement about the essential spectrum of $T$.
Let $\lambda\in\Phi_{\gamma_0}$.
The statement is obvious for finite-dimensional $H$; hence let
$H$ be infinite-dimensional.
By Lemma~\ref{mnx} there exist constants $a,b$ such that $a\ge0$, $0\le b<1$ and
\[
  \bigl|\lambda \frd [x] \bigr|
  \le a\|x\|^2 + b\fra_0 [x], \quad x\in
  H_{\frac{1}{2}}.
\]
Denote by $L$ the spectral subspace for $A_0$ corresponding to the
interval $\left[ 0, \frac{|\lambda|^2 +a}{1-b} +1\right]$.
Assume that $0\in\sess(T(\lambda))$.
It follows from Proposition~\ref{Ilmenau} and the definition of $\Phi_{\gamma_0}$
that $\lambda\notin\sess(\cA)$ and $\overline\lambda\notin\sess(\cA)$.
Hence \eqref{dimker} implies that $\dim\ker T(\lambda)<\infty$ and
\[
  \dim\bigl(\ran T(\lambda)\bigr)^\perp = \dim\bigl(\ker T(\lambda)^*\bigr)
  = \dim\ker T(\overline\lambda) < \infty.
\]
By \cite[Theorem~IX.1.3]{EE} there exists a singular
sequence $(x_n)_{n\in\mathbb N}$ with $x_n\in\mathcal D(T(\lambda))$,
$\|x_n\|=1$, $x_n \rightharpoonup 0$ (i.e.\ $x_n$ converges to $0$ weakly) and
$T(\lambda)x_n\to0$ as $n\to\infty$.  We decompose $x_n$ as follows:
\[
  x_n = u_n + v_n, \qquad u_n\in L,\; v_n\perp L.
\]
The projection onto $L$ is weakly continuous and $L$ is finite-dimensional
by assumption \textbf{(F3)};
therefore the sequence $(u_n)_{n\in \mathbb N}$  converges
strongly in $H$ to $0$, $A_0 u_n \to 0$ and $\|v_n\| \to 1$
as $n\to \infty$.  We obtain
\begin{align*}
  \bigl|\langle T(\lambda)x_n , x_n\rangle\bigr|
  & = \bigl|\frt (\lambda) [x_n]\bigr|
  = \bigl|\lambda^2 + \lambda \frd [x_n] + \fra_0 [x_n]\bigr| \\[1ex]
  & \ge \fra_0 [x_n] - \bigl(|\lambda^2| + |\lambda \frd [x_n]|\bigr)
  \ge (1-b)\fra_0 [x_n] - (|\lambda^2| + a)\\[1ex]
  & =  (1-b)\biggl(\fra_0 [u_n] + \fra_0 [v_n]
  - \frac{|\lambda|^2 + a}{1-b}
  \biggr).
\end{align*}
As $n \to \infty$, we have $\fra_0 [u_n]\to 0$, and
$\fra_0 [v_n] \ge \bigl(\frac{|\lambda|^2 +a}{1-b} +1\bigr)\|v_n\|^2$ holds
for every $n\in\NN$.
Hence
\[
  \liminf_{n\to \infty} \bigl|\langle T(\lambda)x_n , x_n\rangle\bigr|  \geq (1-b) >0,
\]
which is a contradiction.  Therefore $0\notin\sess(T(\lambda))$.

Finally, assume that $\gamma_0>0$.
Suppose that there are infinitely many eigenvalues of $\mathcal A$ in
$\Phi_{\gamma_0}\setminus \mathbb R$.  Since $\Phi_{\gamma_0}\setminus \mathbb R$
is a bounded set, there exists a sequence
of non-real eigenvalues of $\mathcal A$ which converges.
However, this contradicts Proposition~\ref{Ilmenau}.  Hence the last statement
is proved.
\end{proof}

In the following we prove variational principles for real eigenvalues
of $\mathcal A$ or, what is equivalent (see Proposition \ref{propdiskrho}), of $T$.
To this end we introduce functionals $p_+$ and $p_-$ so that $p_+$ serves
as generalized Rayleigh functional for $T$ on appropriate intervals.
For fixed $x\in H_{\frac{1}{2}}\backslash\{0\}$ consider the equation
\begin{equation}\label{scalar_qu_eq0}
  \frt(\lambda)[x] = \lambda^2\|x\|^2 + \lambda\frd[x] + \fra_0[x] = 0
\end{equation}
as an equation in $\lambda$.

\begin{definition}\label{Gustav2AAA}
If \eqref{scalar_qu_eq0} for $x\in H_{\frac12}\setminus\{0\}$
 does not have a real solution, then we set
\[
  p_+(x) \defeq -\infty, \qquad p_-(x) \defeq +\infty.
\]
Otherwise, we denote the solutions of \eqref{scalar_qu_eq0} by $p_\pm(x)$:
\begin{equation}\label{scalar_qu_eq17}
\begin{aligned}
  p_\pm(x)
  \defequ \frac{-\frd[x] \pm \sqrt{\bigl(\frd[x]\bigr)^2 - 4\|x\|^2\fra_0[x]}\,}{2\|x\|^2}
  \\[1ex]
  &= \frac{-\langle D x,x\rangle_{H_{-\frac{1}{2}}\times
  H_{\frac{1}{2}}} \pm \sqrt{\langle D x,x\rangle_{H_{-\frac{1}{2}}\times
  H_{\frac{1}{2}}}^2-4 \|x\|^2 \|A_0^{1/2} x\|^2}\,}{2 \|x\|^2}\,.
\end{aligned}
\end{equation}
Note that the values of $p_+(x)$ and $p_-(x)$ belong to $(-\infty,0)\cup\{\pm\infty\}$. Set
\begin{align}
  \cD^* \defequ \bigl\{x\in H_{\frac{1}{2}}\setminus\{0\}\mid
  \exists\,\lambda\in\RR \text{ such that } \frt(\lambda)[x]=0\bigr\} \notag\\[1ex]
  &= \bigl\{x\in H_{\frac12}\setminus\{0\} \mid p_\pm(x) \text{ are finite}\bigr\}
  \notag\\[1ex]
  &= \Bigl\{x\in H_{\frac12}\setminus\{0\} \mid
  \frd[x] \ge 2 \|x\|\,\sqrt{\fra_0[x]}\Bigr\}
  \label{Dstar}
\end{align}
and define
\begin{equation}\label{def_alpha}
  \alpha \defeq \begin{cases}
    \displaystyle \max\biggl\{\sup_{x\in\cD^*} p_-(x), -\frac{1}{\gamma _0}\biggr\}
    & \text{if } \gamma_0 > 0, \\[3ex]
    \displaystyle \sup_{x\in\cD^*} p_-(x)
    & \text{if } \gamma_0 = 0,
  \end{cases}
\end{equation}
where we set $\sup_{x\in\cD^*}p_-(x)=-\infty$
if $\cD^*=\emptyset$.
\end{definition}

We collect some of the properties of $p_+$, $p_-$ and $\alpha$
in the following lemma.
Note that $\gamma>0$ if and only if $\frd\ne0$.

\begin{lemma}\label{lem5.5}
Assume that $\frd\ne0$.  Then
\[
  p_\pm(x) < -\frac{1}{\gamma} \qquad\text{for}\;\; x\in\cD^*,
\]
and hence
\[
  \alpha \le -\frac{1}{\gamma}.
\]
\end{lemma}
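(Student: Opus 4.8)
The plan is to treat, for a fixed $x\in\cD^*$, the scalar polynomial $f(\lambda)\defeq\frt(\lambda)[x]=\|x\|^2\lambda^2+\frd[x]\lambda+\fra_0[x]$ as an upward-opening real quadratic: its leading coefficient $\|x\|^2$ is strictly positive, and by the definition of $\cD^*$ its two real roots are precisely $p_-(x)\le p_+(x)$. The assertion $p_\pm(x)<-\tfrac1\gamma$ then amounts to showing that the point $-\tfrac1\gamma$ lies strictly to the right of both roots. Since $f$ opens upward and has real roots, this is equivalent to the pair of inequalities $f\bigl(-\tfrac1\gamma\bigr)>0$ and $f'\bigl(-\tfrac1\gamma\bigr)>0$, and the entire proof reduces to verifying these two, using only \eqref{inequ_a0_d} and the description \eqref{Dstar} of $\cD^*$.

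The first inequality is straightforward: substituting and using $\frd[x]\le\gamma\fra_0[x]$ from \eqref{inequ_a0_d} gives $f\bigl(-\tfrac1\gamma\bigr)=\tfrac{\|x\|^2}{\gamma^2}-\tfrac{\frd[x]}{\gamma}+\fra_0[x]\ge\tfrac{\|x\|^2}{\gamma^2}>0$, because $-\tfrac{\frd[x]}{\gamma}+\fra_0[x]\ge0$. The derivative condition, with $f'(\lambda)=2\|x\|^2\lambda+\frd[x]$, requires $f'\bigl(-\tfrac1\gamma\bigr)=\frd[x]-\tfrac{2\|x\|^2}{\gamma}>0$, i.e.\ $\gamma\frd[x]>2\|x\|^2$, and this is the step where membership in $\cD^*$ is essential. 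I would combine the characterization $\frd[x]\ge2\|x\|\sqrt{\fra_0[x]}$ from \eqref{Dstar} with $\fra_0[x]\ge\frac{\frd[x]}{\gamma}$ (again \eqref{inequ_a0_d}) to obtain $\frd[x]\ge2\|x\|\sqrt{\frd[x]/\gamma}$. Since $\frd[x]>0$ on $\cD^*$ (the nonnegative discriminant gives $(\frd[x])^2\ge4\|x\|^2\fra_0[x]>0$, using $\fra_0[x]>0$ from \textbf{(F1)}), one may divide by $\sqrt{\frd[x]}$ to get $\frd[x]\ge\frac{4\|x\|^2}{\gamma}$, whence $f'\bigl(-\tfrac1\gamma\bigr)\ge\frac{2\|x\|^2}{\gamma}>0$. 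I expect this extraction of the sharp bound $\frd[x]\ge4\|x\|^2/\gamma$ from the two estimates for $\cD^*$ to be the only non-routine point.

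With both $f\bigl(-\tfrac1\gamma\bigr)>0$ and $f'\bigl(-\tfrac1\gamma\bigr)>0$ established, $-\tfrac1\gamma$ lies beyond the larger root, so $p_+(x)<-\tfrac1\gamma$ and therefore $p_-(x)\le p_+(x)<-\tfrac1\gamma$; this proves the first claim, and taking the supremum over $x\in\cD^*$ yields $\sup_{x\in\cD^*}p_-(x)\le-\tfrac1\gamma$. To conclude $\alpha\le-\tfrac1\gamma$ I would distinguish the two cases in \eqref{def_alpha}. If $\gamma_0=0$ the bound is immediate. If $\gamma_0>0$, then $H$ is infinite-dimensional, so $\gamma_0=\max\sess\bigl(A_0^{-1/2}DA_0^{-1/2}\bigr)\le\max\sigma\bigl(A_0^{-1/2}DA_0^{-1/2}\bigr)=\gamma$ by \eqref{Elbersfeld} and \eqref{Kuchen}; hence $-\tfrac1{\gamma_0}\le-\tfrac1\gamma$, and the maximum defining $\alpha$ remains bounded above by $-\tfrac1\gamma$.
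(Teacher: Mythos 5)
Your proof is correct, but it takes a different route from the paper's. The paper simply shows that $\frt(\lambda)[x]>0$ for \emph{all} $\lambda\in\bigl[-\frac{1}{\gamma},\infty\bigr)$ and all $x\in H_{\frac12}\setminus\{0\}$: for $\lambda\ge-\frac1\gamma$ one has $\lambda\frd[x]\ge-\frac{\frd[x]}{\gamma}$ and then $-\frac{\frd[x]}{\gamma}+\fra_0[x]\ge0$ by \eqref{inequ_a0_d}, so $\frt(\lambda)[x]\ge\lambda^2\|x\|^2>0$ for $\lambda\ne0$ (and $\frt(0)[x]=\fra_0[x]>0$). Since the quadratic is strictly positive on the whole ray, any real zeros must lie to the left of $-\frac1\gamma$; the membership $x\in\cD^*$ is used only to guarantee the zeros exist. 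You instead test the single point $-\frac1\gamma$ against the two roots via the sign of $f$ and of $f'$ there, which is a valid root-location criterion for an upward-opening quadratic with real roots, but it forces you to prove the extra inequality $\frd[x]\ge\frac{4\|x\|^2}{\gamma}$ on $\cD^*$ (correctly derived by combining $\frd[x]\ge2\|x\|\sqrt{\fra_0[x]}$ with $\fra_0[x]\ge\frac{\frd[x]}{\gamma}$ and dividing by $\sqrt{\frd[x]}>0$). So your argument is sound and even yields a sharper by-product, but the paper's positivity-on-a-ray argument is shorter and avoids using the defining inequality of $\cD^*$ quantitatively. The concluding step on $\alpha$, including the case distinction on $\gamma_0$ and the observation $\gamma_0\le\gamma$, matches the paper.
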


\begin{proof}
The assumption $\frd\ne0$ implies that $\gamma>0$.
Let $x\in\cD^*$.  It follows from \eqref{inequ_a0_d} that
for $\lambda\in\bigl[-\frac{1}{\gamma},\infty\bigr)\setminus\{0\}$,
\begin{align*}
  \frt(\lambda)[x] &= \lambda^2\|x\|^2+\lambda\frd[x]+\fra_0[x] \\[1ex]
  &\ge \lambda^2\|x\|^2 - \frac{\frd[x]}{\gamma} + \fra_0[x]
  \ge \lambda^2\|x\|^2 > 0.
\end{align*}
Since $\frt(0)[x]=\fra_0[x]>0$, we therefore have $\frt(\lambda)[x]>0$
for all $\lambda\in\bigl[-\frac{1}{\gamma},\infty\bigr)$.
This implies that $p_\pm(x)<-\frac{1}{\gamma}$\,.
The statement on $\alpha$ follows from this and the
inequality $-\frac{1}{\gamma_0}\le-\frac{1}{\gamma}$.
\end{proof}

In the next proposition we discuss situations
when the set $\cD^*$ is empty or non-empty.
Note that (i) in the following
proposition contains a slight improvement of
the fifth  assertion in
\cite[Theorem~3.2]{JT09}.

\begin{proposition} For the set $\cD^*$ we have the following implications.
\begin{itemize}
\item[\rm (i)]
If
\begin{equation}\label{oberg1}
  A_0^{-1/2}DA_0^{-1/2}<2A_0^{-1/2},
\end{equation}
where the inequality is understood as a relation between two
self-adjoint operators in the Hilbert space $H$ $($i.e.\
$\langle A_0^{-1/2}DA_0^{-1/2}x,x\rangle < 2\langle A_0^{-1/2}x,x\rangle$ for
all $x \in H\setminus \{0\})$, then
\[
  \cD^* = \emptyset \qquad \text{and we have} \;\;\; \sigma_p(\cA) \cap \mathbb R = \emptyset.
\]
\item[\rm (ii)]
If
\begin{equation}\label{oberg2}
  \bigl\|A_0^{-1/2}DA_0^{-1/2}\bigr\| > 2\bigl\|A_0^{-1/2}\bigr\|,
\end{equation}
where the norms are the operator norm in the Hilbert space $H$, then
\[
  \cD^* \ne \emptyset.
\]
\end{itemize}
\end{proposition}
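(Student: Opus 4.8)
The plan is to translate the defining inequality of $\cD^*$ in \eqref{Dstar} into a statement about the single bounded self-adjoint operator $S := A_0^{-1/2}DA_0^{-1/2}$ (which is $\ge 0$) and the operator $A_0^{-1/2}$, both acting on $H$, and then to read off (i) and (ii) directly from the two hypotheses. First I would substitute $u := A_0^{1/2}x$, i.e.\ $x = A_0^{-1/2}u$, which is a bijection of $H_{1/2}\setminus\{0\}$ onto $H\setminus\{0\}$ since $A_0^{1/2}$ is an isomorphism from $H_{1/2}$ onto $H$. Under this substitution $\fra_0[x] = \|A_0^{1/2}x\|^2 = \|u\|^2$, $\|x\|^2 = \|A_0^{-1/2}u\|^2$, and (exactly as in the computation in the proof of Lemma~\ref{mnx}) $\frd[x] = \langle S A_0^{1/2}x, A_0^{1/2}x\rangle = \langle Su, u\rangle$. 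Hence, by \eqref{Dstar},
\[
  x \in \cD^* \iff \langle Su,u\rangle \ge 2\,\|A_0^{-1/2}u\|\,\|u\|, \qquad u = A_0^{1/2}x \ne 0.
\]

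For (i), the hypothesis \eqref{oberg1} reads $\langle Su,u\rangle < 2\langle A_0^{-1/2}u,u\rangle$ for every $u \ne 0$. Combining this with the Cauchy--Schwarz inequality $\langle A_0^{-1/2}u,u\rangle \le \|A_0^{-1/2}u\|\,\|u\|$ yields $\langle Su,u\rangle < 2\|A_0^{-1/2}u\|\,\|u\|$ for all $u \ne 0$, so the equivalence above shows that no $x$ lies in $\cD^*$; thus $\cD^* = \emptyset$. For the spectral consequence I would argue that any real eigenvalue produces an element of $\cD^*$: if $\lambda \in \sigma_p(\cA)\cap\RR$ with eigenvector $\smvect{x}{y}\in\dom(\cA)$, then as in \eqref{Steingasse} one has $y = \lambda x$ and $A_0x + \lambda Dx + \lambda^2 x = 0$ in $H_{-\frac12}$; pairing with $x$ in the $\mphhalf$ duality and using \eqref{a0A0}, \eqref{reprD} and \eqref{Wuppi1} gives $\frt(\lambda)[x] = \fra_0[x] + \lambda\frd[x] + \lambda^2\|x\|^2 = 0$ with $x \ne 0$ (as $x=0$ would force $\smvect{x}{y}=0$). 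Hence $x \in \cD^*$, contradicting $\cD^*=\emptyset$, so $\sigma_p(\cA)\cap\RR = \emptyset$.

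For (ii), note that $S \ge 0$ is self-adjoint, so $\|S\| = \sup_{\|u\|=1}\langle Su,u\rangle$. Since \eqref{oberg2} gives $\|S\| > 2\|A_0^{-1/2}\|$, I can pick $u$ with $\|u\|=1$ and $\langle Su,u\rangle > 2\|A_0^{-1/2}\|$. For this $u$ we have $\|A_0^{-1/2}u\|\,\|u\| = \|A_0^{-1/2}u\| \le \|A_0^{-1/2}\|$, hence $\langle Su,u\rangle > 2\|A_0^{-1/2}u\|\,\|u\|$, and the equivalence from the first paragraph shows $x = A_0^{-1/2}u \in \cD^*$; thus $\cD^* \ne \emptyset$.

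The computations are all elementary once the substitution is in place, and the decisive ingredient in each direction is simply the gap between $\langle Su,u\rangle$ and $2\langle A_0^{-1/2}u,u\rangle$ versus $2\|A_0^{-1/2}u\|\,\|u\|$ governed by Cauchy--Schwarz. The only point that requires genuine care is the spectral statement in (i): one must observe that the identity $\frt(\lambda)[x]=0$ for a real eigenvalue $\lambda$ follows directly from the eigenvector equation \eqref{Steingasse} and the form representations, and therefore holds for \emph{every} real eigenvalue of $\cA$, not merely for those lying in the disc $\Phi_{\gamma_0}$ where the spectral equivalence of $\cA$ and $T$ from Proposition~\ref{propdiskrho} is available. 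Thus no appeal to $\Phi_{\gamma_0}$ is needed to rule out real eigenvalues.
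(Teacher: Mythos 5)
Your proposal is correct and follows essentially the same route as the paper: both translate the membership condition \eqref{Dstar} via the substitution $u=A_0^{1/2}x$ into a statement about $S=A_0^{-1/2}DA_0^{-1/2}$, use Cauchy--Schwarz for (i), pick a unit vector nearly attaining $\|S\|=\sup_{\|u\|=1}\langle Su,u\rangle$ for (ii), and derive the spectral consequence in (i) from the eigenvector equation \eqref{Steingasse}, which indeed holds for every real eigenvalue without any appeal to $\Phi_{\gamma_0}$.
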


\begin{proof}
(i) Let $x\in H_{\frac12}\setminus\{0\}$ be arbitrary and set $y\defeq A_0^{1/2}x$.
From the assumption \eqref{oberg1} we obtain that
\[
  \bigl\langle A_0^{-1/2}DA_0^{-1/2} y,y\bigr\rangle
  < 2\bigl\langle A_0^{-1/2}y,y\bigr\rangle \leq 2\|y\|\,\bigl\| A_0^{-1/2}y\bigr\|,
\]
which implies
\[
  \frd[x] = \langle D x,x\rangle_{H_{-\frac{1}{2}}\times H_{\frac{1}{2}}}
  < 2\bigl\|A_0^{1/2}x\bigr\|\,\|x\| = 2\|x\|\sqrt{\fra_0[x]}.
\]
Together with \eqref{Dstar} this shows that $x\notin\cD^*$.
Hence $\cD^* = \emptyset$.
To prove the last statement in (i), let $\lambda$ be a real eigenvalue of $\cA$
with corresponding eigenvector $\smvect{x}{y} \in \dom (\mathcal A)$.
Then
\begin{equation*}
 A_0x +  \lambda Dx + \lambda^2 x=0,
\end{equation*}
by \eqref{Steingasse}, which implies that $\frt(\lambda)[x]=0$.
The latter is not possible since $\cD^* = \emptyset$.

(ii) The number $\| A_0^{-1/2}DA_0^{-1/2}\|$ is an element of
the closure of the numerical range of the self-adjoint operator
$ A_0^{-1/2}DA_0^{-1/2}$. Therefore, there exists a sequence
$(y_n)$ in $H$ with $\|y_n\|=1$ such that
\[
  \bigl\langle A_0^{-1/2}DA_0^{-1/2}y_n,y_n\bigr\rangle
  \to \bigl\|A_0^{-1/2}DA_0^{-1/2}\bigr\|
  \qquad \text{as}\;\; n\to \infty.
\]
Assumption \eqref{oberg2} implies that
$\langle A_0^{-1/2}DA_0^{-1/2}y_{n_0},y_{n_0}\rangle >2\|A_0^{-1/2}\|$
for some $n_0\in \mathbb N$.
Set $x \defeq A_0^{-1/2}y_{n_0}$; then
\begin{align*}
  \frd[x] &= \langle D x,x\rangle_{H_{-\frac{1}{2}}\times H_{\frac{1}{2}}}
  = \bigl\langle A_0^{-1/2}DA_0^{-1/2}y_{n_0},y_{n_0}\bigr\rangle
  > 2\bigl\|A_0^{-1/2}\bigr\| \\
  &\geq 2\bigl\|A_0^{-1/2}y_{n_0}\bigr\|
  = 2\|x\|\,\bigl\|A_0^{1/2}x\bigr\|
  = 2\|x\|\sqrt{\fra_0[x]}.
\end{align*}
Now we obtain from \eqref{Dstar} that $x\in\cD^*$;
hence $\cD^*\ne\emptyset$.
\end{proof}

The following theorem is one of the main results of this paper.
Recall that an eigenvalue is called semi-simple if the algebraic and
geometric multiplicities coincide, i.e.\ if there are no Jordan chains.

% ----------------------------------------------------------
\begin{theorem}\label{theo100}
Assume that  {\rm\textbf{(F1)}--\textbf{(F3)}} are satisfied.
Let $\Delta$ be an interval  with $\Delta \subset  (\alpha,0]$
and $\max \Delta =0$.
Then the set $\sigma({\mathcal A})\cap \Delta$ is either empty or consists only
of a finite or infinite sequence of isolated semi-simple eigenvalues of finite multiplicity
of $\cA$.  The case of  infinitely many eigenvalues in
$\sigma(\cA)\cap\Delta$ can occur only if
$\alpha = -\frac{1}{\gamma_0}=\inf \Delta$
and, in this case, the eigenvalues
accumulate only at~$ -\frac{1}{\gamma_0}$.

If $\sigma(\cA)\cap\Delta$ is empty, then set $N\defeq0$;
otherwise, denote the eigenvalues
of $\cA$ in $\Delta$ by $(\lambda_j)_{j=1}^N$, $N\in\NN\cup\{\infty\}$, in
non-increasing order, counted according to their multiplicities:
$\lambda_1 \ge \lambda_2 \ge \cdots$.  Then the $n$th eigenvalue $\lambda_n$,
$n\in\NN$, $n\le N$, satisfies
\begin{equation}\label{minmax_re_al}
  \lambda_n = \max_{\substack{L\subset H_{1/2} \\ \dim L = n}}
  \;\; \min_{x\in L\setminus\{0\}}
  \;\; p_+(x)
  =  \min_{\substack{L\subset H \\ \dim L = n-1}}
  \;\; \sup_{\substack{x\in H_{1/2}\setminus\{0\} \\ x \perp L}}
  \; p_+(x).
\end{equation}
If $N<\infty$, then
\begin{equation}\label{Curibau}
  \begin{aligned}
    \sup_{\substack{L\subset\cD \\ \dim L = n}}
    \;\;\min_{x\in L\setminus\{0\}}\;\; p_+(x)
    &\le \inf\Delta \\[1ex]
    \inf_{\substack{L\subset H \\ \dim L = n-1}}
    \;\; \sup_{\substack{x\in\cD\setminus\{0\} \\ x \perp L}}\; p_+(x)
    &\le \inf\Delta
  \end{aligned}
  \qquad\text{for}\; n>N \;\;\text{with}\; n\le\dim H.
  \end{equation}
% ******** version in two lines: ***********
%\begin{equation}\label{minmax_re_al}
%\begin{split}
%  \lambda_n &= \max_{\substack{L\subset H_{1/2} \\ \dim L = n}}
%  \;\; \min_{x\in L\setminus\{0\}}
%  \;\; p_+(x), \\[1ex]
%  \lambda_n &=  \min_{\substack{L\subset H \\ \dim L = n-1}}
%  \;\; \sup_{\substack{x\in H_{1/2}\setminus\{0\} \\ x \perp L}}
%  \; p_+(x).
%\end{split}
%\end{equation}
\end{theorem}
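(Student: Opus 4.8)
The plan is to deduce everything from the abstract variational principle in Theorem~\ref{th:var_right}, applied to the holomorphic family $T(\lambda)$, $\lambda\in\Phi_{\gamma_0}$, on the interval $\Delta$, and then to transfer the conclusions from $T$ to $\cA$ via Proposition~\ref{propdiskrho}. First I would record that $\Delta\subset(\alpha,0]\subset\Phi_{\gamma_0}$ (using $\alpha\ge-\frac{1}{\gamma_0}$ when $\gamma_0>0$), so the setup of Section~\ref{sec:var} applies with $\Omega=\Phi_{\gamma_0}$, $a=\inf\Delta$, $b=0$. The hypotheses {\rm (I)} hold: by Proposition~\ref{HotelCentral} the family is of type~(B) with $T(\lambda)$ self-adjoint and bounded below for real $\lambda$, and since $T(0)=A_0$ is positive definite, $\dim\cL_{(-\infty,0)}(T(0))=0<\infty$, so one may take $c=0\in\Delta$.

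Next I would verify $(\nearrow)$ and that $p_+$ is a generalized Rayleigh functional on $\Delta$. For fixed $x\ne0$ the map $\lambda\mapsto\frt(\lambda)[x]$ is an upward parabola with roots $p_-(x)\le p_+(x)$. The crucial point is that $\Delta\subset(\alpha,0]$ with $\alpha\ge\sup_{x\in\cD^*}p_-(x)$ forces any zero $\lambda_0\in\Delta$ to be the \emph{larger} root: indeed $p_-(x)\le\alpha<\lambda_0$, hence $\lambda_0=p_+(x)$ and the inequality $p_-(x)<p_+(x)$ is strict. Consequently the parabola is negative on $(\inf\Delta,\lambda_0)$ and positive on $(\lambda_0,0]$, which is exactly $(\nearrow)$; the remaining defining cases in Definition~\ref{StJohn} are checked using $\frt(0)[x]=\fra_0[x]>0$ (so the case ``$\frt(\cdot)[x]<0$ on all of $\Delta$'' never occurs, as $0\in\Delta$) and the fact that $p_+(x)<\inf\Delta$, resp.\ $p_+(x)=-\infty$, whenever there is no zero in $\Delta$. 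Taking $b'=0$ gives $\kappa=\dim\cL_{(-\infty,0)}(T(0))=0$, so the index shift disappears and \eqref{minmax_re1}, \eqref{minmax_re2} become the two equalities in \eqref{minmax_re_al}, while \eqref{minmax_re_n_g_N} becomes \eqref{Curibau} (here $\Delta'=\Delta$ since $\sess(T)=\emptyset$). The infimum in \eqref{minmax_re1} is in fact a minimum, because $p_+$ is scale invariant and its only discontinuities are downward jumps to $-\infty$: on each finite-dimensional $L$ either some unit vector lies outside $\cD^*$, so the value is $-\infty$, or $L\setminus\{0\}\subset\cD^*$, where $p_+$ is continuous on the unit sphere.

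For the structural statement I would combine Proposition~\ref{propdiskrho} (equality of point spectra and geometric multiplicities of $\cA$ and $T$ on $\Phi_{\gamma_0}$, and $\sess(T)=\emptyset$) with Theorem~\ref{th:var_right}: $\sigma(\cA)\cap\Delta=\sigma(T)\cap\Delta$ is empty or a sequence of isolated eigenvalues of finite geometric multiplicity accumulating at most at $\inf\Delta$. Semi-simplicity follows from the strict inequality $\frt'(\lambda_0)[x_0]=2\lambda_0\|x_0\|^2+\frd[x_0]=\|x_0\|^2\bigl(p_+(x_0)-p_-(x_0)\bigr)>0$ for an eigenvector $x_0$ of $T(\lambda_0)$: a Jordan chain would make $T(\lambda_0)x_1=-T'(\lambda_0)x_0$ solvable, forcing $\frt'(\lambda_0)[x_0]=-\langle x_1,T(\lambda_0)x_0\rangle=0$ by self-adjointness of $T(\lambda_0)$, a contradiction; extending the eigenvector correspondence of Proposition~\ref{propdiskrho} to generalized eigenvectors rules out Jordan chains of $\cA$ as well, so algebraic and geometric multiplicities coincide and are finite.

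Finally, for the dichotomy on the number of eigenvalues: since $\sigma(\cA)$ is discrete in $\CC\setminus\sess(\cA)\supset\Phi_{\gamma_0}$ (Proposition~\ref{Ilmenau}), a bounded interval $\Delta$ can contain infinitely many eigenvalues only if they accumulate at a point of $\sess(\cA)$; as the accumulation can only occur at $\inf\Delta$ and $\max\sess(\cA)=-\frac{1}{\gamma_0}$, this forces $\inf\Delta=-\frac{1}{\gamma_0}$, and then $-\frac{1}{\gamma_0}\le\alpha\le\inf\Delta=-\frac{1}{\gamma_0}$ yields $\alpha=-\frac{1}{\gamma_0}=\inf\Delta$. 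The main obstacle I expect is not any single step but the careful verification that $\Delta\subset(\alpha,0]$ simultaneously delivers $(\nearrow)$, the validity of $p_+$ as a generalized Rayleigh functional, and the strict positivity of $\frt'$ needed for semi-simplicity; getting the endpoint behaviour right (the role of $0\in\Delta$ and the handling of $p_+(x)=-\infty$) is the delicate bookkeeping.
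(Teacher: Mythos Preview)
Your proposal is correct and follows essentially the same route as the paper: verify {\rm(I)} and $(\nearrow)$ for $T$ on $\Omega=\Phi_{\gamma_0}$, confirm that $p_+$ is a generalized Rayleigh functional with $\kappa=\dim\cL_{(-\infty,0)}(T(0))=0$ at $b'=0$, apply Theorem~\ref{th:var_right} (with $\Delta'=\Delta$ since $\sess(T)=\emptyset$), and transfer to $\cA$ via Propositions~\ref{Ilmenau} and~\ref{propdiskrho}. The only cosmetic difference is the semi-simplicity step: the paper starts directly from a Jordan chain of $\cA$ and computes at the form level to reach $\frt'(\lambda_0)[x_0]=\langle(2\lambda_0+D)x_0,x_0\rangle_{H_{-\frac12}\times H_{\frac12}}=0$, whereas you phrase it as ruling out $T$-Jordan chains and then invoking the generalized-eigenvector correspondence---the underlying calculation is identical.
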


\begin{proof}
Except for the semi-simplicity, the first part of Theorem \ref{theo100}
follows from Proposition \ref{Ilmenau}.
Let us next prove the second part, for which we apply Theorem~\ref{th:var_right}.
To this end, we consider the operator function $T$ defined
on $\Omega\defeq\Phi_{\gamma_0}$.
Assumption (I) in Section~\ref{sec:var} is satisfied
because of Proposition~\ref{HotelCentral} and
because $T(0)=A_0$ is a positive definite operator in $H$.
Next we show that $(\nearrow)$ is satisfied.  For $x\in H_\half\setminus\{0\}$, the function
$\lambda\mapsto\frt(\lambda)[x]$ is increasing at value zero on $\Delta$ because it is convex
and a zero in $(\alpha,0]$ is the greater one of the two zeros of that function
by the definition of $\alpha$ (note that a double-zero cannot lie in $(\alpha,0]$).
Hence $(\nearrow)$ is satisfied.
Moreover, $p_+$ satisfies \eqref{def_p_incr} in both cases $x\in\cD^*$ and $x\notin\cD^*$
by the definition of $p_+$. Therefore,
\[
  p(x) \defeq p_+(x), \qquad x\in H_{\frac12},
\]
is a generalized Rayleigh functional for $T$ on $\Delta$,
cf.\ Definition~\ref{StJohn}.

By Proposition~\ref{propdiskrho} the eigenvalues and their geometric
multiplicities of $T$ and $\mathcal A$ coincide in $\Delta$, and
the interval $\Delta'$ in Theorem~\ref{th:var_right} equals now $\Delta$.
The quantity $\kappa$ in Theorem~\ref{th:var_right} is determined as
\[
  \kappa = \dim\mathcal L_{(-\infty,0)}\bigl(T(0)\bigr) =
  \dim\mathcal L_{(-\infty,0)}(A_0)=0.
\]
Now the formulae in \eqref{minmax_re_al} and in
\eqref{Curibau} follow from \eqref{minmax_re1}, \eqref{minmax_re2},
Remark \ref{rem_minmax2} and Proposition~\ref{propdiskrho}.

Let us finally show that the eigenvalues of $\cA$ in $(\alpha,0)$ are semi-simple.
Assume that $\lambda\in(\alpha,0)$ is an eigenvalue that has a Jordan chain, i.e.\
there exist vectors $\smvect{x_0}{y_0}$, $\smvect{x_1}{y_1} \in \dom(\mathcal A)$,
both being non-zero, such that
\begin{equation}\label{Jordan}
  (\mathcal A-\lambda)\vect{x_0}{y_0} = 0, \qquad
  (\mathcal A-\lambda)\vect{x_1}{y_1} = \vect{x_0}{y_0}.
\end{equation}
It follows that $y_0=\lambda x_0$ and $x_0 \ne 0$. Moreover, we have
$x_0 \in \dom(T(\lambda))$ and $T(\lambda)x_0 =0$, cf.\ \eqref{Steingasse}.
From the second equation in \eqref{Jordan} it follows that
\[
  y_1 =x_0 + \lambda x_1 \qquad \text{and} \qquad
  A_0 x_1 + Dy_1 + \lambda y_1 = -\lambda x_0.
\]
Substituting for $y_1$ we obtain
\[
  -\left( \lambda^2 + \lambda D +A_0 \right)x_1 = (2\lambda+D)x_0
\]
and hence, by \eqref{Wuppi5} and the symmetry of $\frt(\lambda)$ for
real $\lambda$,
\begin{align*}
  \bigl\langle (2\lambda+D)x_0,x_0\bigr\rangle_{H_{-\frac12}\times H_{\frac12}}
  &= - \frt(\lambda)[x_1,x_0] = -\overline{ \frt(\lambda)[x_0,x_1]} \\[1ex]
  &= -\overline{\bigl\langle T(\lambda)x_0,x_1\bigr\rangle_{H_{-\frac12}\times H_{\frac12}}}
  = 0,
\end{align*}
where we used that $x_0 \in \ker T(\lambda)$.
The left-hand side of this equation is equal to $\frt'(\lambda)[x_0]$,
which is positive because $\lambda\in(\alpha,0)$ and there is no double-zero
of  $\lambda \mapsto \frt(\lambda)[x_0]$ in $(\alpha,0]$.
This is a contradiction and hence $\lambda$ is semi-simple.
\end{proof}

%With Theorem \ref{theo100} we are now able to show the following
%proposition.

The next proposition provides a sufficient condition for the existence of
eigenvalues in the interval $\bigl(-\frac{1}{\gamma_0},0\bigr)$.

% -------------------------------------------------------------------
\begin{proposition}\label{burggarten}
Assume that {\rm\textbf{(F1)}--\textbf{(F3)}} are satisfied and that $\gamma_0>0$.
If
\begin{equation}\label{clouds}
  \sigma\Bigl(A_0^{-1/2}DA_0^{-1/2}-\frac{1}{\gamma_0}A_0^{-1}\Bigr)\cap(\gamma_0,\infty)\ne\emptyset,
\end{equation}
then
\begin{equation}\label{sky}
  \sigma(\cA)\cap\Bigl(-\frac{1}{\gamma_0},0\Bigr)\ne\emptyset.
\end{equation}
\end{proposition}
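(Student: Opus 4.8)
The plan is to bypass Theorem~\ref{theo100} and instead run an eigenvalue-counting (intermediate-value) argument for a bounded self-adjoint family on $H$. The reason for not using Theorem~\ref{theo100} directly is that it only characterizes eigenvalues in $(\alpha,0)$, whereas $\alpha$ can be strictly larger than $-\frac1{\gamma_0}$, so part of the interval in \eqref{sky} may lie below $\alpha$ and escape the variational principle.

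First I would rewrite the quadratic form through the bounded operator $S\defeq A_0^{-1/2}DA_0^{-1/2}$. For $x\in H_{\frac12}$ put $y\defeq A_0^{1/2}x\in H$; then, using \eqref{Wuppi5} together with $\|x\|^2=\langle A_0^{-1}y,y\rangle$, $\frd[x]=\langle Sy,y\rangle$ and $\fra_0[x]=\|y\|^2$, one obtains
\[
  \frt(\lambda)[x]=\bigl\langle B(\lambda)y,y\bigr\rangle,\qquad
  B(\lambda)\defeq \lambda^2A_0^{-1}+\lambda S+I,
\]
where $B(\lambda)$ is bounded, self-adjoint and analytic in $\lambda$. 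Evaluating at $\lambda=-\frac1{\gamma_0}$ and multiplying by $\gamma_0>0$ yields
\[
  \gamma_0\bigl\langle B(-\tfrac1{\gamma_0})y,y\bigr\rangle
  =\gamma_0\|y\|^2-\bigl\langle\bigl(S-\tfrac1{\gamma_0}A_0^{-1}\bigr)y,y\bigr\rangle .
\]
Hypothesis \eqref{clouds} says precisely that $\max\sigma\bigl(S-\frac1{\gamma_0}A_0^{-1}\bigr)>\gamma_0$, so there is a $y_0\in H\setminus\{0\}$ with $\langle B(-\frac1{\gamma_0})y_0,y_0\rangle<0$.

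Next I would study $n(\lambda)\defeq\dim\cL_{(-\infty,0)}\bigl(B(\lambda)\bigr)$ on $(-\frac1{\gamma_0},0]$. Since $A_0^{-1}$ is compact by \textbf{(F3)}, the term $\lambda^2A_0^{-1}$ is a compact perturbation, so by Weyl's theorem $\sess(B(\lambda))=1+\lambda\,\sess(S)$; because $\gamma_0=\max\sess(S)$ and $\lambda>-\frac1{\gamma_0}$, this set lies in $[1+\lambda\gamma_0,\infty)\subset(0,\infty)$. Hence $\sigma(B(\lambda))\cap(-\infty,0]$ consists of finitely many isolated eigenvalues and $n(\lambda)<\infty$. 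Moreover $B(0)=I$ gives $n(0)=0$, while continuity of $\lambda\mapsto\langle B(\lambda)y_0,y_0\rangle$ together with $\langle B(-\frac1{\gamma_0})y_0,y_0\rangle<0$ produces a $\lambda^*\in(-\frac1{\gamma_0},0)$ with $\langle B(\lambda^*)y_0,y_0\rangle<0$, so that $n(\lambda^*)\ge1$.

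Finally I would use stability of finitely many eigenvalues in a spectral gap: since $\sess(B(\lambda))$ stays in $[1+\lambda\gamma_0,\infty)$ with $1+\lambda\gamma_0>0$ and $B(\cdot)$ is norm-continuous, $n$ is locally constant on $\{\lambda\in(-\frac1{\gamma_0},0):0\in\rho(B(\lambda))\}$ and can change only where $0\in\sigma(B(\lambda))$. As $n(\lambda^*)\ge1\ne0=n(0)$, there must be a $\lambda_0\in(\lambda^*,0)\subset(-\frac1{\gamma_0},0)$ with $0\in\sigma(B(\lambda_0))$; because $0\notin\sess(B(\lambda_0))$, this is an eigenvalue. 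Undoing $x=A_0^{-1/2}y$ gives $0\in\sigma_{\rm p}(T(\lambda_0))$, and Proposition~\ref{propdiskrho} then yields $\lambda_0\in\sigma_{\rm p}(\cA)$, i.e.\ \eqref{sky}. The main obstacle is the bookkeeping in the third step: one must guarantee that the negative spectrum of $B(\lambda)$ remains finite and isolated throughout $(-\frac1{\gamma_0},0)$, which hinges on keeping $\sess(B(\lambda))$ strictly positive — where compactness of $A_0^{-1}$ and the identity $\gamma_0=\max\sess(S)$ are indispensable — and on the perturbation-theoretic fact that the count below the gap is locally constant, so it can drop from $\ge1$ to $0$ only by crossing an eigenvalue of $T$.
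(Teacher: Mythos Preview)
Your proof is correct and follows essentially the same strategy as the paper: both conjugate by $A_0^{1/2}$ to reduce to a bounded self-adjoint family on $H$ (the paper's $R(\lambda)=S+\lambda A_0^{-1}+\tfrac1\lambda I$ is just $\tfrac1\lambda B(\lambda)$), use compactness of $A_0^{-1}$ together with $\gamma_0=\max\sess(S)$ to keep the essential spectrum away from $0$ on $(-\tfrac1{\gamma_0},0)$, and then apply a continuity argument to locate a point where $0$ is an eigenvalue. The only cosmetic difference is that the paper invokes the intermediate value theorem for $\lambda\mapsto\max\sigma(R(\lambda))$ (which is positive at $-\tfrac1{\gamma_0}$ by \eqref{clouds} and tends to $-\infty$ as $\lambda\to0-$) instead of your negative-eigenvalue count for $B(\lambda)$.
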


\begin{proof}
Define the following operator function
\[
  R(\lambda) \defeq A_0^{-1/2}DA_0^{-1/2}+\lambda A_0^{-1}+
  \frac{1}{\lambda}I,
  \qquad \lambda\in\RR\setminus\{0\},
\]
whose values are bounded operators in $H$.
Assumption \eqref{clouds} implies that
\[
  \max\sigma\Bigl(R\Bigl(-\frac{1}{\gamma_0}\Bigr)\Bigr)
  = \max\sigma\Bigl(A_0^{-1/2}DA_0^{-1/2}-\frac{1}{\gamma_0}A_0^{-1}-\gamma_0I\Bigr) > 0.
\]
On the other hand, for $\lambda<0$,
\[
  \max\sigma\bigl(R(\lambda)\bigr) \le \gamma+\frac{1}{\lambda} \to -\infty
  \qquad\text{as}\;\;\lambda\to0-.
\]
Since $\max\sigma(R(\lambda))$ is continuous in $\lambda$
(see, e.g.\ \cite[Theorem~V.4.10]{K}),
there exists a $\lambda_0\in\bigl(-\frac{1}{\gamma_0},0\bigr)$ such that
$\max\sigma(R(\lambda_0))=0$.  The compactness of $A_0^{-1}$ implies that
\[
  \max\sess\bigl(R(\lambda_0)\bigr) = \gamma_0+\frac{1}{\lambda_0} < 0.
\]
Hence $0\in\sigma_{\rm p}(R(\lambda_0))$, i.e.\ there exists a $y\in H\setminus\{0\}$
such that
\[
  A_0^{-1/2}DA_0^{-1/2}y+\lambda_0A_0^{-1}y+\frac{1}{\lambda_0}y = 0.
\]
Applying $A_0^{1/2}$ to both sides, multiplying by $\lambda_0$ and
setting $x\defeq A_0^{-1/2}y$ we obtain that
\[
  \lambda_0^2x+\lambda_0Dx+A_0x = 0.
\]
This, together with \eqref{BenianCourt}, implies \eqref{sky}.
\end{proof}

The converse of Proposition~\ref{burggarten} is not true, i.e.\ \eqref{sky}
does not imply \eqref{clouds}.  This can be seen from the following example.
Let $H=\ell^2$ and define the operators $A_0$ and $D$ by
\[
  (A_0x)_n = nx_n, \qquad
  (Dx)_n = \begin{cases}
    2x_1, & n=1, \\[1ex]
    \dfrac{n}{2}x_n, & n\ge2,
  \end{cases}
\]
where $x=(x_n)_{n=1}^\infty$.  Then $\gamma_0=\frac{1}{2}$,
\[
  \sigma\Bigl(A_0^{-1/2}DA_0^{-1/2}-\frac{1}{\gamma_0}A_0^{-1}\Bigr)
  = \Bigl\{0,\frac12\Bigr\} \cup \Bigl\{\frac12-\frac{2}{n}\mid n\in\NN, n\ge 2\Bigr\},
\]
which is disjoint from $(\gamma_0,\infty)$.  However, $-1$ is an eigenvalue of $T$
with eigenvector $(1,0,0,\dots)$.

\medskip

With the help of the form $\frt(\lambda)$ it is shown in the following
proposition that a certain triangle belongs to the resolvent set
of $\mathcal A$; see Figure~\ref{figtriangle}.
This complements \cite[Theorem~3.2]{JTW}, where it was shown
that the open disc around zero with radius
\[
  r = \frac{2}{\gamma+\sqrt{\gamma^2+4\|A_0^{-1}\|}\,}\,,
\]
belongs to $\rho(\mathcal A)$; note that $r<\frac{1}{\gamma}$.

\begin{figure}[ht]
\begin{center}
\setlength\unitlength{1cm}
\begin{picture}(12,5)(-6,-2.5)
% ----- axes
\put(-3,0){\line(1,0){0.95}}
\put(-1.95,0){\vector(1,0){5}}
\put(0,-2.2){\vector(0,1){4.4}}
\put(0,0){\line(-1,1){1.95}}
\put(0,0){\line(-1,-1){1.95}}
\put(-2,0.05){\line(0,1){1.9}}
\put(-2,-0.05){\line(0,-1){1.9}}
% ----- circles -----
\put(-2,0){\circle{0.1}}
\put(-2,2){\circle{0.1}}
\put(-2,-2){\circle{0.1}}
% ----- area -----
\put(-2,1.4){\line(1,2){0.2}}
\put(-2,0.8){\line(1,2){0.4}}
\put(-2,0.2){\line(1,2){0.6}}
\put(-2,-0.4){\line(1,2){0.8}}
\put(-2,-1){\line(1,2){1.0}}
\put(-2,-1.6){\line(1,2){1.2}}
\put(-1.8,-1.8){\line(1,2){1.2}}
\put(-1.2,-1.2){\line(1,2){0.8}}
\put(-0.6,-0.6){\line(1,2){0.4}}
% ----- text -----
\put(2.5,-0.4){$\Re z$}
\put(0.2,2.0){$\Im z$}
\put(-2.6,-0.4){$-\frac{1}{\gamma}$}
\put(-3.3,1.9){$-\frac{1}{\gamma}+\frac{i}{\gamma}$}
\put(-3.3,-2.1){$-\frac{1}{\gamma}-\frac{i}{\gamma}$}
\end{picture}
\caption{The region on the left-hand side of \eqref{diskrho},
which is contained in $\rho(\cA)$;
the three circles indicate the numbers
$-\frac{1}{\gamma}$, $-\frac{1}{\gamma}\pm\frac{i}{\gamma}$,
which, in general, do not belong to $\rho(\cA)$.}
\label{figtriangle}
\end{center}
\end{figure}
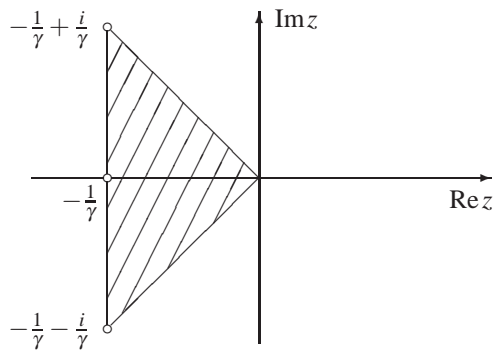

% ----------------------------------------------------------
\begin{proposition}\label{propdiskrho2}
Assume that $\frd\ne0$.  Then
\begin{equation}\label{diskrho}
  \biggl\{z\in\CC\biggm| z=0\;\;\text{or}\;\;-\frac{1}{\gamma}\le\Re z<0,\,
  \arg z\in\biggl[\frac{3\pi}{4},\frac{5\pi}{4}\biggr]\biggr\}
  \Big\backslash \biggl\{-\frac{1}{\gamma}\,,-\frac{1}{\gamma}\pm\frac{i}{\gamma}\biggr\}
  \subset \rho(\mathcal A)
\end{equation}
where $\gamma$ is defined in \eqref{Elbersfeld}. If, in addition,
$\gamma \ne \gamma_0$, then also $-\frac{1}{\gamma}\in\rho(\mathcal A)$.
\end{proposition}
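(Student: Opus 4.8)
The plan is to reduce everything to the invertibility of the bounded operator $\widehat T(\lambda)\defeq A_0+\lambda D+\lambda^2 I$, regarded as a map from $H_{\frac12}$ into $H_{-\frac12}$; unlike the $H$-operator $T(\lambda)$ of Proposition~\ref{HotelCentral}, this is defined for every $\lambda\in\CC$ and hence also outside $\Phi_{\gamma_0}$, which is essential since the triangle in \eqref{diskrho} is not contained in $\Phi_{\gamma_0}$. Repeating the computation around \eqref{Steingasse} in the inhomogeneous case shows that if $\widehat T(\lambda)$ maps $H_{\frac12}$ bijectively onto $H_{-\frac12}$, then for given $\smvect{f}{g}\in H_{\frac12}\times H$ the vector $x\defeq\widehat T(\lambda)^{-1}\bigl(-(g+Df+\lambda f)\bigr)$ together with $y\defeq f+\lambda x$ lies in $\dom(\cA)$ and solves $(\cA-\lambda)\smvect{x}{y}=\smvect{f}{g}$ uniquely, so $\lambda\in\rho(\cA)$. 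Using \eqref{a0A0} and \eqref{reprD} I would factor $\widehat T(\lambda)=A_0^{1/2}M(\lambda)A_0^{1/2}$ with $M(\lambda)\defeq I+\lambda S+\lambda^2A_0^{-1}$ acting in $H$, where $S\defeq A_0^{-1/2}DA_0^{-1/2}$; since $A_0^{1/2}$ is an isomorphism $H_{\frac12}\to H$ and $H\to H_{-\frac12}$, bijectivity of $\widehat T(\lambda)$ is equivalent to bijectivity of $M(\lambda)$ in $H$.

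Next I would establish that $M(\lambda)$ is Fredholm of index $0$ for every $\lambda$ in the triangle with $\lambda\ne-\frac1\gamma$. The operator $S$ is bounded, self-adjoint and non-negative with $\sigma(S)\subset[0,\gamma]$, so $0\in\sigma(I+\lambda S)$ only if $\lambda=-1/\mu$ for some $\mu\in\sigma(S)\setminus\{0\}$, i.e.\ only if $\lambda\in(-\infty,-\frac1\gamma]$. This ray meets the closed triangle only in its vertex $-\frac1\gamma$, so $I+\lambda S$ is boundedly invertible for all other $\lambda$ in the triangle. Because $A_0^{-1}$ is compact by \textbf{(F3)}, the operator $M(\lambda)=(I+\lambda S)+\lambda^2A_0^{-1}$ is then a compact perturbation of an invertible operator and therefore Fredholm with index $0$.

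It remains to prove injectivity of $M(\lambda)$ on the triangle minus its three vertices. If $M(\lambda)y=0$, then testing with $y$ and putting $x\defeq A_0^{-1/2}y$ gives $\frt(\lambda)[x]=\lambda^2\|x\|^2+\lambda\frd[x]+\fra_0[x]=0$, so injectivity follows once I show $\frt(\lambda)[x]\ne0$ for all $x\in H_{\frac12}\setminus\{0\}$ and all such $\lambda$. A zero of $\frt(\cdot)[x]$ is one of $p_\pm(x)$ from \eqref{scalar_qu_eq17}. If the roots are real (that is, $x\in\cD^*$), Lemma~\ref{lem5.5} gives $p_\pm(x)<-\frac1\gamma$, so they lie strictly left of the base line $\Re z=-\frac1\gamma$ and miss the triangle. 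If the roots are non-real, they equal $-\frac{\frd[x]}{2\|x\|^2}\pm i\,\frac{\sqrt{4\|x\|^2\fra_0[x]-\frd[x]^2}}{2\|x\|^2}$; such a root lies in the sector $\arg z\in[\frac{3\pi}4,\frac{5\pi}4]$ exactly when $|\Im z|\le|\Re z|$, i.e.\ $\frd[x]^2\ge2\|x\|^2\fra_0[x]$, and combining this with $\frd[x]\le\gamma\fra_0[x]$ from \eqref{inequ_a0_d} forces $\Re z\le-\frac1\gamma$, with equality throughout precisely when $\frd[x]=\gamma\fra_0[x]$ and $\|x\|^2=\frac{\gamma^2}{2}\fra_0[x]$, in which case the root is a corner $-\frac1\gamma\pm\frac{i}{\gamma}$. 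Hence no zero of $\frt(\cdot)[x]$ lies in the triangle except possibly at the two corners, so $\frt(\lambda)[x]\ne0$ on the triangle minus the corners; note that $\frt(-\frac1\gamma)[x]\ne0$ as well, since $-\frac1\gamma$ is neither a real root (which is $<-\frac1\gamma$) nor a non-real one. Combining Fredholmness of index $0$ with injectivity yields bijectivity of $M(\lambda)$, hence $\lambda\in\rho(\cA)$, for every $\lambda$ in the region \eqref{diskrho} (the point $z=0$ being trivial, as $M(0)=I$; cf.\ Proposition~\ref{prop:cont}).

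Finally, for the additional statement I would note that $\gamma\ne\gamma_0$ means $\gamma_0<\gamma$, so $\bigl|-\frac1\gamma\bigr|=\frac1\gamma<\frac1{\gamma_0}$ and thus $-\frac1\gamma\in\Phi_{\gamma_0}$; since $\frt(-\frac1\gamma)[x]\ne0$ for all $x\ne0$, the point $-\frac1\gamma$ is not an eigenvalue of $T$, and by \eqref{BenianCourt} it lies in $\rho(\cA)$. The main obstacle I anticipate is the non-real-root computation: pinning down that a complex root enters the closed triangle only at the two excluded corners, and in particular never reaches the interior of the base edge, is the delicate geometric heart of the argument; the Fredholm step, by contrast, is straightforward once the factorization through the self-adjoint operator $S$ is in place.
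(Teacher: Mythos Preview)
Your proof is correct but takes a genuinely different route from the paper's. The paper argues much more briefly: by Proposition~\ref{Ilmenau} the closed triangle (with the possible exception of $-\frac{1}{\gamma}$ when $\gamma=\gamma_0$) is disjoint from $\sess(\cA)$, so any spectrum there consists of isolated eigenvalues; an eigenvector $\smvect{x}{\lambda x}$ gives $\frt(\lambda)[x]=0$, and then the single estimate
\[
  \Re\frt(\lambda)[x]=\Re(\lambda^2)\|x\|^2+(\Re\lambda)\,\frd[x]+\fra_0[x]
  \ge \Re(\lambda^2)\|x\|^2+\bigl(1+\gamma\Re\lambda\bigr)\fra_0[x]>0,
\]
using $\Re(\lambda^2)\ge0$ and $\Re\lambda\ge-\frac{1}{\gamma}$ with at least one inequality strict away from the three excluded vertices, yields a contradiction. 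Your argument replaces the appeal to Proposition~\ref{Ilmenau} by a direct Fredholm analysis of $M(\lambda)=I+\lambda S+\lambda^2A_0^{-1}$ via compact perturbation, and replaces the real-part estimate by a case split that locates the roots $p_\pm(x)$ geometrically. What you gain is self-containedness: the Fredholm property and the Schur-complement link between $\rho(\cA)$ and bijectivity of $\widehat T(\lambda)$ are built from scratch rather than imported from the earlier spectral description of $\cA$. What the paper gains is brevity and uniformity: the $\Re\frt(\lambda)[x]$ computation disposes of the real and non-real cases simultaneously in two lines, avoiding your root-location analysis (which, as you correctly anticipated, is the only delicate step in your approach; note in particular that once $\Re z=-\frac1\gamma$ the inequality $\frd[x]\le\gamma\fra_0[x]$ forces $|\Im z|\ge\frac1\gamma$, so combined with the sector condition you indeed land exactly on a corner).
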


\begin{proof}
Since $\frd\ne0$, we have $D\ne 0$ and $\gamma >0$, see \eqref{Elbersfeld}.
Let $\lambda$ be either in the set on the left-hand side of \eqref{diskrho}
or let $\lambda=-\frac{1}{\gamma}$ and assume that $\gamma\ne\gamma_0$
in the latter case.
Suppose that $\lambda\in\sigma(\cA)$.
By Proposition \ref{Ilmenau} the
set on the left-hand side of \eqref{diskrho} is disjoint from $\sess(\cA)$,
and $-\frac{1}{\gamma}\notin\sess(\cA)$ if $\gamma\ne\gamma_0$.
Hence $\lambda$ is an eigenvalue of $\cA$.
By \eqref{Steingasse} there exists an $x\in H_{\frac12}\setminus\{0\}$
such that $\frt(\lambda)[x]=0$.
We have $\Re(\lambda)\ge-\frac{1}{\gamma}$ and $\Re(\lambda^2)\ge0$,
where at least one of the two inequalities is strict.
Using \eqref{inequ_a0_d} we therefore obtain
\begin{align*}
  0 &= \re\bigl(\frt(\lambda)[x]\bigr)
  = \Re(\lambda^2)\|x\|^2 + (\Re\lambda)\frd[x] + \fra_0[x] \\[1ex]
  &\ge \Re(\lambda^2)\|x\|^2 + (\Re\lambda)\gamma\,\fra_0[x] + \fra_0[x] \\[1ex]
  &> -\frac{1}{\gamma}\cdot\gamma\,\fra_0[x] + \fra_0[x] = 0,
\end{align*}
which is a contradiction.  Hence $\lambda\in\rho(\cA)$.
\end{proof}

One can easily construct examples with eigenvalues $\lambda$ of  $\mathcal A$
satisfying $\arg\lambda\in\bigl(\frac{\pi}{2},\frac{3\pi}{4}\bigr)$
and $|\,$Re$\,\lambda|<\frac{1}{\gamma}$\,.  For example, let $A_0$ be
a positive definite operator with compact resolvent and smallest eigenvalue $1/2$.
For the choice $D=A_0$, we have $\gamma =1$
and $\lambda_0=-\frac{1}{4}+i\frac{\sqrt{7}}{4}$ is an eigenvalue of $\mathcal A$
which satisfies $\Re\lambda_0=-1/4>-\frac{1}{\gamma}=-1$ and
$\arg\lambda_0\in\bigl(\frac{\pi}{2},\frac{3\pi}{4}\bigr)$.

Another application of Theorem \ref{theo100} results in interlacing
properties of eigenvalues of two different second-order problems
with coefficients that satisfy a specific order relation. This
is the content of the following theorem.

% -------------------------------------------------------------------
\begin{theorem}\label{comparison1}
Let the forms $\fra_0$, $\hat\fra_0$, $\frd$ and $\hat\frd$ in the Hilbert space $H$
be given so that $\fra_0$, $\frd$ and $\hat\fra_0$, $\hat\frd$, respectively,
satisfy assumptions {\rm\textbf{(F1)}--\textbf{(F3)}}.
Assume that $\cD(\fra_0)=\cD(\hat\fra_0)$ and
\begin{equation}\label{Bischi}
  \fra_0[x] \ge \hat\fra_0[x], \qquad \frd[x] \le \hat\frd[x]
  \qquad\text{for}\;\;x\in\cD(\fra_0).
\end{equation}
Let $\hat\cA$, $\hat\delta$, $\hat\gamma$, $\hat\delta_0$, $\hat\gamma_0$,
$\hat\frt$, $\hat p_{\pm}$, and  $\hat\alpha$ be defined as in
\eqref{spaetabends}--\eqref{spaetabends2},
\eqref{Elbersfeld}, \eqref{Elbersfeldberg}, \eqref{Kuchen}, \eqref{Wuppi5},
and \eqref{scalar_qu_eq17}--\eqref{def_alpha}, respectively,
where $\fra_0$ is replaced by $\hat\fra_0$ and $\frd$  by $\hat\frd$.
Then we have
\begin{equation}\label{gammagammahat}
  \gamma \le \hat\gamma, \qquad \gamma_0 \le \hat\gamma_0, \qquad
  \delta \le \hat\delta, \qquad \delta_0 \le \hat\delta_0.
\end{equation}
Let
\[
  \Delta \defeq (a,0] \qquad\text{with}\quad a\ge\max\{\alpha,\hat\alpha\}.
\]
Assume now that $\sigma(\cA)\cap \Delta$ is non-empty;
then also $\sigma(\hat\cA)\cap \Delta$ is non-empty.
Let $(\lambda_n)_{n=1}^{N}$ and $(\hat\lambda_n)_{n=1}^{\hat N}$,
$N, \hat N\in \mathbb N \cup \{\infty\}$, be the eigenvalues of $\cA$ and $\hat\cA$,
respectively, in the interval $\Delta$, both arranged in non-increasing order
and counted according their multiplicities.
Then $N\le\hat N$ and
\begin{equation}\label{rentier}
  \lambda_{n} \le \hat\lambda_{n}
  \qquad \text{for}\;\;n\in\NN,\,n\le N.
\end{equation}
\end{theorem}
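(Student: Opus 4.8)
The plan is to reduce the interlacing assertion to a pointwise comparison of the generalized Rayleigh functionals $p_+$ and $\hat p_+$, fed into the max-min principle of Theorem~\ref{theo100}. First I would dispose of \eqref{gammagammahat}. Assumption \eqref{Bischi} gives, for every $y\in H_{1/2}\setminus\{0\}$,
\[
  \frac{\frd[y]}{\fra_0[y]} \le \frac{\hat\frd[y]}{\hat\fra_0[y]}\,,
\]
since $\frd[y]\le\hat\frd[y]$ and $0<\hat\fra_0[y]\le\fra_0[y]$. The four inequalities then follow immediately from the monotonicity (in this Rayleigh quotient) of the variational expressions \eqref{deltainf}, \eqref{gammasup} and \eqref{delta0inf}; in the finite-dimensional case they hold trivially by \eqref{Elbersfeldberg}.

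The core of the argument is the elementary estimate
\[
  \frt(\lambda)[x] \ge \hat\frt(\lambda)[x], \qquad x\in H_{1/2},\ \lambda\le0,
\]
obtained by subtracting the forms: $\frt(\lambda)[x]-\hat\frt(\lambda)[x]=\lambda(\frd[x]-\hat\frd[x])+(\fra_0[x]-\hat\fra_0[x])$, where both summands are non-negative because $\lambda\le0$, $\frd[x]\le\hat\frd[x]$ and $\fra_0[x]\ge\hat\fra_0[x]$. Next I would invoke Theorem~\ref{theo100} for both systems on $\Delta=(a,0]$; this is legitimate since $a\ge\max\{\alpha,\hat\alpha\}$ yields $\Delta\subset(\alpha,0]$ and $\Delta\subset(\hat\alpha,0]$, with $\max\Delta=0$. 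In particular $(\nearrow)$ holds on $\Delta$ for both $T$ and $\hat T$, so \eqref{Trip2} is available for each. Fixing $\lambda\in\Delta$, I translate the form inequality into a functional implication: if $\hat p_+(x)<\lambda$ then $\hat\frt(\lambda)[x]>0$, whence $\frt(\lambda)[x]\ge\hat\frt(\lambda)[x]>0$, and thus $p_+(x)<\lambda$. Contrapositively,
\[
  p_+(x)\ge\lambda \implies \hat p_+(x)\ge\lambda \qquad\text{for all } \lambda\in\Delta,\ x\in H_{1/2}\setminus\{0\}.
\]

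Now I would run the max-min. Let $n\le N$ (so $n\le\dim H$). By \eqref{minmax_re_al}, with the maximum attained (Remark~\ref{rem_minmax2}), there is an $n$-dimensional subspace $L_0\subset H_{1/2}$ with $\min_{x\in L_0\setminus\{0\}}p_+(x)=\lambda_n$, so $p_+(x)\ge\lambda_n$ on $L_0\setminus\{0\}$. Since $\lambda_n\in\Delta$, the implication above gives $\hat p_+(x)\ge\lambda_n$ on $L_0\setminus\{0\}$, hence
\[
  \max_{\substack{L\subset H_{1/2}\\ \dim L=n}}\ \min_{x\in L\setminus\{0\}}\hat p_+(x) \ge \lambda_n > a=\inf\Delta.
\]
If $\hat N<\infty$ and $n>\hat N$, then \eqref{Curibau} applied to $\hat T$ forces this same quantity to be $\le\inf\Delta=a$, a contradiction; therefore $n\le\hat N$. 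Consequently the left-hand side equals $\hat\lambda_n$ by \eqref{minmax_re_al} for $\hat\cA$, so $\hat\lambda_n\ge\lambda_n$. Ranging $n$ over $1,\dots,N$ yields $N\le\hat N$ and the interlacing \eqref{rentier}; in particular, if $\sigma(\cA)\cap\Delta\ne\emptyset$ then $N\ge1$, so $\hat N\ge1$ and $\sigma(\hat\cA)\cap\Delta\ne\emptyset$.

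The main obstacle is not the form inequality but transferring it cleanly to the functionals: $p_+$ and $\hat p_+$ may equal $-\infty$, and the naive bound $p_+(x)\le\hat p_+(x)$ can fail outside $\Delta$ (a smaller root $p_-(x)$ of the unhatted polynomial might lie to the right of $\hat p_+(x)$). Routing everything through the sign equivalences \eqref{Trip2} for $\lambda\in\Delta$ avoids this, since one only ever compares the signs of $\frt(\lambda)[x]$ and $\hat\frt(\lambda)[x]$; and the hypothesis $a\ge\max\{\alpha,\hat\alpha\}$ is precisely what guarantees that $(\nearrow)$, hence \eqref{Trip2}, is valid for both systems simultaneously on $\Delta$. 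The remaining care is the bookkeeping $N\le\hat N$, handled by pitting the max-min lower bound against the upper bound \eqref{Curibau} for the hatted system.
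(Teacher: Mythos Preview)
Your proof is correct and follows essentially the same route as the paper: establish the form inequality $\frt(\lambda)[x]\ge\hat\frt(\lambda)[x]$ for $\lambda\le0$, convert it into a comparison of the generalized Rayleigh functionals, and feed this into the max--min characterization \eqref{minmax_re_al} together with the complementary bound \eqref{Curibau} to obtain $N\le\hat N$ and $\lambda_n\le\hat\lambda_n$. The treatment of \eqref{gammagammahat} via \eqref{deltainf}--\eqref{delta0inf} is also the same.

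One remark: your caution that ``the naive bound $p_+(x)\le\hat p_+(x)$ can fail outside $\Delta$'' is unnecessary. From $\frt(\lambda)[x]\ge\hat\frt(\lambda)[x]$ for all $\lambda\le0$ one gets $p_+(x)\le\hat p_+(x)$ globally: if $p_+(x)=-\infty$ this is trivial, and otherwise $p_+(x)<0$ and $\hat\frt(p_+(x))[x]\le\frt(p_+(x))[x]=0$, so $\hat\frt(\cdot)[x]$ has real roots and $p_+(x)$ lies between them, i.e.\ $p_+(x)\le\hat p_+(x)$. The paper uses this direct global inequality, while you route the comparison through the sign implications \eqref{Trip2} restricted to $\Delta$; both are valid and lead to the same conclusion.
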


\begin{proof}
The inequalities in \eqref{gammagammahat} follow from \eqref{Bischi}, \eqref{deltainf},
\eqref{gammasup}, \eqref{delta0inf} and \eqref{Elbersfeldberg}, e.g.\
\[
  \gamma = \sup_{y\in H_{1/2}\setminus\{0\}}\frac{\frd[y]}{\fra_0[y]}
  \le \sup_{y\in H_{1/2}\setminus\{0\}}\frac{\hat\frd[y]}{\hat\fra_0[y]} = \hat\gamma.
\]
The relations in \eqref{Bischi} imply that
\[
  \frt(\lambda)[x] \ge \hat\frt(\lambda)[x],
  \qquad x\in H_{\frac12},\;\lambda\in(-\infty,0].
\]
It follows from \eqref{Trip2} that $p_+(x)\le\hat p_+(x)$ for $x\in\hhalf\setminus\{0\}$
and hence
\begin{equation}\label{Stockholm}
  \mu_n \defeq \sup_{\substack{L\subset H_{1/2} \\ \dim L = n}}
  \;\;\min_{x\in L\setminus\{0\}}\;\; p_+(x)
  \le \sup_{\substack{L\subset H_{1/2} \\ \dim L = n}}
  \;\;\min_{x\in L\setminus\{0\}}\;\; \hat p_+(x)
  \eqdef \hat\mu_n.
\end{equation}
Assume that $\cA$ has at least $m$ eigenvalues in $\Delta$.  Then, by Theorem \ref{theo100}, $\lambda_m=\mu_m>a$.
If $\hat\cA$ had less than $m-1$ eigenvalues in $\Delta$, then $\hat\mu_m\le a$
by \eqref{Curibau}, which is a contradiction to \eqref{Stockholm}.
Hence the implication
$\sigma(\cA)\cap\Delta\ne\emptyset\;\Rightarrow\;\sigma(\hat\cA)\cap\Delta\ne\emptyset$
and the inequality $N\le\hat N$ are true.
Finally, the inequality in \eqref{rentier} follows from \eqref{minmax_re_al}
and \eqref{Stockholm}.
\end{proof}

% *******************************************************************
\section{Example: beam with damping}
\label{sec:example}
% *******************************************************************

We consider a beam of length $1$ and study transverse vibrations only.
Let $u(r,t) $ denote the deflection of the beam  from its rigid body motion
 at time $t$ and position $r$. We consider
 for the beam deflection  a damping model which
leads to the following description of the vibrations
where $a_0>0$ is a real constant and $d\in C^1[0,1]$ with $\min_{r\in[0,1]} d(r)>0$:
\begin{equation}\label{beamIlm}
   \frac{\partial ^2 u} {\partial t^2} +
   a_0 \frac{\partial ^4 u } {\partial r^4}
   + \frac{\partial^2 }{ \partial t\partial r }\left[ d \frac{\partial u}{\partial r }\right]
   = 0,
   \hspace{2em} r \in (0,1), \, t > 0.
\end{equation}
%We further assume
%\begin{equation}\label{Bayern}
%d_0^2 >4a_0.
%\end{equation}
Assuming that the beam is pinned, free to rotate and does not experience any
torque at both ends, we have for all $t>0$ the following boundary conditions
\begin{equation} \label{bcsIlm}
  u\big|_{r=0} = u\big|_{r=1} = \frac{\partial ^2 u } {\partial r^2 }\bigg|_{r=0}
  = \frac{\partial ^2 u } {\partial r^2 }\bigg|_{r=1} = 0.
\end{equation}
We consider the partial differential equation
\eqref{beamIlm}--\eqref{bcsIlm} as a
second-order problem in the Hilbert space $H = L^2 (0,1)$.
In order to formulate this beam equation as in \eqref{sys}, we introduce
the forms $\fra_0$ and $\frd$ defined for
$x,y$ from the form domains $\cD(\fra_0) =\cD(\frd)=
H^2(0,1)\cap H_0^1(0,1)$ as
\[
  \fra_0[x,y] \defeq a_0\int_0^1 x''(r)\overline{y''(r)} \rd r
  \qquad \text{and} \qquad
  \frd[x,y] \defeq \int_0^1d(r) x'(r)\overline{y'(r)} \rd r.
\]
Then \eqref{beamIlm}--\eqref{bcsIlm} corresponds to
\[
  \langle \ddot{u}(t),y\rangle + \fra_0[u(t),y] + \frd[\dot{u}(t),y]=0
  \qquad \text{for all } y \in \cD(\fra_0) =\cD(\frd).
\]

Set
\[
  \dmin \defeq \min_{r\in[0,1]}d(r), \qquad \dmax\defeq \max_{r\in[0,1]}d(r).
\]
For $x \in \cD(\fra_0)$ we have
\[
  \fra_0[x] = a_0 \langle x'', x''\rangle \geq a_0 \pi^4 \|x\|^2,
\]
which shows \textbf{(F1)}.  Using again $\|x''\| \geq \pi^2 \|x\|$
we obtain for $x\in \cD(\fra_0)$ that
\begin{equation*}
\begin{split}
  \fra_0[x] & = a_0\|x''\|^2 \geq a_0 \pi^2 \|x''\|\|x\| \geq
  a_0 \pi^2 \left| \int_0^1 x''(r) \overline{x(r)} \rd r\right| \\
  & =a_0 \pi^2 \int_0^1 \bigl|x'(r)\bigr|^2\rd r
  \geq \frac{a_0\pi^2}{\dmax}\int_0^1 d(r)\bigl|x'(r)\bigr|^2\rd r
  =  \frac{a_0\pi^2}{\dmax} \frd[x].
\end{split}
\end{equation*}
Thus  \textbf{(F2)} holds. In order to show  \textbf{(F3)}
we introduce the operator $A_0$ associated with $\fra_0$ via
the the First Representation Theorem \cite[Theorem~VI.2.1]{K} as in
\eqref{reprA0}. It is easy to see that $A_0$ has the form
\begin{align*}
  A_0 =  a_0 \frac{\rd^4}{\rd r^4},
  \quad
  \dom(A_0) = \left\{ z \in \cD(\fra_0) \mid  z''
  \in \cD(\fra_0) \right\}.
\end{align*}
Obviously, $A_0$ satisfies assumption \textbf{(F3)}. We define the Hilbert space
$H_{\frac{1}{2}}$ as in \eqref{Cambridge11}; then
$H_{\frac{1}{2}} = \cD(\fra_0) = \cD(\frd)$.
Moreover, we define the damping operator as
\begin{align*}
   D \defeq -\frac{\rd}{\rd r} \left[d\frac{\rd}{\rd r}\right].
\end{align*}
Due to the fact that $d\in C^1[0,1]$, $D$ is a linear bounded operator from $ H_{\half}$ to $H$. For $x\in H_\half$ we have
\begin{equation*}
   \langle Dx,x\rangle = \langle dx', x'\rangle = \frd[x].
\end{equation*}
Since $DA_0^{-1/2}$ is a bounded operator in $H$
and $A_0^{-1/2}$ is a compact operator in $H$, we see that
$A_0^{-1/2}DA_0^{-1/2}$ is a compact operator in $H$.
From this we obtain
\begin{equation*}
  \sess\bigl(A_0^{-1/2}DA_0^{-1/2}\bigr)=\{0\}
\end{equation*}
and hence $\gamma_0=\delta_0=0$.  This, together with Proposition~\ref{Ilmenau},
yields
\begin{equation}\label{eqn1-Ilm}
  \sess(\cA) =\emptyset.
\end{equation}
Finally, we apply the results of this paper to the damped beam equation.

\begin{theorem}
Assume that
\begin{equation}\label{Bayern}
  \dmin^2 \ge 4a_0.
%  \qquad\text{and}\qquad
%  d_0+\sqrt{d_0^2-4 a_0}> \|d\|_\infty.
\end{equation}
Then $\cD^* \ne \emptyset$ $($cf.\ \eqref{Dstar}$)$ and
the number $\alpha$ from \eqref{def_alpha} satisfies $\alpha\le-\dmin\frac{\pi^2}{2}$.
%\begin{equation}\label{Bayernalpha}
% \alpha \leq -\dmin\frac{\pi^2}{2}\,.
% \end{equation}
The set %$\sigma({\mathcal A})\cap (\alpha,0)$
\[
  \sigma(\cA)\cap\Bigl(-\frac{\dmin\pi^2}{2},0\Bigr)
\]
is non-empty and consists only of a finite  sequence
of isolated semi-simple eigenvalues of finite multiplicity
of $\cA$ counted according to their multiplicities:
$\lambda_1 \geq \lambda_2 \geq \ldots \geq \lambda_N$
for some $N\in \mathbb N$. The $n$th eigenvalue $\lambda_n$,
$1\leq n\leq N$, satisfies \eqref{minmax_re_al} in Theorem~\ref{theo100}
and the following inequalities:
\begin{equation}\label{beamupperbound}
  \lambda_n \le \frac{-\dmax+\sqrt{\dmax^2-4a_0}\,}{2}\cdot\pi^2n^2,
  \qquad 1\le n\le N,
\end{equation}
and
\begin{equation}\label{beamlowerbound}
  \lambda_n \ge \frac{-\dmin+\sqrt{\dmin^2-4a_0}\,}{2}\cdot\pi^2n^2,
%  \qquad n\in\NN\;\text{such that}\;\;n^2\le \frac{\dmin}{\dmin-\sqrt{\dmin^2-4a_0}}.
  \qquad n\in\NN\;\;\text{such that}\;\;n^2\le \frac{1}{1-\sqrt{1-\frac{4a_0}{\dmin^2}}\,}\,.
\end{equation}
%\[
%  \lambda_n \ge \frac{-\dmin+\sqrt{\dmin^2-4a_0}}{2}\cdot\pi^2n^2,
%  \qquad 1\le n \le
%  \left\lfloor\sqrt{\frac{1}{1-\sqrt{1-\frac{4a_0}{\dmin^2}}\,}}\right\rfloor\,;
%\]
\end{theorem}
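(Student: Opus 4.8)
The plan is to first dispatch the two elementary claims about $\cD^*$ and $\alpha$ by hand, then feed the interval $\Delta=\bigl(-\frac{\dmin\pi^2}{2},0\bigr]$ into Theorem~\ref{theo100}, and finally obtain the two-sided bounds from the interlacing Theorem~\ref{comparison1} applied to the two constant-coefficient comparison beams.

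Throughout I would work with the orthonormal system $e_k(r)=\sqrt2\,\sin(k\pi r)$ of $H$, for which $\fra_0[e_k]=a_0(k\pi)^4$ and $\|e_k'\|^2=(k\pi)^2$, together with the bounds $\dmin\|x'\|^2\le\frd[x]\le\dmax\|x'\|^2$ and the Poincar\'e inequality $\|x'\|^2\ge\pi^2\|x\|^2$ valid on $\hhalf=H^2(0,1)\cap H_0^1(0,1)$. For $\cD^*\neq\emptyset$ I test $x=e_1$: here $\frd[e_1]\ge\dmin\pi^2$ while $2\|e_1\|\sqrt{\fra_0[e_1]}=2\pi^2\sqrt{a_0}$, so the defining inequality in \eqref{Dstar} holds exactly because $\dmin^2\ge4a_0$, whence $e_1\in\cD^*$. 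For $\alpha$ I use that $\gamma_0=0$, so by \eqref{def_alpha} $\alpha=\sup_{x\in\cD^*}p_-(x)$; from \eqref{scalar_qu_eq17} one has $p_-(x)\le-\frac{\frd[x]}{2\|x\|^2}\le-\frac{\dmin\|x'\|^2}{2\|x\|^2}\le-\frac{\dmin\pi^2}{2}$, and taking the supremum gives $\alpha\le-\frac{\dmin\pi^2}{2}$.

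Since $\alpha\le-\frac{\dmin\pi^2}{2}$, the interval $\Delta=\bigl(-\frac{\dmin\pi^2}{2},0\bigr]$ satisfies $\Delta\subset(\alpha,0]$ and $\max\Delta=0$, so Theorem~\ref{theo100} applies directly and yields the isolatedness, semi-simplicity and finite multiplicity of the eigenvalues in $\Delta$ as well as the max-min characterization \eqref{minmax_re_al}; finiteness of $N$ follows because $\gamma_0=0$ makes the accumulation alternative $\alpha=-\frac1{\gamma_0}=\inf\Delta$ impossible (equivalently $\sess(\cA)=\emptyset$ by \eqref{eqn1-Ilm}). To see that $\sigma(\cA)\cap\Delta$ is non-empty I again test $e_1$: evaluating the upward parabola $\lambda\mapsto\frt(\lambda)[e_1]$ at $\lambda_\ast=-\frac{\dmin\pi^2}{2}$ and using $\frd[e_1]\ge\dmin\pi^2$ gives $\frt(\lambda_\ast)[e_1]\le\frac{\pi^4}{4}\bigl(4a_0-\dmin^2\bigr)\le0$, so $\lambda_\ast$ lies between the two (negative) roots and $p_+(e_1)\ge\inf\Delta$. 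Hence $\sup_x p_+(x)>\inf\Delta$, and the contrapositive of \eqref{Curibau} with $n=1$ forces $N\ge1$. I expect the genuinely delicate point to be the \emph{strictness} here: at the degenerate boundary $\dmin^2=4a_0$ with $d$ constant one checks that $p_+(e_1)=\inf\Delta$ and the intersection can in fact be empty, so the argument really needs $\dmin^2>4a_0$ (or $d$ non-constant, which makes $\frd[e_1]>\dmin\pi^2$ and restores strictness).

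Finally, for the two-sided estimates I would compare with the beams obtained by freezing $d$ at the constants $\dmax$ and $\dmin$; both satisfy \textbf{(F1)}--\textbf{(F3)} and share the domain $\cD(\fra_0)$, their modes $e_k$ decouple, and on the $k$th mode the real eigenvalues are the roots of $\lambda^2+c(k\pi)^2\lambda+a_0(k\pi)^4=0$, the larger being $\frac{-c+\sqrt{c^2-4a_0}}{2}(k\pi)^2$ for $c\in\{\dmin,\dmax\}$. Taking the constant-$\dmax$ beam as the $\widehat{}$-system (so $\frd\le\hat\frd$ and $\fra_0=\hat\fra_0$) in \eqref{rentier} gives $\lambda_n\le\frac{-\dmax+\sqrt{\dmax^2-4a_0}}{2}\pi^2n^2$, which is \eqref{beamupperbound}; taking the constant-$\dmin$ beam as the base system and the given beam as the $\widehat{}$-system gives $\lambda_n\ge\frac{-\dmin+\sqrt{\dmin^2-4a_0}}{2}\pi^2n^2$. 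The main bookkeeping here — and the source of the index range in \eqref{beamlowerbound} — is that this lower comparison value is an eigenvalue lying in $\Delta$ precisely when it is $\ge-\frac{\dmin\pi^2}{2}$, and a one-line computation shows this is equivalent to $n^2\le\frac1{1-\sqrt{1-4a_0/\dmin^2}}$; one must also note that only the larger modal roots are relevant, since the smaller roots $\frac{-c-\sqrt{c^2-4a_0}}{2}(k\pi)^2$ all lie at or below $\inf\Delta$, and check that the constants $\alpha$ and $\hat\alpha$ for all three systems lie below $-\frac{\dmin\pi^2}{2}$ so that Theorem~\ref{comparison1} genuinely applies on the common interval $\Delta$.
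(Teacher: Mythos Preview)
Your proposal is correct and follows essentially the same route as the paper: the same test vector $e_1$ for $\cD^*\ne\emptyset$, the same chain $p_-(x)\le-\frd[x]/(2\|x\|^2)\le-\dmin\|x'\|^2/(2\|x\|^2)\le-\dmin\pi^2/2$ for the bound on $\alpha$, the same check that $\alpha_{\rm min},\alpha_{\rm max}\le-\dmin\pi^2/2$, and Theorem~\ref{comparison1} applied to the two frozen-coefficient beams $\frd_{\rm min/max}=d_{\rm min/max}\langle\,\cdot\,',\cdot\,'\rangle$ for the two-sided estimates \eqref{beamupperbound}--\eqref{beamlowerbound}. The only cosmetic difference is that the paper extracts non-emptiness of $\sigma(\cA)\cap\Delta$ from the comparison itself (it shows $N\ge\Nmin\ge1$ because $\lambdamin_1=\frac{-\dmin+\sqrt{\dmin^2-4a_0}}{2}\pi^2$ lies in $\Delta$), whereas you argue directly via $p_+(e_1)$ and the contrapositive of \eqref{Curibau}; both reduce to the same inequality $\frt(-\dmin\pi^2/2)[e_1]\le0$. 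Your flag on the borderline case $\dmin^2=4a_0$ with $d$ constant is well taken --- there $\lambdamin_1$ sits exactly at $\inf\Delta$ and the paper's proof glosses over this edge case in the same way.
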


Note that the inequality in \eqref{beamlowerbound} for $\lambda_n$ holds at least for $n=1$.

%\textbf{Old version:} \\
%and the first eigenvalue $\lambda_1$ satisfies
%\begin{equation}\label{firstev}
%  \lambda_1 \ge \frac{1}{2}\left(-\|d\|_\infty {\pi^2}+{\pi^2}\sqrt{d_0^2-4a_0}\right).
%\end{equation}

\begin{proof}
We introduce the forms $\frdmin$ and $\frdmax$ by
\[
  \frd_{\rm min/max}[x,y] \defeq d_{\rm min/max}\int_0^1 x'(r)\overline{y'(r)}\rd r,
  \qquad x,y\in\hhalf,
\]
%\[
%  \frdmin[x,y] \defeq \dmin\int_0^1 x'(r)\overline{y'(r)}\rd r, \qquad
%  \frdmax[x,y] \defeq \dmax\int_0^1 x'(r)\overline{y'(r)}\rd r, \qquad x,y\in\hhalf,
%\]
the form polynomials $\frtmin$ and $\frtmax$ by
\[
  \frt_{\rm min/max}(\lambda)[x,y] \defeq \lambda^2\langle x,y\rangle
  + \lambda\frd_{\rm min/max}[x,y] + \fra_0[x,y], \qquad x,y\in\hhalf,
\]
and the corresponding operator functions $T_{\rm min}$ and $T_{\rm max}$  as in Proposition \ref{HotelCentral}.
Let $S\defeq-\frac{\rd^2}{\rd r^2}$ in $L^2(0,1)$
with domain $\cD(S)=H^2(0,1)\cap H_0^1(0,1)$,
which has spectrum $\sigma(S)=\{n^2\pi^2\mid n\in\NN\}$.
Since we can write
\[
  T_{\rm min/max}(\lambda) = \lambda^2+\lambda d_{\rm min/max}S+a_0S^2,
\]
we can use the spectral mapping theorem to obtain
\begin{align}
  \sigma(\Tmin) &= \bigl\{\lambda\in\CC\mid \lambda^2+\lambda\dmin n^2\pi^2+a_0n^4\pi^4=0
  \;\;\text{for some}\;\;n\in\NN\bigr\}
    \notag\\[1ex]
  &= \Biggl\{\frac{-\dmin\pm\sqrt{\dmin^2-4a_0}}{2}\cdot n^2\pi^2\,\bigg|\; n\in\NN\Biggr\}
  \subset (-\infty,0).
    \label{lambdanmin}
\end{align}
In a similar way one obtains a description of $\sigma(\Tmax)$.

Define $p_\pm$, $\pmin_\pm$, $\pmax_\pm$, $\cD^*$, $\cD^*_{\rm min}$, $\cD^*_{\rm max}$,
$\alpha$, $\alpha_{\rm min}$, $\alpha_{\rm max}$ as in Definition~\ref{Gustav2AAA}
corresponding to $T$, $\Tmin$ and $\Tmax$, respectively.
%Since $\frdmin[x]\le\frd[x]\le\frdmax[x]$ for $x\in H_{\frac12}$, we have
%\[
%  \frtmin(\lambda)[x] \ge \frt(\lambda)[x] \ge \frtmax(\lambda)[x]
%  \qquad\text{for}\;\;x\in H_{\frac12},\;\lambda\le0.
%\]
Denote by $e_1$ the eigenvector to the smallest eigenvalue, $\pi^2$, of $S$
with $\|e_1\|=1$, i.e.\ $e_1=\sqrt{2}\sin(\pi\, \cdot)$
and $Se_1=\pi^2 e_1$.
It follows from \eqref{Bayern} that
\begin{align*}
  \frd[e_1]-2\|e_1\|\sqrt{\fra_0[e_1]}
  &\ge \frdmin[e_1]-2\sqrt{\fra_0[e_1]} \\[1ex]
  &= \dmin\langle Se_1,e_1\rangle-2\sqrt{a_0}\|Se_1\|
  = \dmin\pi^2-2\sqrt{a_0}\pi^2
  \ge 0,
\end{align*}
which by \eqref{Dstar} implies that $\cD^*\ne\emptyset$.
Since $\gamma_0=0$, we have
\begin{align*}
  \alpha &= \sup_{x\in\cD^*}p_-(x)
  = \sup_{x\in\cD^*}\frac{-\frd[x]-\sqrt{\bigl(\frd[x]\bigr)^2-4\|x\|^2\fra_0[x]}\,}{2\|x\|^2}
    \\[1ex]
  &\le \sup_{x\in\cD^*}\frac{-\frd[x]}{2\|x\|^2}
  \le \sup_{x\in H_{1/2}}\frac{-\frdmin[x]}{2\|x\|^2}
  = -\inf_{x\in H_{1/2}}\frac{\dmin\|x'\|^2}{2\|x\|^2}
  = -\frac{\dmin\pi^2}{2}\,.
\end{align*}
In the same way one obtains that $\alpha_{\rm min},\alpha_{\rm max}\le-\frac{\dmin\pi^2}{2}$.

Set $\Delta\defeq\bigl(-\frac{\dmin\pi^2}{2},0\bigr]$ and let %$(\lambda_n)_{n=1}^N$,
$(\lambdamin_n)_{n=1}^{\Nmin}$ and $(\lambdamax_n)_{n=1}^{\Nmax}$ be the
eigenvalues of %$T$,
$\Tmin$ and $\Tmax$, respectively, in the interval $\Delta$ ordered non-increasingly
and counted with multiplicities.
We can apply Theorem~\ref{comparison1} to the pairs $\Tmin$, $T$ and
$T$, $\Tmax$, which implies that $\Nmin\le N\le\Nmax$ and
\begin{equation}\label{compBeam}
\begin{alignedat}{2}
  \lambdamin_n &\le \lambda_n, \qquad & & 1\le n\le \Nmin, \\[1ex]
  \lambda_n &\le \lambdamax_n, \qquad & & 1\le n\le N.
\end{alignedat}
\end{equation}
It follows from \eqref{lambdanmin} that
\[
  \lambdamin_n = \frac{-\dmin+\sqrt{\dmin^2-4a_0}\,}{2}\cdot n^2\pi^2, \qquad
  \lambdamax_n = \frac{-\dmax+\sqrt{\dmax^2-4a_0}\,}{2}\cdot n^2\pi^2.
\]
Moreover, $\Nmin$ is the largest positive integer such that
\[
  \frac{-\dmin+\sqrt{\dmin^2-4a_0}}{2}\cdot\Nmin^2\pi^2 \ge -\frac{\dmin\pi^2}{2}\,,
\]
where the latter inequality is equivalent to
\begin{equation}\label{Nmin}
  \Nmin^2 \le \frac{\dmin}{\dmin-\sqrt{\dmin^2-4a_0}}
  = \frac{1}{1-\sqrt{1-\frac{4a_0}{\dmin^2}}\,}\,.
\end{equation}
Now the inequalities in \eqref{compBeam} imply \eqref{beamupperbound}
and \eqref{beamlowerbound}.
Since the right-hand side of \eqref{Nmin} is greater than or equal to $1$,
we have $N\ge\Nmin\ge1$.  Hence $\sigma(\cA)\cap\Delta\ne\emptyset$.
Moreover, $N$ is finite because $\sess(\cA)=\emptyset$.
\end{proof}

\section*{Acknowledgements}
Finally, the authors like to thank the anonymous referee for suggestions
that improved the exposition of the paper.

%%%%%%%% Bibliography %%%%%%%%%%%%%%%%%%%%%%%%%%%%%%%%%%%%%%%%%%%%%%%%%


\begin{thebibliography}{20}

\bibitem{AI}
\textsc{T.\,Ya.~Azizov and I.\,S.~Iokhvidov},
\textit{Linear Operators in Spaces with an Indefinite Metric},
John Wiley \& Sons, 1989.

%\bibitem{AJT}
%{\sc T.\,Ya.~Azizov, P.~Jonas and C.~Trunk},
%Spectral points of type $\pi_{+}$ and $\pi_{-}$ of self-adjoint operators in %Krein spaces,
%\textit{J.\ Funct.\ Anal.} \textbf{226} (2005), 114--137.

\bibitem{bi}
\textsc{H.\,T.~Banks and K.~Ito},
A unified framework for approximation in inverse problems for distributed parameter systems,
\textit{Control Theory and Adv.\ Tech.} \textbf{4} (1988), 73--90.

\bibitem{biw}
\textsc{H.\,T.~Banks, K.~Ito and Y.~Wang},
Well posedness for damped second order systems with unbounded input operators,
\textit{Differential Integral Equations} \textbf{8} (1995), 587--606.

%\bibitem{BSY}
%{\sc H.\,T.~Banks, R.\,C.~Smith and Y.~Wang},
%The modeling of piezoceramic patch interactions with
%shells, plates, and beams,
%\textit{Quart.\ Appl.\ Math.} \textbf{53} (1995), 353--381.


\bibitem{BEL00}
\textsc{P.~Binding, D.~Eschw\'e and H.~Langer},
Variational principles for real eigenvalues of self-adjoint operator pencils,
\emph{Integral Equations Operator Theory} \textbf{38} (2000), 190--206.

\bibitem{B}
\textsc{J.~Bognar},
\textit{Indefinite Inner Product Spaces},
Springer, 1974.

\bibitem{CLL}
\textsc{S.~Chen, K.~Liu and Z.~Liu},
Spectrum and stability for elastic systems with global or local Kelvin--Voigt damping,
\textit{SIAM J.\ Appl.\ Math.} \textbf{59} (1998), 651--668.

\bibitem{D55}
\textsc{R.\,J.~Duffin},
A minimax theory for overdamped networks,
\textit{J.\ Rational Mech.\ Anal.} \textbf{4} (1955), 221--233.

\bibitem{EE}
\textsc{D.\,E.~Edmunds and W.\,D.~Evans},
\textit{Spectral Theory and Differential Operators},
Oxford University Press, 1987.

%\bibitem{E72}
%{\sc J.~Eisenfeld},
%On symmetrization and roots of quadratic eigenvalue problems.
%\textit{J.\ Funct.\ Anal.} \textbf{9} (1972), 410--422.

%\bibitem{EN}
%{\sc K.-J.~Engel and R.~Nagel},
%\textit{One-parameter Semigroups for Linear Evolution Equations},
%Springer, 2000.

\bibitem{EL}
\textsc{D.~Eschw\'e and M.~Langer},
Variational principles for eigenvalues of self-adjoint operator functions,
\textit{Integral Equations Operator Theory} \textbf{49} (2004), 287--321.

\bibitem{HS}
\textsc{R.\,O.~Hryniv and A.\,A.~Shkalikov},
Exponential stability of semigroups related to operator models in mechanics,
\textit{Math.\ Notes} \textbf{73} (2003), 657--664.

\bibitem{JMT}
\textsc{B.~Jacob, K.~Morris and C.~Trunk},
Minimum-phase infinite-dimensional second-order systems,
\textit{IEEE Transactions on Automatic Control} \textbf{52} (2007), 1654--1665.

\bibitem{jatr}
\textsc{B.~Jacob and C.~Trunk},
Location of the spectrum of operator matrices which are associated to
second order equations,
\textit{Oper.\ Matrices} \textbf{1} (2007), 45--60.

\bibitem{JT09}
\textsc{B.~Jacob and C.~Trunk},
Spectrum and analyticity of semigroups arising in elasticity theory and
hydromechanics,
\textit{Semigroup Forum} \textbf{79} (2009), 79--100.

\bibitem{JTW}
\textsc{B.~Jacob, C.~Trunk and M.~Winklmeier},
Analyticity and Riesz basis property of semigroups associated to damped vibrations,
\textit{J.\ Evol.\ Equ.} \textbf{8} (2008), 263--281.

\bibitem{K}
\textsc{T.~Kato},
\textit{Perturbation Theory for Linear Operators},
Second Edition, Springer, 1976.

%\bibitem{L82}
%{\sc H.~Langer},
%Spectral functions of definitizable operators in Krein spaces,
%in: Functional Analysis (Dubrovnik, 1981),
%\emph{Lecture Notes in Math.}, vol.~948, pp.~1--46, 1982.

%\bibitem{LT06}
%{\sc M.~Langer and C.~Tretter},
%Variational principles for eigenvalues of the Klein--Gordon equation,
%\textit{J.\ Math.\ Phys.} \textbf{47} (2006), 103506, 18~pp.

\bibitem{las}
\textsc{I.~Lasiecka},
Stabilization of wave and plate equations with nonlinear dissipation
on the boundary,
\textit{J.\ Differential Equations} \textbf{79} (1989), 340--381.

%\bibitem{P89}
%{\sc V.\,N.~Pivovarchik},
%Eigenvalues of a quadratic operator pencil (Russian),
%\textit{Funktsional.\ Anal.\ i Prilozhen.} \textbf{23} (1989), 80--81;
%translation in \textit{Funct.\ Anal.\ Appl.} \textbf{23} (1989), 70--72.

%\bibitem{R64}
%{\sc E.\,H.~Rogers},
%A minimax theory for overdamped systems,
%\textit{Arch.\ Rational Mech.\ Anal.} \textbf{16} (1964), 89--96.

\bibitem{T}
\textsc{C.~Trunk},
Spectral theory for operator matrices related to models in mechanics,
\textit{Math.\ Notes} \textbf{83} (2008), 843--850.

\bibitem{TWII}
\textsc{M.~Tucsnak and G.~Weiss},
How to get a conservative well-posed system out of thin air, Part II,
\textit{SIAM J.\ Control Optim.} \textbf{42} (2003), 907--935.

\bibitem{voss15} H.~Voss,
Variational principles for eigenvalues of nonlinear eigenproblems,
in: \textit{Numerical Mathematics and Advanced Applications -- ENUMATH 2013},
Lecture Notes in Computational Science and Engineering, vol.~103
(2015), 305--313.

\bibitem{wein}
\textsc{A.~Weinstein and W.~Stenger},
Methods of Intermediate Problems for Eigenvalues,
Academic Press, 1972.

\bibitem{WT}
\textsc{G.~Weiss and M.~Tucsnak},
How to get a conservative well-posed system out of thin air, Part I,
\textit{ESAIM Control Optim.\ Calc.\ Var.} \textbf{9} (2003), 247--274.

%\bibitem{W71}
%{\sc B.~Werner},
%Das Spektrum von Operatorscharen mit verallgemeinerten Rayleighquotienten %(German),
%\textit{Arch.\ Rational Mech.\ Anal.} \textbf{42} (1971), 223--238.

%\bibitem{Pu}
%{\sc P.\,H.~You},
%Boundary feedback control of elastic beam equation with
%structural damping and stability,
%\textit{Acta Math.\ Appl.\ Sinica (English Ser.)} \textbf{6} (1990), 373--382.


\end{thebibliography}
\end{document}